\documentclass{elsarticle}
\usepackage{amsmath,amssymb,amsthm,amsfonts}
\usepackage{enumerate,enumitem}
\usepackage{hyperref}
\usepackage{fullpage}
\usepackage{float}
\usepackage{setspace}
\usepackage{epsfig, graphicx}
\usepackage{subfig}
\usepackage{xcolor}
\usepackage{url}

\newtheorem{theorem}{Theorem}[section]
\newtheorem{lemma}[theorem]{Lemma}

\newtheorem{observation}[theorem]{Observation}

\newtheorem{corollary}[theorem]{Corollary}

\def\+{{}^+\!}
\def\eg{\widehat{g}}
\def\egp{\widehat{g}\+}
\def\ng{\widetilde{g}}
\def\ngp{\widetilde{g}\+}
\def\Forb{\text{Forb}}
\def\dc#1#2{\Delta_{#1}(#2)}
\def\Gxy{\G_{xy}}
\def\Gcxy{\G^\circ_{xy}}
\def\Cc{\C^\circ}
\def\hat{\widehat}
\def\os{\sigma}
\def\osp{\sigma\+}
\def\eh{\widehat{h}}
\def\nh{\widetilde{h}}
\def\nt{\widetilde{\theta}}
\def\Hw{\H^{\rm w}}

\def\et{\theta}
\def\eeta{\eta}

\def\df#1{{\emph{#1}}}

\def\A{\mathcal{A}}

\def\C{\mathcal{C}}
\def\E{\mathcal{E}}

\def\G{\mathcal{G}}
\def\H{\mathcal{H}}
\def\M{\mathcal{M}}

\def\P{\mathcal{P}}
\def\S{\mathcal{S}}

\def\NN{\mathbb{N}}
\def\RR{\mathbb{R}}
\def\SS{\mathbb{S}}

\newcounter{cases}
\newcounter{subcases}[cases]


\def\ss{\subseteq}
\def\sm{\setminus}
\def\into{\to}

\def\iso{\cong} 
\def\eg{\widehat{g}}
\def\ng{\widetilde{g}}


\begin{document}

\begin{frontmatter}

\title{Excluded minors for the Klein Bottle I. Low connectivity case}

\author[SFU]{Bojan Mohar\fnref{mohar,mohar2}}
\ead{mohar@sfu.ca}

\author[SFU]{Petr \v{S}koda}

\address[SFU]{Simon Fraser University\\
 Department of Mathematics\\
 8888 University Drive\\
 Burnaby, BC, Canada\\}

\fntext[mohar]{Supported in part by an NSERC Discovery Grant R611450 (Canada),
   by the Canada Research Chair program, and by the
   Research Grant J1-8130 of ARRS (Slovenia).}
\fntext[mohar2]{On leave from:
    IMFM \& FMF, Department of Mathematics, University of Ljubljana, Ljubljana,
    Slovenia.}

\begin{abstract}
Graphs that are critical (minimal excluded minors) for embeddability in surfaces are studied. In Part I we consider the structure of graphs with a 2-vertex-cut that are critical with respect to the Euler genus. A general theorem describing the building blocks is presented. These constituents, called hoppers and cascades, are classified for the case when Euler genus is small. As a consequence, the complete list of obstructions of connectivity 2 for embedding graphs into the Klein bottle is obtained. This is the first complete result about obstructions for embeddability of graphs in the Klein bottle, and the outcome is somewhat surprising in the sense that there are considerably fewer excluded minors than expected.
\end{abstract}

\begin{keyword}
Excluded minor, graphs on surfaces, graph embedding, genus, Euler genus, Klein bottle.
\end{keyword}

\end{frontmatter}



\section{Introduction}

Robertson and Seymour~\cite{robertson-1990} proved that for each surface $\SS$
the class of graphs that embed into $\SS$ can be characterized by a finite list $\Forb(\SS)$ of minimal forbidden minors (or \df{obstructions}).
For the 2-sphere $\SS_0$, $\Forb(\SS_0)$ consists of the \df{Kuratowski graphs}, $K_5$ and $K_{3,3}$ \cite{kuratowski-1930}. The list of obstructions $\Forb(\NN_1)$
for the projective plane $\NN_1$ already contains 35 graphs and $\NN_1$ is the only other surface for which the complete list of forbidden minors is known \cite{archdeacon-1981,glover-1979}.
For the torus $\SS_1$, the complete list of obstructions is still not known, but thousands of obstructions
were generated by the use of computers (see~\cite{Chambers02,NeufeldMyrwold,Wood07}).

The obstructions for the Klein bottle are even less understood than those for the torus. Even though no list of obstructions have been constructed so far,
it is expected that the total number of obstructions for the Klein bottle will be in tens of thousand. Henry Glover (private communication to B.M.) conjectured that there will be many more. In fact, Glover made a speculation that more than $10^6$ obstructions will be obtained by pasting together two obstructions for the projective plane by identifying two vertices in all possible ways. One of the side results of this paper is a refutation of this conjecture.

In this paper, we study critical graphs for Euler genus of low connectivity.
For a graph $G$, we denote by $\eg(G)$ its Euler genus; see Section \ref{sc-preliminaries} for definitions.
A graph $G$ is \df{critical} for Euler genus $k$ if $\eg(G) > k$ and for each edge $e \in E(G)$, $\eg(G-e) \le k $ and $\eg(G / e) \le k$, where $G/e$ denotes the graph obtained from $G$ by contracting the edge $e$.
Let $\E_k$ be the class of critical graphs for Euler genus $k$ and $\E = \bigcup_{k\ge 0} \E_k$.
It is easy to show that graphs in $\E$ that are not 2-connected can be obtained as disjoint unions and 1-sums of graphs in $\E$ (see~\cite{stahl-1977}).
Here we study graphs in $\E$ of \df{connectivity} 2, that is, graphs that are 2-connected but not 3-connected.
We shall show that each critical graph for Euler genus of connectivity 2 can be obtained as a 2-sum of two graphs that are close to graphs in $\E$
or belong to an exceptional class of graphs, called \df{cascades}. Using the classification of cascades from Part II of this work \cite{MS2014_II}, we construct the complete list of critical graphs for Euler genus 2, whose connectivity is 2.
In Sect.~\ref{sc-klein-klein}, we show that a graph of connectivity 2 is critical for Euler genus 2 if and only if it is an obstruction for the Klein bottle.
This yields a complete list of obstructions for the Klein bottle of connectivity 2. The list of obstructions for embeddability in the Klein bottle (and for Euler genus 2) contains precisely 668 graphs of connectivity two.
This is in a strong contrast with the afore-mentioned predictions of Henry Glover, who estimated that the number of Klein bottle obstructions of connectivity two will be more than a million.
An analogous result for the torus is given in~\cite{mohar-torus}. However, the methods used in that paper are quite different from those in this one. The main difference is the appearance of cascades, whose treatment occupies the whole Part II of this paper~\cite{MS2014_II}.

The above-mentioned result that obstructions of connectivity two for Euler genus 2 and the nonorientable genus 2 are the same is just a coincidence. It is easy to see that it no longer holds for larger genus. Also, there are 3-connected obstructions for Euler genus 2 that are not Klein bottle obstructions. One example is the following graph. Let $Q$ be the graph obtained from $K_7$ by first subdividing two of its edges that have no vertex in common and then adding an edge joining both vertices of degree two used in the subdivision. Since $K_7$ does not embed in the Klein bottle, $Q$ is not an obstruction for this surface. However, $Q$ cannot be embedded in the torus and, as the reader may verify, deleting or contracting any edge gives a graph of genus one. So, $Q$ is an obstruction for the torus and an obstruction for Euler genus 2.

In classifying obstructions of connectivity two, we encounter two special families of graphs that are the building blocks of such obstructions. The first class are mysterious graphs called {\em hoppers}. While we prove that hoppers do not exist when the genus is small, and we are not able to construct any for larger genus, we believe that they may show up when the genus is large enough. Their existence is closely related to an old open problem dating back to the 1980's asking if there exists a graph which is simultaneously an obstruction for two different nonorientable surfaces.\footnote{This problem was proposed in various incarnations by Dan Archdeacon, Bruce Richter, and Jozef \v{S}iran, and appears as Problem \#1 in the list of open problems in topological graph theory compiled by Dan Archdeacon in 1995 (\url{http://www.emba.uvm.edu/~darchdea/problems/decgenus.htm}).}
For such an obstruction, deleting or contracting any edge would reduce the nonorientable genus by at least two.

The graphs in the second family that we encounter are called {\em cascades}. In Part II of this work, we determine all cascades when the genus is small. The proofs use methods from structural graph theory and involve development of results about extensions of embeddings of subgraphs. The classification of cascades for the Klein bottle and for Euler genus 2 is the most complicated part of the paper.

In Part I, obstructions of connectivity two for arbitrary Euler genus are examined. It is shown that we encounter the same behavior as for the small genus, except that we are unable to say much about hoppers and cascades.

\section{Preliminaries}
\label{sc-preliminaries}

Let $G$ be  a connected multigraph. An \df{embedding} of $G$ is a pair $\Pi = (\pi, \lambda)$
where $\pi = (\pi_v \mid v \in V(G))$ is a \df{rotation system}, which assigns each vertex
$v$ a cyclic permutation $\pi_v$ of the edges incident with $v$, and $\lambda$ is a \df{signature}
mapping which assigns each edge $e \in E(G)$ a \df{sign} $\lambda(e) \in \{-1, 1\}$.
For an edge $e$ incident to $v$, the cyclic sequence $e, \pi_v(e), \pi_v^2(e), \ldots, e$ is called
the \df{local rotation} at $v$.
Given an embedding $\Pi$ of $G$, we say that $G$ is \df{$\Pi$-embedded}.

A \df{$\Pi$-face} of a $\Pi$-embedded graph $G$ is a cyclic sequence of triples $(v_i, e_i, s_i)$,
where $v_i \in V(G)$, $e_i$ is an edge incident with $v_i$, and $s_i\in \{1,-1\}$,
satisfying the following (with indices taken modulo $k$, where $k$ is the length of the sequence):
\begin{enumerate}[label=(\roman*)]
\item
  $e_i = v_iv_{i+1}$,
\item
  $s_{i+1} = s_i\lambda(e_i)$, and
\item
  $e_{i+1} = \pi_{v_{i+1}}^{s_{i+1}}(e_i)$.
\end{enumerate}
Two consecutive tuples $(v, e, s), (v', e', s')$ of a $\Pi$-face $W$ give a \df{$\Pi$-angle} $e, v', e'$ of $W$.
Let $F(G,\Pi)$ be the set of $\Pi$-faces.
The \df{Euler genus} of $\Pi$ is given by Euler's formula.
$$\eg(\Pi) = 2 - |V(G)| + |E(G)| - |F(G,\Pi)|.$$
The \df{Euler genus} $\eg(G)$ of a graph $G$ is the minimum Euler genus among all embeddings of $G$.

If $G$ contains a cycle that has odd number of edges of negative signature,
we say that $\Pi$ is \df{nonorientable}. Otherwise, $\Pi$ is \df{orientable}.
The \df{orientable genus} $g(G)$ is half of the minimum genus of an orientable embedding of $G$.
If $G$ contains at least one cycle, then the \df{nonorientable genus} $\ng(G)$ is the minimum
Euler genus of a nonorientable embedding of $G$, else $\ng(G) = 0$.
The following relation is an easy observation (see~\cite{mohar-book}).

\begin{lemma}
\label{lm-ng-upper}
  For every connected graph $G$ which is not a tree, $\ng(G) \le 2g(G) + 1$.
\end{lemma}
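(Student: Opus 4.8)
The plan is to take an optimal orientable embedding of $G$ and turn it into a nonorientable one by flipping the signature of a single edge lying on a cycle, an operation that can change the Euler genus by at most one.

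First I would fix an embedding $\Pi=(\pi,\lambda)$ of $G$ into the orientable surface of genus $g(G)$, so that $\eg(\Pi)=2g(G)$. Since $G$ is connected and not a tree, it contains a cycle $C$; I would pick an edge $e_0$ of $C$ and form $\Pi'=(\pi,\lambda')$ from $\Pi$ by setting $\lambda'(e_0)=-\lambda(e_0)$ and $\lambda'(e)=\lambda(e)$ for $e\neq e_0$. Because $\Pi$ is orientable, every cycle of $G$, and in particular $C$, carries an even number of edges of negative signature in $\Pi$, hence an odd number in $\Pi'$; therefore $\Pi'$ is a nonorientable embedding and $\ng(G)\le\eg(\Pi')$. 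It then remains to show $\eg(\Pi')\le\eg(\Pi)+1$, i.e. $|F(G,\Pi')|\ge|F(G,\Pi)|-1$.

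The heart of the argument is this last inequality. I would read the face-tracing rule (i)--(iii) as defining a successor permutation $\sigma$ on the finite set of triples $(v,e,s)$ with $e$ incident to $v$ and $s\in\{1,-1\}$, whose orbits encode $F(G,\Pi)$. Replacing $\lambda(e_0)$ by $-\lambda(e_0)$ alters $\sigma$ only at the (at most four) triples $(v,e_0,s)$ with $v$ an endpoint of $e_0$: the new successor permutation is obtained from $\sigma$ by post-composition with an involution supported on these triples, which is a product of at most two transpositions. Since composing a permutation with a single transposition changes its number of orbits by exactly one, the numbers of faces of $\Pi$ and of $\Pi'$ differ by at most one. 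Hence $\eg(\Pi')\le\eg(\Pi)+1=2g(G)+1$, and so $\ng(G)\le\eg(\Pi')\le 2g(G)+1$.

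The main obstacle I foresee is purely the bookkeeping in that last step: one must pin down precisely which cyclic sequences of triples are identified as a single $\Pi$-face, so that a change of at most two in the orbit count of $\sigma$ really translates to a change of at most one in $|F(G,\Pi)|$ rather than two; one should also dispatch separately the degenerate possibility that the only cycles of $G$ through a suitable $e_0$ are loops. Once the face-counting convention is nailed down, everything else is routine, and the hypothesis that $G$ is not a tree is used only to guarantee the existence of the edge $e_0$.
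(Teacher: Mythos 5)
The paper does not actually prove this lemma; it records it as ``an easy observation'' and cites the monograph of Mohar and Thomassen. Your argument is the standard combinatorial proof and is essentially correct: take an orientable embedding $\Pi$ with $\eg(\Pi)=2g(G)$, flip the signature of an edge $e_0$ lying on a cycle to obtain a nonorientable embedding $\Pi'$, and observe that the successor permutation on triples changes by pre-composition (you wrote post-composition, but it is pre-composition $\sigma'=\sigma\tau$ with $\tau$ the sign flip on the four triples carried by $e_0$; the orbit count is the same either way since $\tau\sigma$ and $\sigma\tau$ are conjugate), and $\tau$ is a product of two transpositions, so the number of $\sigma$-orbits moves by $0$ or $\pm 2$. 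The one point you should actually nail down rather than wave at is the one you flag yourself: with the convention that $F(G,\Pi)$ identifies each cyclic trail with its reversal, the number of $\sigma$-orbits equals $2|F(G,\Pi)|$, and one must rule out a $\sigma$-orbit being fixed by the reversal involution (a short dihedral-group argument shows a fixed orbit would force a loop, which is excluded for simple graphs and is easily handled separately for multigraphs). Once that is pinned down, the inequality $|F(G,\Pi')|\ge|F(G,\Pi)|-1$ and hence $\ng(G)\le\eg(\Pi')\le\eg(\Pi)+1=2g(G)+1$ follows exactly as you say.
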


If $\ng(G) = 2g(G) + 1$, then $G$ is said to be \df{orientably simple}.
Note that in this case $\eg(G) = 2g(G) = \ng(G)-1$, i.e., the Euler genus of $G$ is even.

In this paper, we will deal mainly with the class $\G$ of simple graphs.
Let $G \in \G$ be a simple graph and $e\in E(G)$.
Then $G-e$ denotes the graph obtained from $G$ by \df{deleting} $e$
and $G/e$ denotes the graph\footnote{When contracting an edge, one may obtain multiple edges.
We shall replace any multiple edges by single edges as such a simplification has no effect on the genus.}
obtained from $G$ by \df{contracting} $e$. It is convenient for us to formalize these graph operations.
The set $\M(G) = E(G) \times \{-,/\}$ is the \df{set of minor-operations} available for $G$.
An element $\mu \in \M(G)$ is called a \df{minor-operation} and $\mu G$ denotes the graph obtained
from $G$ by applying $\mu$. For example, if $\mu = (e, -)$ then $\mu G  = G - e$.
A graph $H$ is a \df{minor} of $G$ if $H$ can be obtained from a subgraph of $G$ by contracting some edges.
If $G$ is connected, then $H$ can be obtained from $G$ by a sequence of minor-operations.

We shall use the following well-known result.

\begin{theorem}[Stahl and Beineke~\cite{stahl-1977}]
\label{th-stahl-euler}
The Euler genus of a graph is the sum of the Euler genera of its blocks.
\end{theorem}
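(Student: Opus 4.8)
The plan is to prove the two inequalities $\eg(G) \le \sum_i \eg(B_i)$ and $\eg(G) \ge \sum_i \eg(B_i)$ separately, where $B_1,\dots,B_m$ are the blocks of $G$. Since Euler genus is additive over connected components (the faces of a disjoint union are the disjoint union of the faces, and Euler's formula splits accordingly), it suffices to treat a connected graph $G$ whose block tree is nontrivial; then by induction on the number of blocks it is enough to handle a single cut vertex, i.e.\ $G = G_1 \cup G_2$ with $V(G_1) \cap V(G_2) = \{v\}$ a single vertex, and to show $\eg(G) = \eg(G_1) + \eg(G_2)$.

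\emph{Upper bound.} Given optimal embeddings $\Pi_1$ of $G_1$ and $\Pi_2$ of $G_2$, I would build an embedding $\Pi$ of $G$ by taking the disjoint union of the rotation systems and signatures, and at the shared vertex $v$ splicing the two local rotations $\pi^1_v$ and $\pi^2_v$ into one cyclic permutation: cut each cyclic rotation at one point and concatenate. The key bookkeeping fact is that this splicing merges exactly one $\Pi_1$-face incident with $v$ with one $\Pi_2$-face incident with $v$ into a single $\Pi$-face, and leaves all other faces unchanged. Hence $|F(G,\Pi)| = |F(G_1,\Pi_1)| + |F(G_2,\Pi_2)| - 1$, while $|V(G)| = |V(G_1)| + |V(G_2)| - 1$ and $|E(G)| = |E(G_1)| + |E(G_2)|$; plugging into the formula $\eg(\Pi) = 2 - |V| + |E| - |F|$ gives $\eg(\Pi) = \eg(\Pi_1) + \eg(\Pi_2)$, so $\eg(G) \le \eg(G_1) + \eg(G_2)$.

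\emph{Lower bound.} Conversely, take an optimal embedding $\Pi$ of $G$. The restriction of the rotation system and signature to $E(G_i)$ gives an embedding $\Pi_i$ of $G_i$; I claim $|F(G,\Pi)| \le |F(G_1,\Pi_1)| + |F(G_2,\Pi_2)| - 1$. Every $\Pi$-face is obtained by traversing angles; at $v$ a $\Pi$-face may jump between angles belonging to $G_1$ and angles belonging to $G_2$, so one $\Pi$-face can be the union of several $\Pi_i$-face pieces. Splitting each $\Pi$-face at $v$ into its maximal $G_1$- and $G_2$-segments, one checks that the segments glue up into exactly the faces of $\Pi_1$ and $\Pi_2$; if a $\Pi$-face is split into $k$ pieces at $v$ then it contributes $k$ to the right side but only $1$ to the left, and since $G$ is connected at least one face must be split (the "outer" interaction through $v$), giving the claimed inequality. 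Then the same Euler-formula computation yields $\eg(G) \ge \eg(\Pi_1) + \eg(\Pi_2) \ge \eg(G_1) + \eg(G_2)$, completing the proof.

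\emph{Main obstacle.} The routine-looking but genuinely delicate step is the face-counting in both directions: one must track carefully how face-tracing triples $(v_i,e_i,s_i)$ behave as they pass through the cut vertex $v$, in particular verifying that the signature-dependent rule $e_{i+1} = \pi_{v_{i+1}}^{s_{i+1}}(e_i)$ interacts correctly with the splicing (so that no unexpected merging or splitting of faces occurs away from $v$), and handling the bookkeeping when $v$ has high degree in both $G_1$ and $G_2$ and a single face visits $v$ multiple times. Orientability plays no role here, since the argument is purely combinatorial in the rotation–signature model; the one point to note is that signs can be renormalized freely at $v$ by switching, which I would use to make the splicing clean.
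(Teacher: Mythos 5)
The paper does not prove this theorem; it cites it from Stahl and Beineke, so I can only assess your argument on its own terms. Your upper bound (splice the two local rotations at $v$; exactly one face of $\Pi_1$ and one face of $\Pi_2$ merge, giving $\eg(G)\le\eg(G_1)+\eg(G_2)$) is the standard connected-sum construction and is essentially fine, modulo the signature bookkeeping you acknowledge.

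The lower bound, however, has a genuine gap. Your key counting claim --- that if a $\Pi$-face is split at $v$ into $k$ maximal $G_1$- and $G_2$-segments then it ``contributes $k$ to the right side,'' yielding $|F(G,\Pi)|\le|F(G_1,\Pi_1)|+|F(G_2,\Pi_2)|-1$ --- is false. The segments reassemble into the faces of $\Pi_1$ and $\Pi_2$ many-to-one, and segments coming from \emph{different} $\Pi$-faces can land in the \emph{same} $\Pi_i$-face. Concretely, let $G_1$ consist of three parallel edges $x_1,x_2,x_3$ from $v$ to $u$ and $G_2$ of three parallel edges $y_1,y_2,y_3$ from $v$ to $w$ (subdivide if simple graphs are insisted upon; this changes nothing). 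Take all signs positive, $\pi_u=(x_1,x_2,x_3)$, $\pi_w=(y_1,y_2,y_3)$, and the interleaved rotation $\pi_v=(x_1,y_1,x_2,y_2,x_3,y_3)$. Tracing faces, $\Pi$ has exactly three faces, each of length four and each splitting at $v$ into one $G_1$-segment and one $G_2$-segment; but the three $G_1$-segments concatenate into a \emph{single} face of $\Pi_1$, and likewise for $\Pi_2$. So $|F(G,\Pi)|=3$ while $|F_1|+|F_2|-1=1$, and correspondingly $\eg(\Pi)=2<4=\eg(\Pi_1)+\eg(\Pi_2)$.

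This $\Pi$ is of course not a minimum-genus embedding of $G$ (here $\eg(G)=0$), but your argument nowhere uses minimality: it is a purely local counting claim about how segments regroup, and that claim is refuted by the example. For an optimal $\Pi$ the inequality $|F|\le|F_1|+|F_2|-1$ is true, but it is exactly equivalent to the theorem being proved, so it cannot serve as a lemma established by inspection. This is precisely why the lower bound is the hard half of the Battle--Harary--Kodama--Youngs and Stahl--Beineke additivity theorems. A correct proof needs a surgery or normalization step: for instance, show that a minimum-genus embedding of $G$ can be modified, without increasing the number of faces being lost (i.e., without increasing Euler genus), so that the $G_1$-ends and the $G_2$-ends form two contiguous arcs in the rotation at $v$; equivalently, argue by induction on the number of alternations between $G_1$- and $G_2$-edges around $v$, removing an alternation by a genus-nonincreasing local surgery. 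Only after such a normalization does your restrict-and-count computation go through.
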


Generally, we are interested in minor-minimal graphs (with some property). The closely related classes
of deletion-minimal graphs appear naturally. For a surface $\SS$, let $\Forb^*(\SS)$ be the class of graphs of minimum degree at least 3 that do not embed into $\SS$ but are minimal such with respect to taking subgraphs. Similarly, let $\E^*_k$ be the class of graphs of minimum degree at least 3 such that $\eg(G) > k$ but $\eg(G - e) \le k$ for each edge $e \in E(G)$. Again, we let $\E^* = \bigcup_{k\ge 0} \E^*_k$.

\section{Graphs with terminals}
\label{sc-terminals}

We study the class $\Gxy$ of graphs with two special vertices $x$ and $y$, called \df{terminals}.
Most notions that are used for graphs can be used in the same way for graphs with terminals.
Some notions differ though and, to distinguish between graphs with and without terminals, let $\hat{G}$ be the underlying graph of $G$ without terminals (for $G \in \Gxy$).
Two graphs, $G_1$ and $G_2$, in $\Gxy$ are \df{isomorphic}, also denoted $G_1 \iso G_2$, if there is an isomorphism of the graphs $\hat{G_1}$ and $\hat{G_2}$
that maps terminals of $G_1$ onto terminals of $G_2$ (and non-terminals onto non-terminals), possibly exchanging $x$ and $y$.
We define minor-operations on graphs in $\Gxy$ in the way that $\Gxy$ is a minor-closed class.
When performing edge contractions on $G \in \Gxy$, we do not allow contraction of the edge $xy$ (if $xy \in E(G)$)
and when contracting an edge incident with a terminal, the resulting vertex becomes a terminal.

We use $\M(G)$ to denote the set of available minor-operations for $G$.
Since $(xy, /) \not\in \M(G)$ for $G \in \Gxy$,
we shall use $G /xy$ to denote the underlying simple graph in $\G$ obtained from $G$
by identification of $x$ and $y$. In particular, we do not require the edge $xy$ to be present in~$G$.

A \df{graph parameter} is a function $\G \into \RR$ that is constant on each isomorphism class of $\G$.
Similarly, we call a function $\Gxy \into \RR$ a \df{graph parameter} if it is constant on each isomorphism class of $\Gxy$.
A graph parameter $\P$ is \df{minor-monotone} if $\P(H) \le \P(G)$ for each graph $G \in \Gxy$ and each minor $H$ of $G$.
The Euler genus is an example of a minor-monotone graph parameter.

For $G \in \Gxy$, the graph $G\+$ is the graph obtained from $G$ by adding the edge $xy$ if it is not already present.
We can view the Euler genus of $G\+$ as a graph parameter $\egp$ of $G$, $\egp(G) = \eg(G\+)$.
Note that $\egp$ is minor-monotone.
The difference of $\egp$ and $\eg$ is a parameter $\et$, that is, $\et(G) = \eg(G\+) - \eg(G)$.
Note that $\et(G) \in \{0,1,2\}$.

\begin{table}
  \centering
  \begin{tabular}{| c | l | l |}
    \hline
    Parameter & Definition & Range \\
    \hline
    $\eg(G)$ & Euler genus & $\ge 0$ \\
    $\egp(G)$ & $\eg(G+xy)$ & $\ge 0$ \\
    $\et(G)$ & $\egp(G)-\eg(G)$ & $0,1,2$ \\
    $\eeta(G_1, G_2)$ & $\et(G_1) + \et(G_2)$ & $0,1,2,3,4$ \\
    \hline
  \end{tabular}
  \caption{Genus parameters for graphs in $\Gxy$}
  \label{tb-ranges}
\end{table}

Let $\P$ be a graph parameter.
A graph $G$ is \df{$\P$-critical} if $\P(\mu G) < \P(G)$ for each $\mu \in \M(G)$.
Let $H$ be a subgraph of a graph $G$ (possibly with terminals) and $\P$ a graph parameter. We say that $H$ is \df{$\P$-tight} if
$\P(\mu G) < \P(G)$ for every minor-operation $\mu \in \M(H)$.
We observe that $\P$-critical graphs have $\P$-tight subgraphs:

\begin{lemma}
\label{lm-tight}
Let $H_1, \ldots, H_s$ be subgraphs of a graph $G$ (possibly with terminals).
If $E(H_1) \cup \cdots \cup E(H_s) = E(G)$, then
$G$ is $\P$-critical if and only if $H_1, \ldots, H_s$ are $\P$-tight.
\end{lemma}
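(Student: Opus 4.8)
The plan is to recognize that this statement is, at bottom, a bookkeeping identity about the sets of available minor-operations, followed by a routine unwinding of the two definitions involved.

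First I would record the key set-theoretic observation. For any subgraph $H$ of $G$, the set of available minor-operations $\M(H)$ is $E(H)\times\{-,/\}$ in the terminal-free case, and in the terminal case ($G\in\Gxy$) it is $E(H)\times\{-\}$ together with $(E(H)\setminus\{xy\})\times\{/\}$, since $(xy,/)$ is forbidden. In either case $\M(H)\subseteq\M(G)$, and the exclusion of the operation $(xy,/)$ is made in exactly the same way for $G$ and for every $H_i$. Consequently, the hypothesis $E(H_1)\cup\cdots\cup E(H_s)=E(G)$ yields immediately
$$\M(H_1)\cup\cdots\cup\M(H_s)=\M(G).$$

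Next I would unwind the definitions. By definition, $G$ is $\P$-critical precisely when $\P(\mu G)<\P(G)$ for every $\mu\in\M(G)$. Using the displayed identity, this condition, quantified over $\M(G)$, is equivalent to the conjunction over $i=1,\dots,s$ of the condition that $\P(\mu G)<\P(G)$ for every $\mu\in\M(H_i)$; and by definition the $i$-th conjunct is exactly the assertion that $H_i$ is $\P$-tight. Hence $G$ is $\P$-critical if and only if each of $H_1,\dots,H_s$ is $\P$-tight, which is the claim. (Note that the other implication needs no extra argument: $\P$-criticality of $G$ directly gives $\P(\mu G)<\P(G)$ for every $\mu\in\M(H_i)\subseteq\M(G)$.)

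I do not expect a genuine obstacle here. The only points that require care are, first, that $\P$-tightness of $H_i$ is defined by applying the operations $\mu\in\M(H_i)$ to the ambient graph $G$ rather than to $H_i$ itself, so that no hypothesis on how the $H_i$ are embedded in $G$ beyond covering all of $E(G)$ is needed; and second, the slightly delicate treatment of the forbidden contraction $(xy,/)$ in the terminal setting, which I would handle simply by pointing out that this operation is absent from both sides of the displayed identity.
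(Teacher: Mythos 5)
Your argument is correct, and it is the natural (essentially the only) one: the paper states this lemma as an observation without proof, and your unwinding — reducing the claim to the identity $\M(H_1)\cup\cdots\cup\M(H_s)=\M(G)$ and then comparing the two universally quantified definitions — is exactly what is implicit there. Your care about the forbidden contraction $(xy,/)$ and about the fact that tightness of $H_i$ applies $\mu$ to the ambient graph $G$ addresses the only points where one could slip.
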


Let $\Gcxy$ be the subclass of $\Gxy$ that consists of graphs that do not contain the edge $xy$.
For graphs $G_1, G_2 \in \Gxy$ such that $V(G_1) \cap V(G_2) = \{x,y\}$, the graph $G = (V(G_1) \cup V(G_2), E(G_1) \cup E(G_2))$ is the \df{$xy$-sum} of $G_1$ and $G_2$.
The graphs $G_1$ and $G_2$ are called \df{parts} of $G$.
Let $G$ be the $xy$-sum of $G_1, G_2 \in \Gxy$.
We define the following two parameters:
\begin{equation}
  \label{eq-eg-0}
  \eh_0(G) = \eg(G_1) + \eg(G_2) + 2;
\end{equation}
\begin{equation}
  \label{eq-eg-1}
  \eh_1(G) = \egp(G_1) + \egp(G_2).
\end{equation}

Eq.~(\ref{eq-eg-1}) can be rewritten in a form similar to Eq.~(\ref{eq-eg-0}).
\begin{equation}
  \label{eq-thetas}
  \eh_1(G) = \eg(G_1) + \eg(G_2) + \et(G_1) + \et(G_2) = \eg(G_1) + \eg(G_2) + \eeta(G_1, G_2)
\end{equation}
where $\eeta(G_1, G_2) = \et(G_1) + \et(G_2)$.
Note that $\eeta(G_1, G_2) \in \{0,1,2,3,4\}$.

Richter~\cite{richter-1987-euler} gave a precise formula for the Euler genus of a 2-sum that
can be expressed using our notation as follows.

\begin{theorem}[Richter~\cite{richter-1987-euler}]
  \label{th-richter-euler}
  Let $G$ be the $xy$-sum of connected graphs $G_1, G_2 \in \Gxy$. Then
  \begin{enumerate}[label=\rm(\roman*)]
  \item
    $\eg(G) = \min \{\eh_0(G), \eh_1(G)\}$,
  \item
    $\egp(G) = \eh_1(G)$, and
  \item
    $\et(G) = \max\{\eh_1(G) - \eh_0(G), 0\}.$
  \end{enumerate}
\end{theorem}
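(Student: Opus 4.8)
The statement I would aim for is Richter's formula for the Euler genus of an $xy$-sum. The plan is to prove part (i) directly, and then derive parts (ii) and (iii) from it together with the special structure of the edge $xy$.

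\emph{The two inequalities giving (i).} First I would establish $\eg(G) \le \min\{\eh_0(G), \eh_1(G)\}$ by exhibiting embeddings of $G$ of each genus. For the bound $\eg(G) \le \eh_1(G)$: take optimal embeddings $\Pi_1$ of $G_1\+$ and $\Pi_2$ of $G_2\+$; both contain the edge $xy$, so each has a face incident with $xy$ (in fact $xy$ lies on two faces in each). Cut both embeddings open along $xy$ and glue the two surfaces along the resulting boundary arcs, identifying the two copies of $x$ and of $y$; deleting the now-redundant copy of $xy$ yields an embedding of $G$. A routine Euler-characteristic bookkeeping — we lose the edge $xy$ once, merge a pair of faces, and identify the two terminal vertices — gives Euler genus exactly $\egp(G_1)+\egp(G_2) = \eh_1(G)$. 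For the bound $\eg(G) \le \eh_0(G)$: take optimal embeddings of $G_1$ and $G_2$ (now \emph{without} the edge $xy$), pick a face of each incident with $x$ and a face incident with $y$, and form a connected sum of the two surfaces twice — once near $x$, once near $y$ — i.e. add two tubes. Each tube raises the Euler genus by $2$ in total over the two summands when done naively, but the standard trick (one tube plus a handle/crosscap accounting) yields $\eg(G_1)+\eg(G_2)+2$; this is exactly the classical ``2-amalgamation adds $2$ to the Euler genus'' construction, and I would cite or reproduce it. Hence $\eg(G) \le \min\{\eh_0,\eh_1\}$.

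\emph{The lower bound.} This is the crux. Given an optimal embedding $\Pi$ of $G$, I must show $\eg(\Pi) \ge \min\{\eh_0(G), \eh_1(G)\}$. Let $\{x,y\}$ separate $G$ into $G_1, G_2$. Restrict $\Pi$ to $\hat G_1$ to get an embedding $\Pi_1$ of $G_1$ into some surface $\SS_1$ (obtained by cutting $\SS$ along a suitable closed curve or arc system through $x$ and $y$), and similarly $\Pi_2$ of $G_2$. The key dichotomy: either the restricted embeddings $\Pi_i$ can each be ``completed'' by the edge $xy$ lying inside the part it came from (in which case $\eg(\Pi) \ge \egp(G_1) + \egp(G_2) = \eh_1(G)$, because the surface must carry enough genus for both completed parts), or the interface between $G_1$ and $G_2$ in $\SS$ is ``twisted'' in a way that forces at least two extra units of Euler genus beyond $\eg(G_1)+\eg(G_2)$, giving $\eg(\Pi) \ge \eh_0(G)$. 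Making this dichotomy precise is the main obstacle: I expect to argue via the faces of $\Pi$ that meet both $G_1$ and $G_2$. If $x$ and $y$ lie on a common face of $\Pi$, one can reroute a curve through that face to separate the surface into two pieces carrying $G_1+xy$ and $G_2+xy$ respectively (after adding $xy$ along the face), yielding the $\eh_1$ bound; if not, then cutting along the ``bridge'' structure connecting $G_1$ to $G_2$ necessarily consumes Euler genus $\ge 2$, yielding the $\eh_0$ bound. A clean way to organize this is to contract $G_2$ to a near-trivial gadget (a single $xy$-edge, or an $xy$-path of length 2) and analyze the few resulting cases, using minor-monotonicity of $\egp$ and of $\eg$.

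\emph{Parts (ii) and (iii).} For (ii), note $G\+$ is the $xy$-sum of $G_1\+$ and $G_2\+$ (one of them now containing $xy$ as a parallel edge, which we simplify). Applying part (i) to this sum, the ``$\eh_0$'' term becomes $\eg(G_1\+) + \eg(G_2\+) + 2 = \egp(G_1) + \egp(G_2) + 2$, while the ``$\eh_1$'' term is $\egp(G_1\+) + \egp(G_2\+) = \egp(G_1) + \egp(G_2)$ since adding a parallel edge does not change $\egp$. The minimum is therefore the $\eh_1$ term, giving $\egp(G) = \egp(G_1) + \egp(G_2) = \eh_1(G)$, which is what (ii) asserts. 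Part (iii) is then immediate arithmetic: $\et(G) = \egp(G) - \eg(G) = \eh_1(G) - \min\{\eh_0(G), \eh_1(G)\} = \max\{\eh_1(G) - \eh_0(G), 0\}$, using (i) and (ii). So once (i) is secured, (ii) and (iii) cost essentially nothing; the entire difficulty is concentrated in the lower bound of (i), and specifically in showing that a ``non-$xy$-facial'' embedding of the sum is forced to pay two units of Euler genus.
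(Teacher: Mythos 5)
The paper does not prove this theorem: it is stated with a citation to Richter~\cite{richter-1987-euler}, and the proof is external. So there is no ``paper's proof'' to compare against, and the question is simply whether your proposal would constitute a complete argument. Your derivation of (ii) and (iii) from (i) is correct and tidy: viewing $G\+$ as the $xy$-sum of $G_1\+$ and $G_2\+$ and applying (i), the $\eh_1$ term $\egp(G_1\+)+\egp(G_2\+)=\egp(G_1)+\egp(G_2)$ is dominated by the $\eh_0$ term $\egp(G_1)+\egp(G_2)+2$, giving (ii); then (iii) is pure arithmetic. Your constructions for the two upper bounds on $\eg(G)$ in (i) are also essentially right, though you should say more carefully which faces the surgery takes place in and verify the Euler-characteristic count (for the $\eh_1$ construction, removing an open regular neighbourhood of the arc $xy$ from each closed surface drops each $\chi$ by $1$, and the boundary gluing adds nothing, so $\chi=\chi(\SS_1)+\chi(\SS_2)-2$, i.e.\ Euler genus $\egp(G_1)+\egp(G_2)$).

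The lower bound in (i) is not proved, and the gap is real, not merely unfilled detail. You write that restricting an optimal embedding $\Pi$ of $G$ to $G_1$ gives an embedding ``into some surface $\SS_1$ obtained by cutting $\SS$ along a suitable closed curve or arc system through $x$ and $y$'' --- but the existence of such a curve system with the right properties is precisely what needs to be proved, and it is the hardest step. The dichotomy you state (``either both restrictions can be completed by $xy$, or the interface is twisted'') is not exhaustive as stated and the two alternatives are not clearly defined in terms of $\Pi$. In particular, the claim that ``if $x$ and $y$ lie on a common face of $\Pi$ one can reroute a curve through that face to separate the surface into two pieces carrying $G_1+xy$ and $G_2+xy$'' is false in general: an arc through a face joining $x$ to $y$ together with the edge $xy$ need not be a separating curve, and even when it is, a face of $\Pi$ may be incident alternately with $G_1$ and $G_2$ so that the two sides of the cut do not carry $G_1$ and $G_2$ respectively. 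The complementary claim, that otherwise the ``bridge structure'' forces two extra units of Euler genus, is asserted without any mechanism. Your suggestion of contracting $G_2$ to a small gadget and analysing cases is closer to a workable strategy, but it requires establishing a reduction lemma (roughly: that the worst case for the interface is already witnessed by a bounded-size $G_2$) that is itself the content of the theorem. In short: parts (ii) and (iii) are fine, the upper bounds in (i) are fine, but the lower bound in (i) --- which you correctly identify as the crux --- is missing, and the sketch you give does not point to a way of filling it without essentially reproving the Decker--Glover--Huneke/Richter 2-amalgamation machinery.
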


We can rewrite (i) as
\begin{equation}
  \label{eq-eta-eg}
  \eg(G) = \eg(G_1) + \eg(G_2) + \min\{\eeta(G_1, G_2), 2\}
\end{equation}
and as
\begin{equation}
  \label{eq-eta-egp}
  \eg(G) = \egp(G_1) + \egp(G_2) + 2 - \max\{\eeta(G_1, G_2), 2\}.
\end{equation}

For a graph parameter $\P$, we say that a minor-operation $\mu \in \M(G)$ \df{decreases} $\P$ by at least $k$
if $\P(\mu G) \le \P(G) - k$. The subset of $\M(G)$ that decreases $\P$ by at least $k$ is denoted by $\dc k {\P,G}$.
We write just $\dc k \P$  when the graph is clear from the context.
Note that a graph $G$ is $\P$-critical precisely when $\M(G) = \dc1 \P$.
The following observation is stated for later reference.

\begin{lemma}
\label{lm-two-separated}
  Let $G \in \Gxy$. Then $\eg(G) \le \egp(G) \le \eg(G) + 2$. Furthermore, for $\et = \et(G)$ and $k \ge 0$, we have
  \begin{enumerate}
  \setlength{\itemindent}{3mm}
  \item[\rm(S1)]
    $\dc {k+2-\et} \eg \ss \dc k \egp$ and
  \item[\rm(S2)]
    $\dc {k+\et} \egp \ss \dc k \eg$.
  \end{enumerate}
\end{lemma}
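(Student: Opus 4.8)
The plan is to derive both (S1) and (S2) from the single chain of inequalities $\eg(H) \le \egp(H) \le \eg(H) + 2$, applied to $H = \mu G$ (this is the first assertion of the lemma, which I prove below), together with the defining identity $\egp(G) = \eg(G) + \et(G)$.

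First I would prove the chain for an arbitrary $H \in \Gxy$. The inequality $\eg(H) \le \egp(H)$ is immediate, since $H$ is a subgraph, hence a minor, of $H\+$, and $\eg$ is minor-monotone. For $\egp(H) \le \eg(H) + 2$, I would take an optimal embedding $\Pi$ of $H$ (so $\eg(\Pi) = \eg(H)$) and insert the edge $xy$: one places $xy$ into the local rotations at $x$ and at $y$ and picks a signature for it. Inserting one edge into an embedded graph changes the number of faces by at most one (it either merges two faces into one or splits one face into two), so the resulting embedding $\Pi'$ of $H\+$ satisfies $|F(H\+,\Pi')| \ge |F(H,\Pi)| - 1$, and Euler's formula gives $\eg(\Pi') \le \eg(\Pi) + 2 = \eg(H) + 2$; hence $\egp(H) \le \eg(H) + 2$. (Topologically this is the familiar move of attaching a handle between a face meeting $x$ and a face meeting $y$ and routing $xy$ through it; for connected $H$ it also drops straight out of Theorem \ref{th-richter-euler} on viewing $H\+$ as the $xy$-sum of $H$ with the single edge $xy$.)

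With the chain in hand, the two inclusions are pure arithmetic. Fix $\mu \in \M(G)$ and set $\et = \et(G)$, so $\egp(G) = \eg(G) + \et$; note that $\mu G \in \Gxy$ because $\Gxy$ is minor-closed, so the chain applies to $H = \mu G$. For (S2): if $\mu \in \dc{k+\et}\egp$ then $\egp(\mu G) \le \egp(G) - k - \et$, so $\eg(\mu G) \le \egp(\mu G) \le \egp(G) - k - \et = \eg(G) - k$, i.e. $\mu \in \dc k \eg$. For (S1): if $\mu \in \dc{k+2-\et}\eg$ then $\eg(\mu G) \le \eg(G) - (k+2-\et)$, so $\egp(\mu G) \le \eg(\mu G) + 2 \le \eg(G) - k + \et = \egp(G) - k$, i.e. $\mu \in \dc k \egp$.

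There is no genuinely delicate point. The only step needing an argument rather than a definition is $\egp(H) \le \eg(H) + 2$ — that adjoining one edge raises the Euler genus by at most two — which the face-count (or handle) argument above settles; everything else is monotonicity of $\eg$ and $\egp$ plus the identity $\egp = \eg + \et$.
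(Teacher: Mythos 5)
Your proof is correct and follows essentially the same route as the paper's: both derive (S1) and (S2) by the same two-line arithmetic from the chain $\eg(\mu G) \le \egp(\mu G) \le \eg(\mu G) + 2$ together with $\egp(G) = \eg(G) + \et$. The only difference is that you supply an explicit face-counting argument for the upper bound of the chain, which the paper simply takes as known (it is the earlier remark that $\et \in \{0,1,2\}$).
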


\begin{proof}
Suppose that $\mu \in \dc {k+2-\et} \eg$, i.e., $\eg(G)\ge \eg(\mu G)+k+2-\et$.
Then $\egp(G) = \eg(G)+\et \ge \eg(\mu G)+k+2 \ge \egp(\mu G) + k$. This shows that $\mu\in \dc {k} \egp$ and proves (S1). Property (S2) is verified in the same way.
\end{proof}

As an example, take a graph $G$ with $\eg(G) = 1$ and $\egp(G) = 2$. Then (S2) for $k = 1$ says that
$\dc2 \egp \ss \dc1 \eg$, or that each minor-operation that decreases the Euler genus of $G\+$ by at least 2 also decreases
the Euler genus of $G$ by at least $1$.

The next lemma describes when a minor-operation in a part of a 2-sum decreases $\eg$ of the 2-sum.

\begin{lemma}
  \label{lm-part-eg}
  Let $G$ be the $xy$-sum of connected graphs $G_1, G_2 \in \Gcxy$ and let $\mu \in \M(G_1)$ be a minor-operation such that $\mu G_1$ is connected.
  Then $\eg(\mu G) < \eg(G)$ if and only if the following is true (where $\dc k\cdot$ always refer to the decrease of the parameter in $G_1$):
  \begin{enumerate}[label=\rm(\roman*)]
  \item
    If $\eeta(G_1, G_2) = 0$, then $\mu \in \dc1 \egp$.
  \item
    If $\eeta(G_1, G_2) = 1$, then $\mu \in \dc1 \egp \cup \dc2 \eg$.
  \item
    If $\eeta(G_1, G_2) = 2$, then $\mu \in \dc1 \egp \cup \dc1 \eg$.
  \item
    If $\eeta(G_1, G_2) = 3$, then $\mu \in \dc2 \egp \cup \dc1 \eg$.
  \item
    If $\eeta(G_1, G_2) = 4$, then $\mu \in \dc1 \eg$.
  \end{enumerate}
\end{lemma}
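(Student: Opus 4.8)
The plan is to invoke Richter's formula (Theorem~\ref{th-richter-euler}) for both $G$ and $\mu G$ and then compare the two expressions arithmetically. First I would check that $\mu G$ really is an $xy$-sum of the expected pieces: since $\mu$ is a minor-operation of $G_1$ and $G_1,G_2$ meet only in $\{x,y\}$, performing $\mu$ leaves the $G_2$-part of $G$ intact, so $\mu G$ is the $xy$-sum of $\mu G_1$ and $G_2$. (If $\mu$ contracts an edge incident with a terminal it may create the edge $xy$ inside $\mu G_1$, so $\mu G_1$ need not belong to $\Gcxy$; this is harmless, since Theorem~\ref{th-richter-euler} is stated for all of $\Gxy$, and then simply $\egp(\mu G_1)=\eg(\mu G_1)$, while no parallel $xy$-edge appears because $G_2\in\Gcxy$.) Both $\mu G_1$ and $G_2$ are connected --- the former by hypothesis, the latter since $G_2$ is --- so Theorem~\ref{th-richter-euler}(i) applies to $\mu G$ as well as to $G$.

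Next I would introduce the two decreases occurring in $G_1$, namely
\[
d_0=\eg(G_1)-\eg(\mu G_1),\qquad d_1=\egp(G_1)-\egp(\mu G_1).
\]
Both are nonnegative by minor-monotonicity of $\eg$ and $\egp$, and $\mu\in\dc{k}{\eg}$ (resp.\ $\mu\in\dc{k}{\egp}$) in $G_1$ if and only if $d_0\ge k$ (resp.\ $d_1\ge k$). Writing $\eeta=\eeta(G_1,G_2)$ and using $\eh_0(G)=\eg(G_1)+\eg(G_2)+2$ together with $\eh_1(G)=\eg(G_1)+\eg(G_2)+\eeta$ (Eq.~(\ref{eq-thetas})), the definitions (\ref{eq-eg-0})--(\ref{eq-eg-1}) applied to $\mu G$ give $\eh_0(\mu G)=\eh_0(G)-d_0$ and $\eh_1(\mu G)=\eh_1(G)-d_1$. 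Hence, by Theorem~\ref{th-richter-euler}(i), after cancelling the common summand $\eg(G_1)+\eg(G_2)$, the inequality $\eg(\mu G)<\eg(G)$ is equivalent to
\[
\min\{\,2-d_0,\ \eeta-d_1\,\}<\min\{\,2,\ \eeta\,\}.
\]
I would also record the identity $d_1-d_0=\et(G_1)-\et(\mu G_1)$ (apply $\egp=\eg+\et$ to $G_1$ and to $\mu G_1$), so that $|d_1-d_0|\le2$; this is just Lemma~\ref{lm-two-separated} in the present notation.

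It then remains to unwind the displayed inequality in each of the five cases $\eeta\in\{0,1,2,3,4\}$. For $\eeta=1,2,3$ the right-hand side equals $1,2,2$ and the inequality reads off at once as $\mu\in\dc2{\eg}\cup\dc1{\egp}$, $\mu\in\dc1{\eg}\cup\dc1{\egp}$, and $\mu\in\dc1{\eg}\cup\dc2{\egp}$, respectively. The two boundary cases are where the bit of care is needed. For $\eeta=0$ the inequality is $\min\{2-d_0,-d_1\}<0$, which a priori holds when $d_1\ge1$ \emph{or} $d_0\ge3$; but $\eeta=0$ forces $\et(G_1)=0$, whence $d_0\le d_1+2$ (equivalently, (S1) with $k=1$), so $d_0\ge3$ already implies $d_1\ge1$ and the condition collapses to $\mu\in\dc1{\egp}$. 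Symmetrically, for $\eeta=4$ we have $\et(G_1)=2$, hence $d_1\le d_0+2$ (equivalently, (S2) with $k=1$), the disjunct $d_1\ge3$ is subsumed by $d_0\ge1$, and the condition collapses to $\mu\in\dc1{\eg}$. The only real obstacle I anticipate is keeping the bookkeeping straight: being careful that $\mu G$ is still the $xy$-sum of $\mu G_1$ and $G_2$ even when an $xy$-edge appears, and noticing that the apparent extra disjuncts in the cases $\eeta=0$ and $\eeta=4$ are exactly the redundancies supplied by Lemma~\ref{lm-two-separated}.
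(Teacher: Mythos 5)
Your proof is correct and follows essentially the same route as the paper's: both apply Richter's formula to $G$ and $\mu G$, case-split on $\eeta(G_1,G_2)$, and use Lemma~\ref{lm-two-separated} to eliminate the redundant disjuncts in the boundary cases $\eeta\in\{0,4\}$. Your reformulation as the single inequality $\min\{2-d_0,\,\eeta-d_1\}<\min\{2,\,\eeta\}$ tightens the exposition into one equivalence rather than two directions, and your explicit remark that $\mu G_1$ may acquire the edge $xy$ and leave $\Gcxy$ --- harmlessly, since Theorem~\ref{th-richter-euler} is stated for $\Gxy$ --- addresses a point the paper leaves implicit.
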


\begin{proof}
  Assume first that $\eg(\mu G) < \eg(G)$.
  Suppose that $\eeta(G_1, G_2) \le 2$ and that $\mu \not\in \dc1 \egp$.
  Since $\mu G_1$ is connected and $\eh_1(\mu G) = \eh_1(G)$, Theorem~\ref{th-richter-euler} gives that $\eg(\mu G) = \eh_0(G)$.
  Thus using Eq.~(\ref{eq-eta-eg}), we obtain that
  $$\eg(\mu G_1) + \eg(G_2) + 2 = \eh_0(\mu G) = \eg(\mu G) < \eg(G) \le \eg(G_1) + \eg(G_2) + \eeta(G_1, G_2).$$
  If $\eeta(G_1, G_2) = 0$, then $\eg(\mu G_1) < \eg(G_1) - 2$. Thus $\mu \in \dc3 \eg$ and, since  $\dc3 \eg \ss \dc1 \egp$ by (S1), $\mu \in \dc1 \egp$, a contradiction.
  We conclude that (i) holds.
  If $\eeta(G_1, G_2) = 1$, then $\mu \in \dc2 \eg$ and (ii) holds.
  If $\eeta(G_1, G_2) = 2$, then $\mu \in \dc1 \eg$ and (iii) holds.

  Assume now that $\eeta(G_1, G_2) \ge 3$ and assume that $\mu \not\in \dc1 \eg$. Then $\eh_0(\mu G) = \eh_0(G)$.
  Consequently, by Theorem~\ref{th-richter-euler}, $\eg(\mu G) = \eh_1(G)$. Thus using Eq.~(\ref{eq-eta-egp}), we have
  $$\egp(\mu G_1) + \egp(G_2) = \eh_1(\mu G) = \eg(\mu G) < \eg(G) \le \egp(G_1) + \egp(G_2) + 2 - \eeta(G_1, G_2).$$
  If $\eeta(G_1, G_2) = 3$, then $\egp(\mu G_1) < \egp(G_1) - 1$. Hence $\mu \in \dc2 \egp$ and (iv) holds.
  If $\eeta(G_1, G_2) = 4$, then $\egp(\mu G_1) < \egp(G_1) - 2$. Thus $\mu \in \dc3 \egp$ and, since $\dc3 \egp \ss \dc1 \eg$ by (S2), $\mu \in \dc1 \eg$, a contradiction.
  We conclude that (v) holds.

  To prove the ``if'' part of the lemma, assume that (i)--(v) hold.
  We need to show that $\eg(\mu G) < \eg(G)$.
  Assume first that $\eeta(G_1, G_2) \le 2$ and $\mu \in \dc1 \egp$. By Theorem~\ref{th-richter-euler} and~(\ref{eq-eta-egp}),
  $$\eg(\mu G) \le \eh_0(\mu G) = \egp(\mu G_1) + \egp(G_2) < \egp(G_1) + \egp(G_2) = \eh_0(G) = \eg(G).$$

  Assume now that $\eeta(G_1, G_2) \ge 2$ and $\mu \in \dc1 \eg$. By Theorem~\ref{th-richter-euler} and~(\ref{eq-eta-eg}),
  $$\eg(\mu G) \le \eh_1(\mu G) = \eg(\mu G_1) + \eg(G_2) + 2 < \eg(G_1) + \eg(G_2) + 2 = \eh_1(G) = \eg(G).$$

  If $\eeta(G_1, G_2) = 1$ and $\mu \in \dc2 \eg$, then using~(\ref{eq-eta-eg}),
  $$\eg(\mu G) \le \eh_1(\mu G) = \eg(\mu G_1) + \eg(G_2) + 2 < \eg(G_1) + \eg(G_2) + 1 = \eg(G).$$

  If $\eeta(G_1, G_2) = 3$ and $\mu \in \dc2 \egp$, then using~(\ref{eq-eta-egp}),
  $$\eg(\mu G) \le \eh_0(\mu G) = \egp(\mu G_1) + \egp(G_2) < \egp(G_1) + \egp(G_2) -1 = \eg(G).$$
  Since the cases (i)--(v) cover all possible values of $\eeta$, at least one of the cases above occurs and we are done.
\end{proof}

Let us prove a similar lemma for $\egp$.

\begin{lemma}
  \label{lm-part-egp}
  Let $G$ be the $xy$-sum of connected graphs $G_1, G_2 \in \Gcxy$ and let $\mu \in \M(G_1)$ be a minor-operation such that $\mu G_1$ is connected.
  Then $\egp(\mu G) < \egp(G)$ if and only if $\mu \in \dc1 \egp$.
\end{lemma}

\begin{proof}
  Assume first that $\egp(\mu G) < \egp(G)$.
  Since $\mu G_1$ is connected, Theorem~\ref{th-richter-euler} gives that $\egp(\mu G) = \eh_1(G)$. Using Eq.~(\ref{eq-eg-1}), we have that
  $$\egp(\mu G_1) + \egp(G_2) = \eh_1(\mu G) = \egp(\mu G) < \egp(G) = \eh_1(G) = \egp(G_1) + \eg(G_2).$$
  Thus $\egp(\mu G_1) < \egp(G_1)$. Hence $\mu \in \dc1 \egp$.

  On the other hand, assume that $\mu \in \dc1 \egp$.
  Thus $\egp(\mu G_1) < \egp(G_1)$.
  By Theorem~\ref{th-richter-euler} and Eq.~(\ref{eq-eg-1}),
  $$\egp(\mu G) = \egp(\mu G_1) + \egp(G_2) < \egp(G_1) + \egp(G_2) = \egp(G),$$
  as claimed.
\end{proof}

In the statements of Lemmas~\ref{lm-part-eg} and~\ref{lm-part-egp}, we required that $\mu G_1$ is connected.
The next lemma shows that this is indeed the case for all minor-operations if $G_1$ is $\eg$-tight or $\egp$-tight in $G$.
It is not hard to see that if $G_1$ is a connected graph, then $\mu G_1$ is disconnected if and only if $\mu$ is the deletion of a cutedge of $G_1$.

\begin{lemma}
  \label{lm-no-cutedges}
  Let $G \in \Gcxy$ be a connected graph with a cutedge $e$.
  Then $\eg(G/e) = \eg(G)$ and $\egp(G/e) = \egp(G)$.
\end{lemma}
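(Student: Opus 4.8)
The plan is to reduce both claims to the Stahl--Beineke block-additivity theorem (Theorem~\ref{th-stahl-euler}) by analyzing how the cutedge $e$ interacts with the terminals. First I would observe that contracting a cutedge $e=uv$ does not change the Euler genus of the underlying graph $\widehat{G}$: indeed $e$ forms a block on its own, it contributes $0$ to the block-additive sum, and contracting it simply merges the block containing $u$ with the block containing $v$ (or, if $u$ or $v$ has degree one, deletes a trivial block), leaving the multiset of nontrivial blocks of $\widehat{G/e}$ equal to that of $\widehat{G}$ up to possibly amalgamating two blocks at a cutvertex. Since identifying a cutvertex of two blocks is exactly a $1$-sum, Theorem~\ref{th-stahl-euler} gives $\eg(\widehat{G/e}) = \eg(\widehat G)$, hence $\eg(G/e) = \eg(G)$.

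For the second equality I would pass to $G\+$. The subtlety is that $G\+$ is obtained from $G$ by adding the edge $xy$, and $(G/e)\+ = (G\+)/e$ provided $e \neq xy$; this holds because $e$ is a cutedge and $xy$ (if it were present) would never be a cutedge in a graph where $x$ and $y$ are already joined by the edge $e$'s side structure — more carefully, $e$ being a cutedge of $G$ means $x$ and $y$ lie in the (possibly equal, possibly different) sides of $G-e$, and adding $xy$ creates a cycle through $e$ unless $x,y$ are separated by $e$, in which case $e$ remains a cutedge of $G\+$ as well. I would split into the two cases: (a) $e$ is a cutedge of $G\+$ too — then the block argument above applies verbatim to $G\+$, giving $\egp(G/e)=\egp(G)$; (b) $e$ is not a cutedge of $G\+$, which means $x$ and $y$ are in the same component of $G-e$, so that in $G$ every $x$--$y$ path avoids $e$; then contracting $e$ in $G$ and adding $xy$, versus adding $xy$ to $G$ and contracting $e$, yield the same graph, and again $e$ (resp. its image) is a cutedge of the larger graph, reducing to block-additivity.

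The main obstacle I anticipate is the bookkeeping in case (b): one must check that contraction and the $(\cdot)\+$ operation genuinely commute here and that simplifying parallel edges (which the paper's convention allows) does not alter the Euler genus — the footnote in the excerpt asserts exactly this, so I would cite it. A secondary point of care is the degenerate situation where an endpoint of $e$ is a terminal: by the contraction convention for $\Gxy$, the merged vertex becomes a terminal, so no terminal is lost, and the block decomposition argument is unaffected. Once these commutativity and terminal-bookkeeping points are dispatched, both equalities follow immediately from Theorem~\ref{th-stahl-euler}, since a cutedge is a block of Euler genus $0$ and contracting it only amalgamates blocks along $1$-sums.
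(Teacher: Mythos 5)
Your first claim, $\eg(G/e)=\eg(G)$, is handled correctly and matches the paper: a cutedge is a block of Euler genus $0$, and contracting it merely amalgamates blocks at a cutvertex, so Theorem~\ref{th-stahl-euler} gives the equality. The bookkeeping remarks about $(G/e)\+ = (G\+)/e$ (valid since $xy \notin E(G)$) and about simplifying parallel edges are also fine.

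The gap is in the second claim, $\egp(G/e)=\egp(G)$. Your dichotomy is the right instinct, but you have the characterization of case~(b) backwards, and this causes you to skip the only genuinely hard case. Writing $H_1,H_2$ for the components of $G-e$: if $x$ and $y$ lie in the \emph{same} component, then $G\+-e$ is still disconnected, so $e$ \emph{remains} a cutedge of $G\+$; whereas if $x\in V(H_1)$ and $y\in V(H_2)$, the new edge $xy$ closes a cycle through $e$, so $e$ is \emph{not} a cutedge of $G\+$. You assert the opposite implication in case~(b), and then your concluding sentence (``again $e$ \ldots is a cutedge of the larger graph'') contradicts the case hypothesis you just set up, so it silently reverts to case~(a). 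The upshot is that the situation $x\in V(H_1)$, $y\in V(H_2)$ is never actually argued, yet this is precisely the case in which block-additivity cannot be applied to $G\+$, since $e$ is not a block of $G\+$ there.

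The paper's proof devotes most of its effort to that missing case and splits it further. If one endpoint of $e$ is a terminal, say $e=xw$ with $x\in V(H_1)$ and $w\in V(H_2)$, then $x$ is a cutvertex of $G\+$ and $G\+$ is the $1$-sum of $H_1$ and $H_2+xy+xw$; since $H_2+xy+xw$ is just a subdivision of $H_2+yw$, block-additivity still finishes the job. If neither endpoint of $e$ is a terminal, say $e=zw$ with $z\in V(H_1)\sm\{x\}$ and $w\in V(H_2)\sm\{y\}$, the paper instead views $G\+$ as a $yz$-sum of $H_1'=H_1+xy$ and $H_2'=H_2+e$ and uses the $2$-sum genus formula (Theorem~\ref{th-richter-euler}), noting that $\eg$ and $\egp$ of $H_2'$ are unchanged by contracting $e$ because $H_2'/e$ is homeomorphic to $H_2'$. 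Your proposal never invokes the $2$-sum formula and has no substitute for it, so as written it does not establish $\egp(G/e)=\egp(G)$ when the terminals are separated by $e$.
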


\begin{proof}
  Let $H_1$ and $H_2$ be the components of $G - e$.
  By Theorem~\ref{th-stahl-euler}, $\eg(G/e) = \eg(H_1) + \eg(H_2) = \eg(G)$.
  If both $x$ and $y$ lie in $H_1$ (or $H_2$ by symmetry), then
  by Theorem~\ref{th-stahl-euler}, $\egp(G/e) = \eg(H_1 + xy) + \eg(H_2) = \egp(G)$.
  Suppose then that $x \in V(H_1)$ and $y \in V(H_2)$.
  If $x$ (or $y$ by symmetry) and $w$ are the endpoints of $e$, then $G\+$ is the $1$-sum of $H_1$ and $H_2 + xy + xw$.
  Since $\eg(H_2 + yw) = \eg(H_2 + xy + xw)$, we have that $\egp(G/e) = \eg(H_1) + \eg(H_2 + yw) = \eg(H_1) + \eg(H_2 + xy + e) = \egp(G)$.

  Therefore we may assume that $e$ has endpoints $z \in V(H_1) \sm \{x\}$ and $w \in V(H_2) \sm \{y\}$.
  Let us view the graph $G\+$ as a $yz$-sum of graphs $H_1' = H_1+xy$ and $H_2' = H_2 + e$.
  We have that $(e,/) \in \M(H_2')$ and $\eg(H_2'/e) = \eg(H_2')$ by Theorem~\ref{th-stahl-euler} since $e$ is a block of $H_2'$.
  Similarly, $\egp(H_2'/e) = \egp(H_2')$ since $H_2'/e$ is homeomorphic to $H_2'$ and thus admits the same embeddings.
  By applying Theorem~\ref{th-stahl-euler} to $G\+$ as a $yz$-sum of $H_1'$ and $H_2'$, we obtain that $\eg(G\+/e) = \eg(G\+)$.
  We conclude that $\egp(G/e) = \egp(G)$.
\end{proof}

\begin{table}
  \centering
  \begin{tabular}{|c | c |}
    \hline
    $\eeta(G_1, G_2)$ & $\M(G_1)$ \\
    \hline
    0 & $\dc1 \egp$ \\
    1 & $\dc1 \egp \cup \dc2 \eg$ \\
    2 & $\dc1 \egp \cup \dc1 \eg$ \\
    3 & $\dc2 \egp \cup \dc1 \eg$ \\
    4 & $\dc1 \eg$ \\
    \hline
  \end{tabular}
  \caption{Possible outcomes for a minor-operation in a $\eg$-tight part of a 2-sum.}
  \label{tb-parts-eg}
\end{table}

Lemma~\ref{lm-no-cutedges} easily implies that a $\eg$-tight or $\egp$-tight part $G_1$ of an $xy$-sum $G$
has no cutedges. If $e$ is a cutedge of $G_1$, then $G_1 / e$ is connected, $\eg(G_1 /e) = \eg(G_1)$, and $\egp(G_1/e) = \egp(G_1)$.
By Lemmas~\ref{lm-part-eg} and~\ref{lm-part-egp}, $G_1$ is neither $\eg$-tight nor $\egp$-tight in $G$.
In particular, we may present the outcome of Lemma~\ref{lm-part-eg} in terms of $\M(G_1)$ as in Table~\ref{tb-parts-eg}.

\section{Critical classes, cascades, and hoppers}
\label{sc-klein-critical}

For a graph parameter $\P$, let $\C(\P)$ denote the class of $\P$-critical graphs in $\Gxy$.
Note that $G \in \C(\P)$ if and only if $\M(G) = \dc1 \P$.
We call $\C(\P)$ the \df{critical class} for $\P$.
Let $\Cc(\P)$ be the class $\C(\P) \cap \Gcxy$.
We refine the class $\C(\P)$ according to the value of $\P$:
Let $\C_k(\P)$ denote the subclass of $\C(\P)$ that contains precisely the graphs $G$ for which $\P(G) = k+1$. Let $\Cc_k(\P)$ be the class $\C_k(\P) \cap \Gcxy$ of those $\P$-critical graphs that do not contain the edge $xy$.

Let us start this section by describing the relation between the classes $\Cc(\eg)$, $\Cc(\egp)$, and $\E$ (unlabeled graphs that are critical for the Euler genus).
The next result follows from the definitions of $\E$ and $\Cc(\eg)$.

\begin{lemma}
\label{lm-cc-eg}
  For $G \in \Gcxy$, $\hat{G} \in \E$ if and only if $G \in \Cc(\eg)$.
\end{lemma}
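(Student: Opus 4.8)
The plan is simply to unwind the definitions on the two sides; the statement is a bookkeeping identity once one records that the Euler genus of a graph with terminals is that of its underlying graph, $\eg(G)=\eg(\hat G)$, and that the terminals impose no restriction on the available minor-operations once $xy\notin E(G)$. Concretely, since $G\in\Gcxy$ we have $G\in\Cc(\eg)$ iff $G\in\C(\eg)$, and the clause $(xy,/)\notin\M(G)$ is vacuous, so $\M(G)=E(\hat G)\times\{-,/\}$. For $\mu=(e,-)$ we have $\widehat{\mu G}=\hat G-e$ and for $\mu=(e,/)$ we have $\widehat{\mu G}=\hat G/e$ (the suppression of multiple edges occurring in the latter being harmless, as it does not change the Euler genus), hence $\eg(\mu G)$ equals $\eg(\hat G-e)$ or $\eg(\hat G/e)$ accordingly. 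Therefore $G\in\C(\eg)$, i.e.\ $\eg(\mu G)<\eg(G)$ for every $\mu\in\M(G)$, is equivalent to the condition $(\star)$: $\eg(\hat G-e)<\eg(\hat G)$ and $\eg(\hat G/e)<\eg(\hat G)$ for every edge $e$ of $\hat G$.

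It then remains to check that $(\star)$ is exactly the condition $\hat G\in\E$. First I would do the forward direction: assuming $(\star)$, put $k=\eg(\hat G)-1$; then $\eg(\hat G)=k+1>k$, and since $\eg$ is integer-valued, $(\star)$ gives $\eg(\hat G-e)\le k$ and $\eg(\hat G/e)\le k$ for every edge $e$, so $\hat G$ is critical for Euler genus $k$ and $\hat G\in\E_k\ss\E$. Conversely, if $\hat G$ is critical for Euler genus $k$ for some $k\ge0$, then $\eg(\hat G)>k$, whence $\eg(\hat G-e)\le k<\eg(\hat G)$ and $\eg(\hat G/e)\le k<\eg(\hat G)$ for every edge $e$, which is precisely $(\star)$. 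Chaining the two equivalences yields the lemma.

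There is no genuinely hard step; the only place that needs a moment of care is this last passage, between the ``uniform'' formulation of $\eg$-criticality (a strict drop under every individual minor-operation) and the ``level'' formulation of being critical for a specified Euler genus $k$, where one uses integrality of $\eg$ together with the canonical choice $k=\eg(\hat G)-1$. One should also keep in mind throughout that $\eg(G)=\eg(\hat G)$ and that, by the convention fixed in Section~\ref{sc-preliminaries}, contracting an edge and deleting the resulting multiple edges leaves the Euler genus unchanged, so no discrepancy arises between minor-operations in $\Gxy$ and the edge operations on $\hat G$.
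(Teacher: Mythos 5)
Your proof is correct and matches the paper's intent exactly: the paper offers no written proof for this lemma, stating only that it ``follows from the definitions of $\E$ and $\Cc(\eg)$,'' and your unwinding — identifying $\M(G)$ with $E(\hat G)\times\{-,/\}$ since $xy\notin E(G)$, noting $\eg(\mu G)=\eg(\widehat{\mu G})$, and translating between the uniform formulation of $\eg$-criticality and the level formulation via $k=\eg(\hat G)-1$ — is precisely the intended verification. No gaps.
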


The next two lemmas describe the relation between the class $\Cc(\egp)$ and $\E$.

\begin{lemma}
\label{lm-cc-egp-iff}
  For $G \in \Gcxy$, $\hat{G\+} \in \E$ if and only if $G \in \Cc(\egp)$, $\et(G) > 0$, and $\eg(G / xy) < \egp(G)$.
\end{lemma}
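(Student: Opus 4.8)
\emph{Proof strategy.} The plan is to reduce both sides of the equivalence to one statement about the unlabeled graph $H := \hat{G\+}$, namely that every minor-operation on $H$ strictly decreases its Euler genus, and then to match the three listed conditions with the three kinds of operations in $\M(H)$. Since $H$ contains the edge $xy$ and $\eg$ is minor-monotone, $H \in \E$ holds if and only if $H$ is critical for Euler genus $\eg(H)-1$, equivalently $\eg(\mu H) < \eg(H)$ for every $\mu \in \M(H)$. As $E(H) = E(G) \cup \{xy\}$, the set $\M(H)$ consists of $(xy,-)$, $(xy,/)$, and the pairs $(e,-),(e,/)$ for $e \in E(G)$. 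The two operations on $xy$ are immediate: $H - xy = \hat G$, so $(xy,-) \in \dc1{\eg, H}$ precisely when $\eg(G) < \egp(G)$, i.e.\ $\et(G) > 0$; and $H/xy$ is by definition the graph $G/xy$ (contracting $xy$ turns it into a loop that is deleted, so identifying the endpoints in $H$ or in $G$ yields the same simple graph), so $(xy,/) \in \dc1{\eg, H}$ precisely when $\eg(G/xy) < \egp(G)$.

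For an edge $e \in E(G)$ I would establish the identities
$$\eg(H - e) = \egp(G - e), \qquad \eg(H/e) = \egp(G/e),$$
which say that deleting, resp.\ contracting, $e$ commutes with the operation of adding the edge $xy$ and passing to the underlying simple graph. The first is immediate, since $G - e \in \Gcxy$ and $H - e = \hat{(G-e)\+}$. For the second one checks cases according to whether $e$ meets $\{x,y\}$: if $e$ is disjoint from $\{x,y\}$ the edge $xy$ of $H$ is untouched; if $e=xz$ (the case $e=yz$ being symmetric) the contracted vertex is the terminal of $G/e$ and the edge $xy$ of $H$ becomes the edge joining that terminal to $y$, possibly coalescing under simplification with a pre-existing edge $zy$; and $e$ cannot join both terminals because $xy \notin E(G)$. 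In every case $H/e$ and $\hat{(G/e)\+}$ are the same simple graph. Consequently, for $e \in E(G)$, $(e,-)$ (resp.\ $(e,/)$) lies in $\dc1{\eg,H}$ if and only if $(e,-)$ (resp.\ $(e,/)$) lies in $\dc1{\egp, G}$.

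Assembling these facts, $H$ is $\eg$-critical if and only if every $\mu \in \M(G) = \{(e,-),(e,/) : e \in E(G)\}$ lies in $\dc1{\egp, G}$ --- i.e.\ $\M(G) = \dc1\egp$, which is precisely $G \in \C(\egp)$ --- and, in addition, $\et(G) > 0$ and $\eg(G/xy) < \egp(G)$. Since $G \in \Gcxy$, the condition $G \in \C(\egp)$ coincides with $G \in \Cc(\egp)$, so these are exactly the three conditions in the statement, completing the proof. The only real work is the bookkeeping in the second paragraph: checking that the minor-operations of $H$ indexed by $E(G)$ correspond bijectively, with genus values matched through $\egp$, to those of $G$ under the $\Gxy$ convention that contracting an edge at a terminal produces a terminal. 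Everything else is a direct unwinding of the definitions, so I do not expect a serious obstacle.
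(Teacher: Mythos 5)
Your proposal is correct and follows essentially the same route as the paper's proof: identify $H=\hat{G\+}$, split $\M(H)$ into $\M(G)\cup\{(xy,-),(xy,/)\}$, match the operations in $\M(G)$ with $\dc1{\egp,G}$ via $\eg(\mu H)=\egp(\mu G)$, and read off the two remaining conditions from $H-xy\iso\hat G$ and $H/xy\iso G/xy$. The only difference is that you verify the identity $\eg(\mu H)=\egp(\mu G)$ by an explicit case check (which the paper simply asserts), so no changes are needed.
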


\begin{proof}
  Let $H = \hat{G\+}$.
  Note that $\eg(H) = \egp(G)$ and $\M(H) = \M(G) \cup \{(xy, -), (xy, /)\}$.
  Since $\eg(\mu H) = \egp(\mu G)$ for each $\mu \in \M(G)$,
  we get that $\eg(\mu H) < \eg(H)$ for each $\mu \in \M(G)$ if and only if $G \in \Cc(\egp)$.
  Since $H - xy \iso \hat{G}$, we obtain that $\eg(H - xy) < \eg(H)$ if and only if $\et(G) > 0$.
  Since $H/xy \iso G/xy$, we have that $\eg(H / xy) < \eg(H)$ if and only if $\eg(G / xy) < \egp(G)$.
  As $H \in \E$ if and only if $\eg(\mu H) < \eg(H)$ for each $\mu \in \M(H)$, the result follows.
\end{proof}

\begin{lemma}
\label{lm-cc-egp}
Let $G \in \Cc(\egp)$. If $\et(G) = 0$, then $\hat{G} \in \E$.
If $\et(G) > 0$, then either $\hat{G\+} \in \E$, or $\hat{G\+} \in \E^*$ and $\hat{G / xy} \in \E$.
\end{lemma}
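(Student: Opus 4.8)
The plan is to split into the two cases $\et(G) = 0$ and $\et(G) > 0$ and to lean heavily on the characterization already proved in Lemma~\ref{lm-cc-egp-iff}, together with Lemma~\ref{lm-cc-eg} and the separation inequalities (S1)--(S2) of Lemma~\ref{lm-two-separated}. First, suppose $\et(G) = 0$. Then $\eg(G) = \egp(G)$, so $\M(G) = \dc1\egp \ss \dc1\eg$ by (S2) with $k=1$ and $\et=0$ (this is exactly the containment $\dc{1+0}\egp \ss \dc1\eg$). Hence every minor-operation available for $G$ decreases $\eg(G)$, i.e.\ $G \in \Cc(\eg)$, and then $\hat G \in \E$ by Lemma~\ref{lm-cc-eg}. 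That disposes of the first sentence.

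For the case $\et(G) > 0$, write $H = \hat{G\+}$, so $\eg(H) = \egp(G)$ and $\M(H) = \M(G) \cup \{(xy,-),(xy,/)\}$. As in the proof of Lemma~\ref{lm-cc-egp-iff}, $G \in \Cc(\egp)$ already gives $\eg(\mu H) < \eg(H)$ for every $\mu \in \M(G)$, and $\et(G) > 0$ gives $\eg(H - xy) < \eg(H)$. So $H$ fails to be $\eg$-critical only possibly through the single operation $(xy,/)$; that is, $H \in \E$ precisely when $\eg(G/xy) < \egp(G)$, which is the first alternative. The remaining possibility is that $\eg(G/xy) = \egp(G)$. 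In that sub-case I claim $H \in \E^*$: we have just checked $\eg(H-e) < \eg(H)$ for every $e \in E(H)$ (the edges of $G$ via $G \in \Cc(\egp)$, and $xy$ via $\et(G) > 0$), so $H$ is deletion-critical for Euler genus $\egp(G)$; one must also note $H$ has minimum degree at least $3$ --- here I would invoke that a $\Cc(\egp)$-critical graph cannot have a vertex of degree $\le 1$ (a pendant edge could be deleted without changing $\egp$, by Theorem~\ref{th-stahl-euler} / Lemma~\ref{lm-no-cutedges}) nor a vertex of degree $2$ that is suppressible without affecting $\egp$, and adding $xy$ only raises degrees of $x,y$; the terminals themselves have degree at least $2$ in $G$ hence at least $3$ in $H$. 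This gives $\hat{G\+} = H \in \E^*$.

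It remains, in this sub-case, to show $\hat{G/xy} \in \E$, i.e.\ that $K := \hat{G/xy}$ is Euler-genus-critical. Set $k = \eg(G) = \egp(G) - \et(G)$; since $\eg(G/xy) = \egp(G)$ and $\et(G) \ge 1$ we have $\eg(K) = \egp(G) \ge k+1 > \eg(G)$. There is a natural surjection from $\M(G)$ onto $\M(K)$ (each edge of $K$ comes from an edge of $G$, possibly with identifications). For $\mu \in \M(G)$, I want $\eg(\mu K) < \eg(K) = \egp(G)$. The key observation is the commutation $\mu(G/xy) = (\mu G)/xy$ and the inequality $\eg(\mu(G/xy)) \le \egp(\mu G)$ — contracting/identifying the terminals never needs more genus than adding the edge $xy$, since $(\mu G)/xy$ is a minor of $(\mu G)\+$. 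Now $G \in \Cc(\egp)$ gives $\egp(\mu G) \le \egp(G) - 1 = \eg(K) - 1$, so $\eg(\mu K) \le \egp(\mu G) < \eg(K)$, as required. One must also check the operations in $\M(K)$ not coming from a single $\mu \in \M(G)$ (namely the image, if any, of the pair $x,y$ being merged is handled already; no genuinely new operation arises because $\M(K) = \{\,\bar\mu : \mu \in \M(G)\,\}$ once multiple edges are simplified).

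The step I expect to be the main obstacle is verifying the minimum-degree condition needed to place $H$ in $\E^*$ rather than merely in the deletion-critical class without the degree hypothesis, and — more delicately — ruling out that a minor-operation on $K = \hat{G/xy}$ could fail to be "inherited" from $G$ in a way that breaks criticality (e.g.\ an edge of $G$ incident to a terminal whose contraction in $G$ behaves differently from its contraction in $G/xy$). Handling the terminal-incident edges carefully, using that identification of $x$ and $y$ commutes with the other minor-operations and that $\eg(\cdot/xy) \le \egp(\cdot)$, is where the argument needs the most care; the genus bookkeeping via Theorem~\ref{th-richter-euler} and (S1)--(S2) is then routine.
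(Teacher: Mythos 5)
Your proposal is correct and follows essentially the same route as the paper: the $\et(G)=0$ case via (S2) and Lemma~\ref{lm-cc-eg}, and for $\et(G)>0$ the dichotomy on whether $\eg(G/xy)<\egp(G)$, followed by the inequality $\eg(\mu(G/xy))\le\eg(\mu G\+)=\egp(\mu G)<\egp(G)$ to conclude $\hat{G/xy}\in\E$. The one place you go beyond the paper is in explicitly verifying the minimum-degree-3 hypothesis for $\hat{G\+}\in\E^*$; the paper asserts $H\in\E^*$ directly after checking that every edge deletion drops the genus, leaving the (routine but genuinely necessary) degree check implicit, and your worry about ``inheritance'' of minor-operations from $G$ to $G/xy$ is resolved exactly as the paper does, by using only the one-sided minor containment rather than an exact commutation.
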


\begin{proof}
  If $\et(G) = 0$, then $\M(G) = \dc1 \eg$ by (S2) and thus $G \in \Cc(\eg)$.
  Therefore $\hat{G} \in \E$ by Lemma~\ref{lm-cc-eg}.
  Suppose now that $\et(G) > 0$. Let $H = \hat{G\+}$.
  Since $G \in \Cc(\egp)$, we have that $\eg(\mu H) < \eg(H)$ for each $\mu \in \M(G)$.
  As $\eg(H - xy) = \eg(G) < \eg(G) + \et(G) = \eg(H)$,
  we have that $H \in \E^*$.
  If $\eg(G / xy) < \egp(G)$, then $H \in \E$ (since both deletion and contraction of $xy$ decrease the Euler genus of $H$).
  Hence we may assume that $\eg(G /xy) = \egp(G)$.
  Let $\mu \in \M(G /xy)$ be a minor-operation in $G / xy$.
  Since $\mu$ is also a minor-operation in $G$, we obtain that
  $$\eg(\mu(G /xy)) \le \eg(\mu G\+) = \egp(\mu G) < \egp(G) = \eg(G / xy)$$
  as $\mu(G /xy)$ is a minor of $\hat{\mu G\+}$.
  Since $\mu$ was chosen arbitrarily, $G/xy \in \E$.
\end{proof}

A graph $G \in \Gcxy$ is called a \df{cascade} if $G$ satisfies the following properties:
\begin{enumerate}[label=(C\arabic*)]
\item
  $\M(G) = \dc1 \eg \cup \dc1 \egp$ (i.e., each minor operation decreases $\eg$ or $\egp$).
\item
  $G \not\in \Cc(\eg)$ (i.e., some minor operation does not decrease $\eg$).
\item
  $G \not\in \Cc(\egp)$ (i.e., some minor operation does not decrease $\egp$).
\end{enumerate}

Let $\S$ be the class of all cascades.
We refine the class $\S$ according to the Euler genus.
Let $\S_k$ be the subclass of $\S$ containing those graphs $G$ for which $\egp(G) = k+1$.
It is not hard to see that for $G \in \S_k$ we have that $\eg(G) = k$.

\begin{lemma}
  \label{lm-cascades}
  If $G \in \S$, then $\et(G) = 1$.
\end{lemma}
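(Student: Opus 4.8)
The plan is to eliminate the two extreme values $\et(G)=0$ and $\et(G)=2$ directly from the definition of a cascade, using the ``separation'' inclusions (S1) and (S2) of Lemma~\ref{lm-two-separated}; since $\et(G)\in\{0,1,2\}$ always holds, this leaves $\et(G)=1$. No new geometric input is needed: the whole argument is a short set-theoretic manipulation of the families $\dc1\eg$ and $\dc1\egp$, which is why I expect there to be no real obstacle here — the content has already been isolated in Lemma~\ref{lm-two-separated}.

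First I would handle the case $\et(G)=0$. Applying (S2) with $k=1$ and $\et=0$ gives $\dc1\egp\ss\dc1\eg$. Combined with property (C1), $\M(G)=\dc1\eg\cup\dc1\egp=\dc1\eg$, so $G$ is $\eg$-critical. As $G\in\Gcxy$ this means $G\in\Cc(\eg)$, contradicting (C2). Hence $\et(G)\neq0$.

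Next I would handle the case $\et(G)=2$. Applying (S1) with $k=1$ and $\et=2$ gives $\dc1\eg\ss\dc1\egp$. Combined with (C1), $\M(G)=\dc1\eg\cup\dc1\egp=\dc1\egp$, so $G$ is $\egp$-critical, i.e.\ $G\in\Cc(\egp)$, contradicting (C3). Hence $\et(G)\neq2$.

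Finally, since $\et(G)\in\{0,1,2\}$ and the values $0$ and $2$ have been excluded, we conclude $\et(G)=1$. The only point worth double-checking in the write-up is that the indices in (S1)/(S2) are used with the correct specialization (namely $k=1$ together with $\et=0$ for (S2) and $\et=2$ for (S1)), so that the asserted inclusions really are $\dc1\egp\ss\dc1\eg$ and $\dc1\eg\ss\dc1\egp$ respectively; everything else is immediate from the definitions of $\Cc(\eg)$, $\Cc(\egp)$, and of a cascade.
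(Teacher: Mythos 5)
Your proof is correct and is essentially identical to the paper's: both cases are dispatched by specializing (S1) and (S2) of Lemma~\ref{lm-two-separated} exactly as you do, obtaining $\dc1\egp\ss\dc1\eg$ when $\et(G)=0$ (contradicting (C2)) and $\dc1\eg\ss\dc1\egp$ when $\et(G)=2$ (contradicting (C3)). Your index checks are right, and the extra step through (C1) just makes explicit what the paper leaves implicit.
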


\begin{proof}
  If $\et(G) = 0$, then $\dc1 \egp \ss \dc1 \eg$ by (S2), violating (C2).
  If $\et(G) = 2$, then $\dc1 \eg \ss \dc1 \egp$ by (S1), violating (C3).
  Thus $\et(G) = 1$.
\end{proof}

In this paper we shall show that the class of cascades is nonempty.
In particular, we will determine the class $\S_1$ which appears as a class of building blocks for obstructions of connectivity 2 for the Klein bottle.
The following lemma is an immediate consequence of (C1)--(C3).

\begin{lemma}
\label{lm-eg-and-egp}
  Let $G \in \Gcxy$. If $\M(G) = \dc1 \eg \cup \dc1 \egp$, then
  $G \in \Cc(\eg) \cup \Cc(\egp) \cup \S$.
\end{lemma}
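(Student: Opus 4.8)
The statement is an immediate consequence of the definitions, so the plan is simply to unwind them carefully. First I would note that the hypothesis $\M(G) = \dc1 \eg \cup \dc1 \egp$ is literally condition (C1) in the definition of a cascade, and that $G \in \Gcxy$ is given. The argument then proceeds by a trichotomy according to whether or not $G$ lies in $\Cc(\eg)$ or in $\Cc(\egp)$.

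If $G \in \Cc(\eg)$ or $G \in \Cc(\egp)$, then $G$ already lies in the target union $\Cc(\eg) \cup \Cc(\egp) \cup \S$ and there is nothing more to do. So I would assume $G \notin \Cc(\eg)$ and $G \notin \Cc(\egp)$; these two assumptions are exactly conditions (C2) and (C3). Combining them with (C1) (which holds by hypothesis) and with $G \in \Gcxy$, the definition of a cascade is satisfied, so $G \in \S$. In all cases $G \in \Cc(\eg) \cup \Cc(\egp) \cup \S$, as required.

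There is no real obstacle here. The only point that warrants care is that membership in $\Cc(\eg)$ means $\M(G) = \dc1 \eg$ \emph{together with} $G \in \Gcxy$ (since $\Cc(\eg) = \C(\eg) \cap \Gcxy$), and likewise for $\egp$; but because $G \in \Gcxy$ is assumed throughout, the $\Gcxy$ part is automatic and causes no trouble. Thus the proof is a one-line case split once the definitions of $\Cc(\eg)$, $\Cc(\egp)$, and $\S$ are recalled.
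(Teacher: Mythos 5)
Your proof is correct and matches the paper's intent exactly: the paper simply asserts the lemma is "an immediate consequence of (C1)--(C3)" without writing out the argument, and your trichotomy (either $G\in\Cc(\eg)$, or $G\in\Cc(\egp)$, or else (C2) and (C3) hold and together with the hypothesized (C1) put $G\in\S$) is precisely what that remark means.
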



We shall encounter another class of building blocks for obstructions of connectivity two. This class is more mysterious and we call them hoppers. Although it turns out that they do not exist when the genus is small (see Lemma~\ref{lm-hoppers}), we suspect that they might appear when the genus becomes large. Their existence or nonexistence is intimately related to an old open question if there exist graphs that are obstructions for two different nonorientable surfaces.

Let $G \in \Gcxy$.
For a graph parameter $\P$, a graph $G$ is a \df{$\P$-hopper} if every minor operation reduces the parameter by at least 2, i.e., $\M(G) = \dc2 \P$.
Let $\H(\P)$ be the class of $\P$-hoppers.
The subclass of $\H(\P)$ of graphs with $\P$ equal to $k+1$ is denoted by $\H_k(\P)$.
In this paper, we restrict our attention to $\eg$-hoppers and $\egp$-hoppers.

Let us define two weaker forms of hoppers.
We say that $G$ is a \df{weak $\eg$-hopper} if $G \not\in \Cc(\egp)$ and $\M(G) = \dc1 \egp \cup \dc2 \eg$.
Note that necessarily $\et(G) = 0$ by (S1); and $G \in \Cc(\eg)$ by (S2).
We say that $G$ is a \df{weak $\egp$-hopper} if $G \not\in \Cc(\eg)$ and $\M(G) = \dc1 \eg \cup \dc2 \egp$.
Note that $\et(G) = 2$ by (S2) and $G \in \Cc(\egp)$ by (S1).
Let $\Hw(\eg)$ and $\Hw(\egp)$ be the class of weak $\eg$-hoppers and weak $\egp$-hoppers, respectively.
Let $\Hw_k(\P)$ be the subclass of $\Hw(\P)$ such that $G \in \Hw_k(\P)$ if $\P(G) = k+1$.
The next result follows directly from the definition of weak hoppers.

\begin{lemma}
\label{lm-weak-hoppers}
  Let $G \in \Gcxy$.
  If $\M(G) = \dc2 \eg \cup \dc1 \egp$, then $G \in \Cc(\egp) \cup \Hw(\eg)$.
  If $\M(G) = \dc1 \eg \cup \dc2 \egp$, then $G \in \Cc(\eg) \cup \Hw(\egp)$.
\end{lemma}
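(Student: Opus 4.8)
The plan is to obtain both assertions directly from the definitions of weak $\eg$-hoppers and weak $\egp$-hoppers, using only a single case distinction and no genus computation.

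For the first assertion, let $G \in \Gcxy$ with $\M(G) = \dc2\eg \cup \dc1\egp$, and split according to whether $G \in \Cc(\egp)$. If $G \in \Cc(\egp)$, then trivially $G \in \Cc(\egp) \cup \Hw(\eg)$ and there is nothing to prove. If $G \notin \Cc(\egp)$, then, rewriting the hypothesis as $\M(G) = \dc1\egp \cup \dc2\eg$ (union is commutative), we see that $G$ satisfies verbatim the defining conditions of a weak $\eg$-hopper, so $G \in \Hw(\eg)$. As a sanity check one may note that the side remarks attached to that definition, namely $\et(G) = 0$ via (S1) and $G \in \Cc(\eg)$ via (S2), are automatically consistent here, though they play no role in the argument.

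The second assertion is entirely symmetric: assume $\M(G) = \dc1\eg \cup \dc2\egp$ and split on whether $G \in \Cc(\eg)$. If $G \in \Cc(\eg)$, we are done; if not, the hypothesis is precisely the defining equality for a weak $\egp$-hopper, whence $G \in \Hw(\egp)$.

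I do not expect any genuine obstacle: the whole content is an unwinding of definitions, the only point of care being to match the union $\dc2\eg \cup \dc1\egp$ (respectively $\dc1\eg \cup \dc2\egp$) with the exact form in which the weak-hopper classes were defined. This is why the statement is flagged in the text as following directly from the definitions.
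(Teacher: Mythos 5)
Your proof is correct and matches the paper's intent exactly: the paper gives no proof, stating only that the lemma ``follows directly from the definition of weak hoppers,'' and your argument is precisely that direct unwinding, with the single case split on membership in $\Cc(\egp)$ (resp.\ $\Cc(\eg)$).
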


For the record we also state the following observation.

\begin{observation}
  The class $\H_k(\eg)$ is empty if and only if each graph $G \in \E_{k-1}$ has $\eg(G) = k$.
\end{observation}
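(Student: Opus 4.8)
The plan is to unravel both directions directly from the definitions, using the relationship between $\E_{k-1}$ (graphs with $\eg = k$ that are $\eg$-critical, i.e. every minor-operation drops $\eg$ by at least $1$) and $\H_k(\eg)$ (graphs in $\Gcxy$ with $\egp = k+1$ for which every minor-operation drops $\eg$ by at least $2$). The key technical fact I would lean on is Lemma~\ref{lm-cc-eg}, which identifies $\Cc(\eg)$ with the labelled versions of graphs in $\E$, together with the range bound $\et \in \{0,1,2\}$ and the separation inclusions (S1), (S2) of Lemma~\ref{lm-two-separated}.

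For the ``if'' direction (assuming every $G \in \E_{k-1}$ has $\eg(G) = k$, show $\H_k(\eg) = \emptyset$): suppose $G \in \H_k(\eg)$, so $G \in \Gcxy$, $\egp(G) = k+1$, and $\M(G) = \dc2 \eg$. In particular $\dc2\eg \ss \dc1\eg$, so $G \in \Cc(\eg)$, hence $\hat G \in \E$ by Lemma~\ref{lm-cc-eg}. Write $e = \eg(G) = \eg(\hat G)$; since $\egp(G) = e + \et(G) = k+1$ and $\et \le 2$, we get $e \ge k-1$. But $\hat G \in \E$ means $\hat G \in \E_{e-1}$, and the hypothesis applied with the shifted index forces... here I need to be careful: the hypothesis is stated only for $\E_{k-1}$, so I should instead argue that $\et(G) \le 1$ is impossible and $\et(G) = 2$ with $e = k-1$ is impossible. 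If $\et(G) = 2$ then $\eg(G) = k-1$, so $\hat G \in \E_{k-2}$; but then $\egp(G) = \eg(G) + 2 = k+1$ is consistent, so I cannot rule this out from the $\E_{k-1}$ hypothesis alone — which suggests the correct reading is that the observation's hypothesis should be ``$\eg(G) = k$ for each $G \in \E_{k-1}$'' is equivalent to a statement purely about index $k-1$, and what actually fails is the existence of an $\eg$-critical graph of Euler genus $k-1$ with $\et = 2$. I would therefore reorganize: $\H_k(\eg)$ nonempty $\iff$ there is $G \in \Cc(\eg)$ with $\egp(G) = k+1$ and $\M(G) = \dc2\eg$; unwinding, $\M(G) = \dc2\eg$ together with $G \in \Cc(\eg)$ and $\et(G) \in \{0,1,2\}$ forces (via (S1): $\dc2\eg \ss \dc{0}\egp$ is vacuous, but $\dc2\eg = \M(G)$ means every operation drops $\egp$ by at least $\et$) that $\eg$ drops by $2$ everywhere; and conversely any $\hat G \in \E$ with $\eg(\mu G)\le \eg(G)-2$ for all $\mu$ gives a hopper at level $\eg(G)+\et(G)-1$.

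Cleanest line: I claim $G \mapsto \hat G$ is a bijection between $\H_k(\eg)$ and the set of $H \in \E$ such that $\eg(H - e) \le \eg(H) - 2$ and $\eg(H/e) \le \eg(H) - 2$ for every $e$, with $\eg(H) = k - \et + 1$ where the edge $xy$ added back has $\et$... No — the absence of $xy$ is the point, so $\hat G = H$ directly and $\egp(G) = \eg(H + xy)$. So $\H_k(\eg)$ nonempty iff there is $H \in \E$ with every edge-deletion and edge-contraction dropping $\eg$ by $\ge 2$, and with some choice of a nonadjacent vertex pair $x,y$ such that $\eg(H+xy) = k+1$. A graph all of whose minor-operations drop $\eg$ by $\ge 2$ is precisely a graph $H$ with $\eg(H-e) \le \eg(H)-2$; such $H$ lies in $\E_{\eg(H)-1}$ and in fact every graph in $\E_{\eg(H)-1}$ has the opposite property ($\eg$ drops by exactly ... ) unless... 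The equivalence in the Observation then reads: $\H_k(\eg)$ empty $\iff$ no such $H$ exists $\iff$ every $\eg$-critical graph $H$ with $\eg(H) = k$ (up to the $\et$-shift) actually has some operation dropping $\eg$ by exactly one. I would present this as: $\H_k(\eg) \neq \emptyset$ iff $\exists H \in \E_{k-1}$ with $\eg(\mu H) \le k - 2$ for all $\mu \in \M(H)$, iff $\exists H \in \E_{k-1}$ that is not merely $\eg$-critical but ``doubly'' so; and the negation of the latter is exactly ``every $H \in \E_{k-1}$ has some edge $e$ with $\eg(H - e) = k - 1$ or $\eg(H/e) = k-1$'', i.e. $\eg(H) = k$ is witnessed to be critical-but-not-double.

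The main obstacle I anticipate is precisely the bookkeeping around $\et$ and the role of the terminals: pinning down exactly why the labelled parameter $\egp(G) = k+1$ condition translates to ``$\eg(\hat G) = k$'' rather than $k-1$ or $k-2$. I expect this works out because for a hopper $G$, the deletion $(xy,-)$ is \emph{not} in $\M(G)$ (since $xy \notin E(G)$), but adding $xy$ and considering... hmm, actually $\et(G)$ could genuinely be $0$: if $\et(G) = 0$ then $\egp(G) = \eg(G) = k+1$, contradicting $\eg(H) \le k$ for $H \in \E_{k-1}$ unless $k+1 \le k$. So $\et(G) \ge 1$; and if $\et(G) = 2$, then by (S1) $\dc2\eg \ss \dc0\egp = \M(G)$, fine, but also one checks $\dc2\eg \ss \dc1\egp$... no, (S1) with $k'=0$, $\et = 2$ gives $\dc0\eg \ss \dc0\egp$, useless. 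I think the resolution is that the paper implicitly intends $\et(\hat G)$ computed with a \emph{specific} nonadjacent pair, and the count $\egp = k+1$, $\eg = k$ means $\et = 1$, the generic case, and the $\et = 2$ case is folded into a neighbouring index — this is the delicate point I would nail down first. Once the index arithmetic is settled, both directions are a two-line unfolding of $\M(G) = \dc2\eg$ versus $\M(\hat G) = \dc1\eg$ via Lemma~\ref{lm-cc-eg}.
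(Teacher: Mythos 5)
The paper states this as an unproved observation, so there is no written proof to compare against; judged on its own terms, your proposal has a genuine gap, and it originates in a misreading of a definition. You take $\H_k(\eg)$ to consist of hoppers with $\egp(G)=k+1$, but the paper defines $\H_k(\P)$ as the subclass of $\H(\P)$ with \emph{$\P$ itself} equal to $k+1$; for $\P=\eg$ that means $\eg(G)=k+1$ (the convention $\egp(G)=k+1$ is used for $\S_k$, not for $\H_k(\eg)$). All of the difficulties you wrestle with --- the value of $\et(G)$, whether the hypothesis ``lives at index $k-1$ or $k-2$,'' the ``$\et$-shift'' --- are artifacts of this misreading, and your write-up never resolves them: you explicitly leave the case $\et(G)=2$ open and end by saying the index arithmetic still needs to be nailed down. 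As it stands the proposal does not prove either direction.

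With the correct definition the argument is short and $\et$ plays no role. If $G\in\H_k(\eg)$, then $\hat G$ satisfies $\eg(\hat G)=k+1>k-1$ and $\eg(\mu \hat G)\le \eg(\hat G)-2=k-1$ for every $\mu\in\M(\hat G)$ (no operation is lost in passing from $G$ to $\hat G$, since $xy\notin E(G)$), so $\hat G\in\E_{k-1}$ with $\eg(\hat G)\ne k$; this gives the contrapositive of one direction. Conversely, if some $H\in\E_{k-1}$ has $\eg(H)\ne k$, then $\eg(H)\ge k+1$ by definition of $\E_{k-1}$, while the standard bound $\eg(H)\le\eg(H-e)+2\le k+1$ (add the missing edge along a handle) forces $\eg(H)=k+1$ exactly; now every minor-operation drops $\eg$ from $k+1$ to at most $k-1$, so declaring any two nonadjacent vertices of $H$ to be the terminals produces a member of $\H_k(\eg)$. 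Note that your proposal never invokes the bound $\eg(H)\le\eg(H-e)+2$, yet without it one cannot rule out $\eg(H)\ge k+2$, in which case $H$ would witness nonemptiness of some $\H_m(\eg)$ with $m>k$ rather than of $\H_k(\eg)$. (A residual pedantic point, also untouched by your proposal: the converse direction needs $H$ to have two nonadjacent vertices in order to place it in $\Gcxy$.)
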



Let us now combine the properties of introduced classes with Lemma~\ref{lm-part-eg} to
characterize $\eg$-tight and $\egp$-tight parts of a 2-sum of two graphs.

\begin{table}
  \centering
  \begin{tabular}{|c | c |}
    \hline
    $\eeta(G_1, G_2)$ & $G_1$ \\
    \hline
    0 & $\Cc(\egp)$ \\
    1 & $\Cc(\egp) \cup \Hw(\eg)$ \\
    2 & $\Cc(\egp) \cup \Cc(\eg) \cup \S$ \\
    3 & $\Cc(\eg) \cup \Hw(\egp)$ \\
    4 & $\Cc(\eg)$ \\
    \hline
  \end{tabular}
  \caption{Classification of $\eg$-tight parts of a 2-sum.}
  \label{tb-general-eg}
\end{table}

\begin{theorem}
  \label{th-general-eg}
  Let $G$ be the $xy$-sum of connected graphs $G_1, G_2 \in \Gcxy$.
  The subgraph $G_1$ is $\eg$-tight in $G$ if and only if the following is true:
  \begin{enumerate}[label=\rm(\roman*)]
  \item
    If $\eeta(G_1, G_2) = 0$, then $G_1 \in \Cc(\egp)$.
  \item
    If $\eeta(G_1, G_2) = 1$, then $G_1 \in \Cc(\egp) \cup \Hw(\eg)$.
  \item
    If $\eeta(G_1, G_2) = 2$, then $G_1 \in \Cc(\egp) \cup \Cc(\eg) \cup \S$.
  \item
    If $\eeta(G_1, G_2) = 3$, then $G_1 \in \Cc(\eg) \cup \Hw(\egp)$.
  \item
    If $\eeta(G_1, G_2) = 4$, then $G_1 \in \Cc(\eg)$.
  \end{enumerate}
\end{theorem}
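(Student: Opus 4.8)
The plan is to translate Lemma~\ref{lm-part-eg} (equivalently, Table~\ref{tb-parts-eg}) into the language of the critical classes from Section~\ref{sc-klein-critical}. The proof splits into two stages: first reduce $\eg$-tightness of $G_1$ to an equation describing the set $\M(G_1)$, and then read the class membership off that equation.

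First I would dispose of the case that $G_1$ has a cutedge $e$. Then $G_1/e$ is connected and, by Lemma~\ref{lm-no-cutedges}, $\eg(G_1/e)=\eg(G_1)$ and $\egp(G_1/e)=\egp(G_1)$, so $(e,/)$ belongs to none of the sets $\dc1\eg,\dc1\egp,\dc2\eg,\dc2\egp$. By the remark following Lemma~\ref{lm-no-cutedges}, such a $G_1$ is not $\eg$-tight in $G$; and since the defining equation of each of $\Cc(\eg),\Cc(\egp),\S,\Hw(\eg),\Hw(\egp)$ expresses $\M(G_1)$ as a union of sets of the form $\dc k\P$, none of which contains $(e,/)$, such a $G_1$ lies in none of these classes. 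Hence both sides of the claimed equivalence fail, and I may assume $G_1$ is bridgeless. Then $\mu G_1$ is connected for every $\mu\in\M(G_1)$, so Lemma~\ref{lm-part-eg} applies to every minor-operation of $G_1$. Writing $S_{\eeta}$ for the set of minor-operations it lists for $\eeta=\eeta(G_1,G_2)$ (the corresponding entry of Table~\ref{tb-parts-eg}), Lemma~\ref{lm-part-eg} gives $\eg(\mu G)<\eg(G)\iff\mu\in S_{\eeta}$ for each $\mu\in\M(G_1)$. Therefore $G_1$ is $\eg$-tight iff $\M(G_1)\ss S_{\eeta}$, and since each $S_{\eeta}$ is a union of sets $\dc k\P\ss\M(G_1)$, this is equivalent to the equality $\M(G_1)=S_{\eeta}$.

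It then remains to verify, for each of the five values of $\eeta$, that $\M(G_1)=S_{\eeta}$ holds precisely when $G_1$ belongs to the class in the corresponding row of Table~\ref{tb-general-eg}. For $\eeta=0$ ($S_0=\dc1\egp$) and $\eeta=4$ ($S_4=\dc1\eg$) this is immediate from the definitions of $\Cc(\egp)$ and $\Cc(\eg)$. For $\eeta=2$ ($S_2=\dc1\eg\cup\dc1\egp$), the implication $\M(G_1)=S_2\Rightarrow G_1\in\Cc(\egp)\cup\Cc(\eg)\cup\S$ is Lemma~\ref{lm-eg-and-egp}; conversely $G_1\in\S$ gives $\M(G_1)=S_2$ by (C1), while $G_1\in\Cc(\eg)$ gives $\M(G_1)=\dc1\eg$, which together with $\dc1\egp\ss\M(G_1)$ forces $\M(G_1)=S_2$, and $G_1\in\Cc(\egp)$ is symmetric. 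For $\eeta=1$ ($S_1=\dc1\egp\cup\dc2\eg$), the forward implication is the first part of Lemma~\ref{lm-weak-hoppers}; conversely $G_1\in\Hw(\eg)$ gives $\M(G_1)=S_1$ by the definition of a weak $\eg$-hopper, and $G_1\in\Cc(\egp)$ gives $\M(G_1)=\dc1\egp\supseteq\dc2\eg$, hence again $\M(G_1)=S_1$. The case $\eeta=3$ ($S_3=\dc1\eg\cup\dc2\egp$) is handled exactly as $\eeta=1$, with the roles of $\eg$ and $\egp$ interchanged and using the second part of Lemma~\ref{lm-weak-hoppers}.

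I do not expect any genuinely difficult step; the only point requiring care is the cutedge bookkeeping at the start, which is exactly what ensures both that Lemma~\ref{lm-part-eg} is applicable to all of $\M(G_1)$ and that a part carrying a bridge cannot accidentally satisfy one of the class-defining equations. Everything else is routine matching of unions of the sets $\dc k\P$, using only the trivial inclusion $\dc k\P\ss\M(G_1)$.
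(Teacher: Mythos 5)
Your proof is correct and follows essentially the same route as the paper: dispose of cutedges via Lemma~\ref{lm-no-cutedges} so that Lemma~\ref{lm-part-eg} applies to all of $\M(G_1)$, reduce $\eg$-tightness to the equality $\M(G_1)=S_{\eeta}$, and then match that equality against the defining equations of $\Cc(\eg)$, $\Cc(\egp)$, $\S$, $\Hw(\eg)$, $\Hw(\egp)$ using Lemmas~\ref{lm-eg-and-egp} and~\ref{lm-weak-hoppers}. Your upfront, symmetric treatment of the cutedge case (showing both sides of the equivalence fail) is a minor reorganization of the paper's argument, not a different method.
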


\begin{proof}
  Assume first that $G_1$ is $\eg$-tight.
  By Lemma~\ref{lm-no-cutedges}, $\mu G_1$ is connected for each $\mu \in \M(G_1)$.
  If $\eeta(G_1, G_2) = 0$, then $\M(G_1) = \dc1 \egp$ by Lemma~\ref{lm-part-eg}.
  Thus $G_1 \in \Cc(\egp)$. Similarly, if $\eeta(G_1, G_2) = 4$, then $G_1 \in \Cc(\eg)$.
  If $\eeta(G_1, G_2) = 1$, then $\M(G_1) = \dc1 \egp \cup \dc2 \eg$. By Lemma~\ref{lm-weak-hoppers}, $G_1 \in \Cc(\egp) \cup \Hw(\eg)$.
  If $\eeta(G_1, G_2) = 3$, then $\M(G_1) = \dc1 \eg \cup \dc2 \egp$. By Lemma~\ref{lm-weak-hoppers}, $G_1 \in \Cc(\eg) \cup \Hw(\egp)$.
  Finally, if $\eeta(G_1, G_2) = 2$, then $\M(G_1) = \dc1 \eg \cup \dc1 \egp$. By Lemma~\ref{lm-cascades}, $G_1 \in \Cc(\eg) \cup \Cc(\egp) \cup \S$.

  Assume now that (i)--(v) hold.
  Since $\M(G) = \dc1 \eg \cup \dc1 \egp$ for $G \in \Cc(\eg) \cup \Cc(\egp) \cup \S \cup \Hw(\eg) \cup \Hw(\egp)$,
  Lemma~\ref{lm-no-cutedges} asserts that $\mu G_1$ is connected for each $\mu \in \M(G_1)$.
  Suppose first that $G_1 \in \Cc(\eg)$.
  Since $\M(G) = \dc1 \eg$, we obtain for each $\eeta(G_1, G_2) \in \{2,3,4\}$ that $G_1$ is $\eg$-tight by Lemma~\ref{lm-part-eg}.
  A similar argument works if $G_1 \in \Cc(\egp)$ and $\eeta(G_1, G_2) \in \{0, 1, 2\}$.
  If $\eeta(G_1, G_2) = 1$ and $G_1 \in \Hw(\eg)$, then  $\M(G_1) = \dc1 \egp \cup \dc2 \eg$ and $G_1$ is $\eg$-tight by Lemma~\ref{lm-part-eg}.
  If $\eeta(G_1, G_2) = 3$ and $G_1 \in \Hw(\egp)$, then $\M(G_1) = \dc2 \egp \cup \dc1 \eg$ and $G_1$ is $\eg$-tight by Lemma~\ref{lm-part-eg}.
  If $\eeta(G_1, G_2) = 2$ and $G_1 \in \S$, then $\M(G_1) = \dc1 \eg \cup \dc1 \egp$ by (C1) and $G_1$ is $\eg$-tight by Lemma~\ref{lm-part-eg}.
  This completes the proof since $\eeta(G_1, G_2) \in \{0,\ldots,4\}$ and we have proven that $G_1$ is $\eg$-tight in each case given by (i)--(v).
\end{proof}

The outcome of Theorem~\ref{th-general-eg} is summarized in Table~\ref{tb-general-eg}.
There is an analogous theorem for $\egp$-tight parts of 2-sums.

\begin{theorem}
  \label{th-general-egp}
  Let $G$ be the $xy$-sum of connected graphs $G_1, G_2 \in \Gcxy$.
  The subgraph $G_1$ is $\egp$-tight in $G$ if and only if $G_1 \in \Cc(\egp)$.
\end{theorem}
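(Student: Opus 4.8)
The plan is to reduce Theorem~\ref{th-general-egp} to the already-proven Lemmas~\ref{lm-part-egp} and~\ref{lm-no-cutedges}, exactly mirroring the structure of the proof of Theorem~\ref{th-general-eg} but with a single uniform outcome instead of five cases. First I would unwind the definition: $G_1$ is $\egp$-tight in $G$ means $\egp(\mu G) < \egp(G)$ for every $\mu \in \M(G_1)$. To apply Lemma~\ref{lm-part-egp}, which characterizes when $\egp(\mu G) < \egp(G)$, I need $\mu G_1$ to be connected for each $\mu \in \M(G_1)$; as in the proof of Theorem~\ref{th-general-eg}, the only way $\mu G_1$ can be disconnected is when $\mu$ deletes a cutedge of $G_1$, and Lemma~\ref{lm-no-cutedges} together with Lemma~\ref{lm-part-egp} shows such a $G_1$ cannot be $\egp$-tight (contracting the cutedge keeps $\egp$ unchanged while producing a connected graph), so this case is vacuous on the ``only if'' side.

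For the forward direction, assume $G_1$ is $\egp$-tight. By the cutedge remark, every $\mu \in \M(G_1)$ has $\mu G_1$ connected, so Lemma~\ref{lm-part-egp} applies to each $\mu$ and gives $\mu \in \dc1 \egp$. Hence $\M(G_1) \ss \dc1 \egp$, and since trivially $\dc1 \egp \ss \M(G_1)$, we get $\M(G_1) = \dc1 \egp$, which is precisely the condition $G_1 \in \C(\egp)$; as $G_1 \in \Gcxy$ by hypothesis, $G_1 \in \Cc(\egp)$. For the converse, suppose $G_1 \in \Cc(\egp)$, so $\M(G_1) = \dc1 \egp$. In particular $G_1$ has no cutedge (else Lemma~\ref{lm-no-cutedges} gives a contraction not in $\dc1 \egp$, contradicting $\M(G_1) = \dc1 \egp$), so every $\mu \in \M(G_1)$ has $\mu G_1$ connected. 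Then Lemma~\ref{lm-part-egp} gives $\egp(\mu G) < \egp(G)$ for each such $\mu$, i.e.\ $G_1$ is $\egp$-tight.

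I do not expect any real obstacle here: the theorem is essentially a restatement of Lemma~\ref{lm-part-egp} packaged in the language of tightness and critical classes, and the only subtlety — handling potentially disconnected $\mu G_1$ — is dispatched exactly as in Theorem~\ref{th-general-eg} using Lemma~\ref{lm-no-cutedges}. The mild point to be careful about is making sure the connectivity hypothesis of Lemma~\ref{lm-part-egp} is legitimately available before invoking it, which is why the no-cutedge observation must come first in both directions rather than being assumed.
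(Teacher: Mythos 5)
Your proof is correct and follows essentially the same route as the paper: the paper's own proof is the one-line combination of Lemmas~\ref{lm-part-egp} and~\ref{lm-no-cutedges} plus the definition of $\Cc(\egp)$, and you have simply spelled out the cutedge/connectivity bookkeeping that the paper leaves implicit (and which it already recorded in the remark following Lemma~\ref{lm-no-cutedges}). No gaps.
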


\begin{proof}
  By Lemmas~\ref{lm-part-egp} and~\ref{lm-no-cutedges}, $G_1$ is $\egp$-tight if and only if $\M(G_1) = \dc1 \egp$.
  By definition, $G_1 \in \Cc(\egp)$ if and only if $\M(G_1) = \dc1 \egp$.
\end{proof}

\section{Excluded minors for Euler genus 2}
\label{sc-klein-euler}

\begin{figure}
  \centering
  \includegraphics{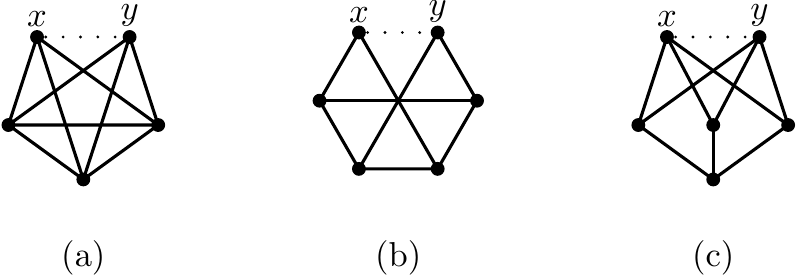}
  \caption{The class $\Cc_0(\egp)$, the third graph is the sole member of the class $\Cc_0(\eg)$.}
  \label{fg-cc-0-egp}
\end{figure}

In this section, we determine the classes $\Cc_2(\eg)$, $\Cc_2(\egp)$, and $\E_2$.
We begin by showing that the classes $\Cc_0(\eg)$ and $\Cc_0(\egp)$ are related to Kuratowski graphs $K_5$ and $K_{3,3}$.

\begin{lemma}
\label{lm-cc-0}
  The class $\Cc_0(\eg)$ consists of a single graph that is isomorphic to $K_{3,3}$ with non-adjacent terminals (Fig.~\ref{fg-cc-0-egp}c).
  The class $\Cc_0(\egp)$ consists of the three graphs shown in Fig.~\ref{fg-cc-0-egp}.
\end{lemma}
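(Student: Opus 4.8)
The plan is to deduce the whole statement from Kuratowski's theorem, i.e.\ from the fact that $\E_0 = \E^*_0 = \Forb^*(\SS_0) = \{K_5, K_{3,3}\}$ and that every \emph{proper} minor of $K_5$ or of $K_{3,3}$ is planar, fed into Lemmas~\ref{lm-cc-eg} and~\ref{lm-cc-egp}. First I would dispose of $\Cc_0(\eg)$. By Lemma~\ref{lm-cc-eg}, a graph $G \in \Gcxy$ lies in $\Cc_0(\eg)$ iff $\hat{G} \in \E_0$ and $xy \notin E(G)$. Since $K_5$ is complete, $\hat{G} = K_5$ is incompatible with $xy \notin E(G)$, so $\hat{G} \iso K_{3,3}$ and the terminals must be a non-adjacent pair, i.e.\ two vertices in the same part. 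As $K_{3,3}$ is vertex-transitive and the stabiliser of a vertex acts transitively on its non-neighbours, this determines $G$ up to isomorphism in $\Gxy$; it is the graph of Fig.~\ref{fg-cc-0-egp}c.

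Next I would prove the inclusion $\Cc_0(\egp)\subseteq\{\text{the three graphs}\}$. Let $G \in \Cc_0(\egp)$, so $\egp(G) = 1$. If $\et(G) = 0$, then $\eg(G) = 1$ as well, and Lemma~\ref{lm-cc-egp} gives $\hat{G} \in \E$; since $\eg(\hat G)=1$ this forces $\hat{G} \in \E_0$, and by the previous paragraph $G$ is again the graph of Fig.~\ref{fg-cc-0-egp}c. If $\et(G) > 0$, Lemma~\ref{lm-cc-egp} gives that $\hat{G\+}$ lies in $\E$ or in $\E^*$; since $\egp(G) = \eg(\hat{G\+}) = 1$, in either case $\hat{G\+} \in \E_0 = \E^*_0 = \{K_5, K_{3,3}\}$. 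Because $xy \notin E(G)$, this means $G$ is obtained from $K_5$ or from $K_{3,3}$ by deleting one edge and declaring its two ends to be the terminals; edge-transitivity of $K_5$ and of $K_{3,3}$ makes each of these unique up to isomorphism, yielding the first two graphs of Fig.~\ref{fg-cc-0-egp}.

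It then remains to check that each of the three graphs really is $\egp$-critical with $\egp = 1$. For $G \in \{K_5 - xy,\ K_{3,3} - xy\}$ we have $G\+ \in \{K_5, K_{3,3}\}$, so $\egp(G) = 1$; and since $xy \notin E(G)$, no $\mu \in \M(G)$ acts on the edge $xy$ of $G\+$, so $(\mu G)\+$ is (isomorphic to, after simplification) a proper minor of $G\+$, hence planar by Kuratowski, i.e.\ $\egp(\mu G) = 0$. For the graph $G$ of Fig.~\ref{fg-cc-0-egp}c, i.e.\ $K_{3,3}$ with same-part terminals $x, y$: any genus-achieving (hence cellular) embedding of $K_{3,3}$ in $\NN_1$ has exactly four faces, all of even length at least $4$, and $4+4+4+6$ is the only possibility, so there is a quadrilateral face; after relabelling it is bounded by a $4$-cycle $x\,b_i\,y\,b_j$, across which the chord $xy$ can be routed, giving $\egp(G) = \eg(K_{3,3}+xy) = 1$. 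For criticality, the automorphisms of $K_{3,3}$ fixing $\{x,y\}$ reduce the check to one edge incident with a terminal and one edge disjoint from $\{x,y\}$, together with delete and contract; in each of the four cases, suppressing the degree-two vertices of $(\mu G)\+$ yields a planar graph (one sees $K_4$ with at most a parallel edge, the wheel $W_4$, or $K_5$ minus an edge), so $\egp(\mu G) = 0$. Hence all three graphs lie in $\Cc_0(\egp)$.

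I expect the only genuinely non-formal step to be the last paragraph's treatment of Fig.~\ref{fg-cc-0-egp}c: exhibiting the projective embedding of $K_{3,3}+xy$ and verifying planarity of its handful of one-step minors by hand. Everything else is bookkeeping on top of Lemmas~\ref{lm-cc-eg} and~\ref{lm-cc-egp} and Kuratowski's theorem; in particular, the identity $\E^*_0 = \{K_5, K_{3,3}\}$ used in the second paragraph is exactly $\Forb^*(\SS_0)$ and follows from Kuratowski together with the observation that a subdivision of $K_5$ or $K_{3,3}$ with minimum degree at least $3$ is $K_5$ or $K_{3,3}$ itself.
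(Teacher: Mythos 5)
Your proof is correct and follows essentially the same route as the paper: both reduce everything to Kuratowski's theorem through Lemmas~\ref{lm-cc-eg} and~\ref{lm-cc-egp} (with $\E^*_0=\{K_5,K_{3,3}\}$) for completeness, and then verify membership of the three candidates directly. The only divergence is in checking that the one-step minors of $K_{3,3}+xy$ are planar, where the paper uses a quick counting argument ($G\+ - e$ has $9$ edges and a triangle, so is not $K_{3,3}$; $G\+ /e$ has $5$ vertices and at most $9$ edges) while you run a symmetry-reduced case analysis together with an explicit face count for the projective embedding of $K_{3,3}+xy$ --- both are valid.
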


\begin{proof}
  A graph has Euler genus greater than 0 if and only if it is non-planar.
  Since both $K_5$ and $K_{3,3}$ embed into projective plane, $\E_0 = \Forb(\SS_0) = \{K_5, K_{3,3}\}$.
  By Lemma~\ref{lm-cc-eg},  a graph $G$ belongs to $\Cc_0(\eg)$ if only if $\hat{G} \in \E$.
  Since $xy \not\in E(G)$, $\hat{G}$ is not isomorphic to $K_5$ and
  thus $\Cc_0(\eg)$ consists of the unique graph isomorphic to $K_{3,3}$ with two non-adjacent terminals.

  Let us show first that each graph in Fig.~\ref{fg-cc-0-egp} belongs to $\Cc_0(\egp)$.
  If $\hat{G\+}$ is isomorphic to a Kuratowski graph, then $G \in \Cc_0(\egp)$ by Lemma~\ref{lm-cc-egp-iff}.
  Otherwise $\hat{G}$ is isomorphic to $K_{3,3}$ with $x$ and $y$ non-adjacent.
  It suffices to show that $\mu G\+$ is planar for each minor-operation $\mu \in \M(G)$ as $G\+$ clearly embeds into the projective plane.
  Pick an arbitrary edge $e \in E(G)$.
  The graph $G\+ - e$ has 9 edges and is not isomorphic to $K_{3,3}$ as it contains a triangle.
  The graph $G\+ / e$ has only 5 vertices and (at most) 9 edges.
  Since $e$ was arbitrary, it follows that $\mu G\+$ is planar for every $\mu \in \M(G)$.
  We conclude that $G \in \Cc_0(\egp)$.

  We shall show now that there are no other graphs in $\Cc_0(\egp)$.
  Let $G \in \Cc_0(\egp)$. By Lemma~\ref{lm-cc-egp}, there is a graph $H \in \Forb^*(\SS_0)$ such that either $\hat{G}$ is isomorphic to $H$
  or $G$ is isomorphic to the graph obtained from $H$ by deleting an edge and making the ends terminals.
  It is not hard to see that this yields precisely the graphs in Fig.~\ref{fg-cc-0-egp}.
\end{proof}

Note that the first two graphs in Fig.~\ref{fg-cc-0-egp} have $\et$ equal to 1 and the last one has $\et$ equal to~0.
We summarize the properties of graphs in $\Cc_0(\egp)$ in the following lemma.

\begin{lemma}
  \label{lm-cc-0-prop}
  For every $G \in \Cc_0(\egp)$, $G /xy$ is planar, $\et(G) \le 1$, and
  $\et(G) = 1$ if and only if $G \not\in \Cc_0(\eg)$.
\end{lemma}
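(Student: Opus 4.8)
The plan is to verify each of the three claimed properties directly from the explicit description of $\Cc_0(\egp)$ obtained in Lemma~\ref{lm-cc-0}, namely that its three members are: (a) $K_{3,3}$ with $x,y$ non-adjacent but with the edge $xy$ added implicitly playing no new role — precisely, the two graphs whose $\eg$-plus closure is a Kuratowski graph, and (b) the graph isomorphic to $K_{3,3}$ with $x$ and $y$ non-adjacent, which is exactly the element of $\Cc_0(\eg)$ by the first part of Lemma~\ref{lm-cc-0}. So the three properties should fall out by a short case analysis on these three graphs.

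First I would show $G/xy$ is planar for each $G \in \Cc_0(\egp)$. In each of the three cases, identifying $x$ and $y$ produces a graph on at most $5$ vertices with at most the original edge count; in the first two cases the underlying graph is planar after identification (one checks the resulting $5$-vertex graph is not $K_5$ and contains no $K_{3,3}$-subdivision since $K_{3,3}$ is bipartite and cannot survive a vertex identification creating odd cycles), and in the third case $\hat G = K_{3,3}$ with non-adjacent terminals, and $K_{3,3}/xy$ is planar — in fact this is exactly the statement, via Lemma~\ref{lm-cc-egp-iff}, that $\hat{G\+}$ fails to be in $\E$ unless $\eg(G/xy) < \egp(G)$, so I can instead argue by contradiction: if $G/xy$ were non-planar for some $G \in \Cc_0(\egp)$, then $\egp(G) \ge 1$ and $\eg(G/xy) \ge 1 = \egp(G)$ only in the third case; but in the third case Lemma~\ref{lm-cc-egp} forces $\hat G \in \E$ when $\et(G)=0$, and I handle the $\et(G)>0$ subcase separately. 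I expect the cleanest route is just to exhibit the planar embeddings of the three small graphs $G/xy$ directly, since they are all on at most $5$ vertices.

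Next, $\et(G) \le 1$: since $\et(G) \in \{0,1,2\}$ always, I need to rule out $\et(G) = 2$. If $\et(G) = 2$ then $\egp(G) = \eg(G) + 2 \ge 2$, contradicting $\egp(G) \le 1$ which holds because $G\+$ embeds in the projective plane for every $G \in \Cc_0(\egp)$ (this was established inside the proof of Lemma~\ref{lm-cc-0}). Hence $\et(G) \le 1$. Finally, for the equivalence $\et(G) = 1 \iff G \notin \Cc_0(\eg)$: if $\et(G) = 0$, then by (S2) with $k = 1$ we get $\dc1 \egp \ss \dc1 \eg$, so $\M(G) = \dc1 \egp \ss \dc1 \eg \ss \M(G)$ gives $\M(G) = \dc1 \eg$, i.e.\ $G \in \Cc(\eg)$; since $\eg(G) = \egp(G) \le 1$ and $G$ is non-planar iff $\egp(G) = 1$, and the first two graphs in Fig.~\ref{fg-cc-0-egp} have $\hat G$ planar hence $\eg(G) = 0$, those cannot be $\eg$-critical with $\et = 0$ — so $\et(G) = 0$ forces $G$ to be the third graph, which is the unique member of $\Cc_0(\eg)$. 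Conversely if $G \in \Cc_0(\eg)$ then $\hat G = K_{3,3}$ is non-planar, so $\eg(G) = 1 = \egp(G)$, giving $\et(G) = 0$.

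The main obstacle is bookkeeping rather than depth: I must be careful that the three graphs of Fig.~\ref{fg-cc-0-egp} really are as I described, and in particular that exactly one of them (the $K_{3,3}$ with non-adjacent terminals) is the $\eg$-critical one while the other two have planar underlying graph and $\et = 1$; the note immediately preceding the statement already records this ("the first two graphs in Fig.~\ref{fg-cc-0-egp} have $\et$ equal to 1 and the last one has $\et$ equal to 0"), so the proof can lean on it. The only genuinely verification-heavy point is planarity of $G/xy$ for the two graphs whose $G\+$ is a Kuratowski graph, but each such $G/xy$ has at most five vertices and a quick inspection (no $K_5$, no $K_{3,3}$-subdivision) settles it.
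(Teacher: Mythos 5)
Your proof is correct and takes the same route the paper intends: the paper offers no explicit proof of this lemma, treating it as a direct inspection of the three graphs of $\Cc_0(\egp)$ identified in Lemma~\ref{lm-cc-0} (namely $K_5-e$ and $K_{3,3}-e$ with the ends of $e$ as terminals, and $K_{3,3}$ with nonadjacent terminals), which is exactly your case analysis. One small remark: your aside that a $K_{3,3}$-subdivision ``cannot survive a vertex identification'' is not a valid argument, but it is also unnecessary, since each $G/xy$ has at most $5$ vertices and hence cannot contain a $K_{3,3}$-subdivision at all, so the direct planarity check you fall back on settles it.
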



Let us now consider the classes $\Cc_1(\eg)$ and $\Cc_1(\egp)$.
Since a graph embeds into the projective plane if and only if it has Euler genus at most 1,
we have that $\E_1 = \Forb(\NN_1)$.
Lemma~\ref{lm-cc-eg} says that $\Cc_1(\eg)$ can be constructed from the graphs $G$ in $\E_1$ with $\eg(G) = 2$ by choosing two nonadjacent vertices as terminals.
Actually, each graph $G \in \E_1$ has $\eg(G) = 2$.
This construction yields 195 (labeled) graphs in $\Cc_1(\eg)$ and confirms that the list is complete.
Note that while there are 35 graphs in $\E_1$, the class $\Cc_1(\eg)$ is larger because graphs in $\Gxy$
have two labeled terminals.

Lemma~\ref{lm-cc-egp} provides a mean for constructing the class $\Cc_1(\egp)$.
We construct a slightly larger class and then test which of the obtained graphs are in $\Cc_1(\egp)$.
Let $G \in \Cc_1(\egp)$. If $\et(G) = 0$, then $G \in \Cc_1(\eg)$ and thus $\hat{G} \in \E_1 \ss \E^*_1$.
If $\et(G) > 0$, then $\hat{G\+} \in \E^*_1$. The class $\E^*_1$ contains 103 graphs (see~\cite{archdeacon-1981}).
Let $\A$ be the class of graphs with terminals obtained from $\E^*_1$ by either making two nonadjacent
vertices terminals or deleting an edge $e$ and making the ends of $e$ terminals.
By Lemma \ref{lm-cc-egp}, we have that $\Cc_1(\egp)\subseteq \A$.
In order to construct $\Cc_1(\egp)$, it is sufficient to check which graphs $G$ in $\A$ are minor-minimal graphs
such that $G\+$ does not embed into the projective plane.
This construction gives 250 such graphs, out of which only 227 graphs have $G\+$ 2-connected.
The intersection $\Cc_1(\eg) \cap \Cc_1(\egp)$ contains 95 graphs, so we have 132 graphs in
$\Cc_1(\egp) \setminus \Cc_1(\eg)$.

The class $\S_1$ is determined in 
Part II \cite{MS2014_II}, where it is shown that $\S_1$ contains $21$ graphs (and that all of them have $G\+$ 2-connected). 

By considering all 348 graphs in $\Cc_1(\eg) \cup \Cc_1(\egp) \cup \S_1$,
we obtained the following result by using computer.

\begin{lemma}
  \label{lm-all-in-klein}
  For every $G \in \Cc_1(\eg) \cup \Cc_1(\egp) \cup \S_1$, the graph $G\+$ embeds into the Klein bottle.
\end{lemma}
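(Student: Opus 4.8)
The plan is to verify the statement computationally, but a pen-and-paper argument would be structured as follows. The claim asserts that for each of the $348$ graphs $G$ in the three listed classes, the graph $G\+$ (obtained by adding the edge $xy$) embeds into the Klein bottle, i.e.\ $\egp(G) \le 2$. Since each of these classes is defined by a ``critical'' condition relative to Euler genus $1$ (graphs in $\Cc_1(\eg)$ have $\eg(G)=2$, those in $\Cc_1(\egp)$ have $\egp(G)=2$, and cascades $G \in \S_1$ have $\egp(G)=2$ by definition of $\S_1$), the inequality $\egp(G)\ge 2$ is automatic; the content of the lemma is the \emph{upper} bound $\egp(G)\le 2$, which is equivalent to saying that none of these $G\+$ has Euler genus $3$ or more.

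The first step is to reduce to a manageable finite check. By Lemma~\ref{lm-cc-eg}, $G \in \Cc_1(\eg)$ means $\hat G \in \E_1 = \Forb(\NN_1)$, so $\hat G$ is one of the $35$ projective-plane obstructions with two nonadjacent vertices chosen as terminals; by Lemma~\ref{lm-cc-egp} (together with the discussion preceding the lemma), every $G \in \Cc_1(\egp)$ arises from a graph in $\E^*_1$ (the $103$ minimal non-projective-planar graphs of minimum degree $\ge 3$) by either selecting two nonadjacent vertices as terminals or deleting an edge and promoting its ends to terminals; and $\S_1$ is explicitly the $21$-element list produced in Part~II~\cite{MS2014_II}. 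So the whole statement is a finite assertion about explicitly described graphs, each on a bounded number of vertices (the projective-plane obstructions have at most $\sim 10$ vertices).

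The second step is, for each such $G$, to exhibit an embedding of $G\+$ into the Klein bottle, or equivalently to certify $\egp(G)\le 2$. Concretely one runs a genus-computation routine (for instance the one underlying \cite{mohar-book}, or any implementation of a projective-planarity / Euler-genus test extended to Euler genus $2$) on each of the $348$ graphs $G\+$ and checks that the returned Euler genus is $2$. This is exactly what the sentence ``we obtained the following result by using computer'' records. A human-checkable version would group the graphs by the underlying obstruction they come from: since adding one edge to a projective-plane obstruction typically still yields a graph of small Euler genus, one can often start from a known projective-plane embedding of $\hat G$ and find a face containing both $x$ and $y$ on its boundary (or two faces, one containing $x$ and one containing $y$, joined by adding a crosscap) so that the edge $xy$ can be routed through at most one additional crosscap, giving an Euler genus $2$ embedding directly.

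The main obstacle is purely one of volume and of trusting the case analysis: there is no single clean structural reason why \emph{all} $348$ of these graphs have $\egp \le 2$ (indeed the paper emphasizes, via the example of the graph $Q$ built from $K_7$, that not every Euler genus $2$ obstruction is a Klein bottle obstruction, so some delicacy is genuinely present), and the verification must be done graph by graph. The potential pitfall is completeness of the input lists — one must be sure that the enumeration of $\Cc_1(\eg)$, the $\A$-based enumeration of $\Cc_1(\egp)$, and the Part~II list $\S_1$ are exactly as stated — and correctness of the Euler-genus subroutine on graphs of this size; both are addressed by the independent derivations of those lists earlier in the paper and in Part~II, and by cross-checking the computed genus against the known value $\egp(G)=2$ forced by criticality.
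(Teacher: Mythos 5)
Your overall strategy (a finite computer check over the $348$ graphs, which is exactly what the paper does by exhibiting explicit Klein bottle embeddings) is the right one, but there is a genuine logical gap in your reformulation of what has to be checked. You assert that ``$G^+$ embeds into the Klein bottle, i.e.\ $\egp(G) \le 2$'' and again later that exhibiting a Klein bottle embedding is ``equivalently to certify $\egp(G)\le 2$''; this equivalence is false. Euler genus $2$ only says $G^+$ embeds in \emph{some} surface of Euler genus $2$, and there are two such surfaces, the torus and the Klein bottle. A graph can have $\egp = 2$ via a toroidal embedding and yet fail to embed in the Klein bottle; $K_7$ is the standard example (Euler genus $2$, nonorientable genus $3$), and the paper itself uses the graph $Q$ in the introduction to stress precisely this distinction. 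What the lemma actually asserts is $\ngp(G) \le 2$, equivalently that $G^+$ admits a \emph{nonorientable} Euler-genus-$\le 2$ embedding (or is planar). This is strictly stronger than $\egp(G) \le 2$, and the extra strength is exactly what is used later: in Corollary~\ref{cr-klein-bottle} the step ``By Lemma~\ref{lm-all-in-klein}, $\osp(G_2) = 0$'' requires knowing $\ngp(G_2) = \egp(G_2) = 2$, which would not follow from a genus computation returning $\egp(G_2) = 2$ alone.

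Consequently your observation that for $\Cc_1(\egp)$ and $\S_1$ the bound ``$\egp(G)\le 2$'' is automatic is true but beside the point: for those classes $\egp(G) = 2$ is built into the definition, so if the lemma's content were only $\egp \le 2$ it would be vacuous there. The nontrivial content of the lemma is that none of these $348$ graphs is orientably simple once the terminal edge is added, i.e.\ that a Klein bottle (not merely torus) embedding exists. Your proposed verification ``run a genus-computation routine \dots\ and check that the returned Euler genus is $2$'' therefore checks the wrong invariant. The fix is small but essential: the computer check must either (a) produce an explicit nonorientable embedding of $G^+$ of Euler genus $\le 2$ (which is what the paper's online supplement does), or (b) compute $\ngp(G)$ rather than $\egp(G)$. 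The heuristic you describe at the end --- starting from a projective-plane embedding and adding one crosscap to route $xy$ --- does, when it succeeds, produce a genuinely nonorientable embedding and so is sound in spirit for $\S_1$ (where $\eg(G)=1$), but note that for $G \in \Cc_1(\eg)$ the underlying graph $\hat G$ is itself a projective-plane \emph{obstruction} and has no projective-plane embedding to start from, so that route needs a different starting surface there.
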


To prove Lemma~\ref{lm-all-in-klein}, it is sufficient to provide an embedding of $\hat{G\+}$ in the Klein bottle for each $G \in \Cc_1(\eg) \cup \Cc_1(\egp) \cup \S_1$.
The graphs and their embeddings in the Klein bottle are available online\footnote{Embeddings of $G\+$ in the Klein bottle for every $G \in \Cc_1(\eg) \cup \Cc_1(\egp) \cup \S_1$ are available as the supplementary appendix in the arXiv version of this work.}.
Based on this evidence, we obtain the following properties of graphs in $\Cc_1(\eg)$.

\begin{lemma}
\label{lm-cc-1-eg}
  For every $G \in \Cc_1(\eg)$, we have that $\et(G) = 0$ and $\dc2 \eg \ss \dc1 \egp$.
\end{lemma}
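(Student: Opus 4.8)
The plan is to show first that $\et(G)=0$ for every $G\in\Cc_1(\eg)$, and then deduce the inclusion $\dc2\eg\ss\dc1\egp$ as an easy consequence of property~(S1) of Lemma~\ref{lm-two-separated}. For the second part, once we know $\et(G)=0$, property~(S1) with $k=1$ gives $\dc{1+2-0}\eg=\dc3\eg\ss\dc1\egp$; but that is weaker than what we want. Instead I would argue directly: if $\mu\in\dc2\eg$ then $\eg(\mu G)\le\eg(G)-2$, so since $\egp(\mu G)\le\eg(\mu G)+2$ (Lemma~\ref{lm-two-separated}) and $\egp(G)=\eg(G)+\et(G)=\eg(G)$ (because $\et(G)=0$), we get $\egp(\mu G)\le\eg(\mu G)+2\le\eg(G)=\egp(G)$. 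This gives $\egp(\mu G)\le\egp(G)$, and to upgrade to a strict decrease one uses that $G\in\Cc_1(\eg)$ means $\mu\in\dc1\eg$, combined with the fact (to be extracted below) that when $\et(G)=0$ a genuine drop of $\eg$ forces a drop of $\egp$ as well. So the crux is really the claim $\et(G)=0$.

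To prove $\et(G)=0$, suppose for contradiction that some $G\in\Cc_1(\eg)$ has $\et(G)\ge1$. By Lemma~\ref{lm-cc-eg} we have $\hat G\in\E_1=\Forb(\NN_1)$, so $\eg(G)=\egp(\hat G\text{-ish})$… more precisely $\eg(G)=2$ (as noted in the text, every graph in $\E_1$ has Euler genus exactly $2$), hence $\egp(G)=\eg(G)+\et(G)\in\{3,4\}$. Now I would use Lemma~\ref{lm-all-in-klein}: the graph $G\+$ embeds into the Klein bottle, which has Euler genus $2$, so $\egp(G)=\eg(G\+)\le2$. This contradicts $\egp(G)\ge3$. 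Therefore $\et(G)=0$ for every $G\in\Cc_1(\eg)$. (This is exactly the point of having proved Lemma~\ref{lm-all-in-klein} by the computer-assisted embedding search: it pins down the value of $\egp$ on this finite family.)

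With $\et(G)=0$ established, I finish the inclusion. Let $\mu\in\dc2\eg$, so $\eg(\mu G)\le\eg(G)-2=0$, i.e.\ $\mu G$ is planar. Then also $\mu G\+$ has at most one more edge than $\mu G$ between two fixed vertices; more to the point, $\eg(\mu G\+)\le\eg(\mu G)+\et(\mu G)$ and $\et(\mu G)\le2$, giving $\egp(\mu G)\le2$. But I need $\egp(\mu G)<\egp(G)$. Since $\egp(G)=\eg(G)=2$, it suffices to rule out $\egp(\mu G)=2$. Here I would invoke that $G\in\Cc_1(\eg)$, so in particular $\mu\in\dc1\eg$ and $\eg(\mu G)\le0$; if moreover $\egp(\mu G)=2$ then $\et(\mu G)=2$, which by (S1) applied to $\mu G$ forces $\dc1\eg(\mu G)\ss\dc1\egp(\mu G)$ in a way incompatible with $\mu G$ being planar while $\mu G\+$ has genus $2$ — the cleanest route is simply: a planar graph $H$ with two specified vertices $x,y$ has $\eg(H+xy)\le1$ always (adding one edge to a planar graph cannot raise the Euler genus by $2$, by Richter's formula or directly since $H+xy$ embeds in the projective plane whenever $H$ is planar). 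Hence $\egp(\mu G)\le1<2=\egp(G)$, so $\mu\in\dc1\egp$, as required.

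The main obstacle is the first part, $\et(G)=0$, and the only non-formal ingredient there is Lemma~\ref{lm-all-in-klein} (the explicit Klein-bottle embeddings of all $348$ graphs in $\Cc_1(\eg)\cup\Cc_1(\egp)\cup\S_1$); everything after that is a short deduction from Richter's formula and the separation inequalities (S1)--(S2). One should double-check the elementary fact that adding a single edge to a planar graph yields a graph of Euler genus at most $1$ — this follows from Theorem~\ref{th-richter-euler} by writing $H+xy$ as an $xy$-sum of $H$ with a single edge $xy$, for which $\eh_0=0+0+2=2$ and $\eh_1=\egp(H)+1\le2$, so $\eg(H+xy)\le\min\{2,2\}=2$; in fact $\egp(H)\le1$ for planar $H$ since $H$ plus any edge embeds in $\NN_1$, giving $\eh_1\le2$ and the sharper bound. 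This keeps the argument self-contained modulo the stated results.
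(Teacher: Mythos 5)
Your first claim, $\et(G)=0$, is proved exactly as in the paper: $\eg(G)=2$ because $\hat G\in\E_1$, and $\egp(G)\le 2$ by Lemma~\ref{lm-all-in-klein}, so $\et(G)=0$. That part is fine.

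The second claim is where your argument breaks down. Everything reduces, as you say, to showing that if $\mu G$ is planar then $\egp(\mu G)\le 1$, i.e.\ that $\mu G\++$ is projective-planar. Your justification of the general principle ``adding one edge to a planar graph cannot raise the Euler genus by~$2$'' is circular: you derive $\eh_1\le 2$ from ``$\egp(H)\le1$ for planar $H$ since $H$ plus any edge embeds in $\NN_1$,'' which is precisely the statement being proved. Richter's formula applied to the $xy$-sum of $H$ with the single edge $xy$ only yields $\eg(H+xy)\le\eh_0=2$ (the handle bound), since $\eh_1=\egp(H)+\egp(xy)=\egp(H)$ is the unknown quantity itself. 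Worse, the general principle is false. For example, let $H$ be the octahedron $K_{2,2,2}$ together with two new vertices $x$ and $y$, where $x$ is joined to the three vertices of one face and $y$ to the three vertices of the antipodal face; then $H$ is a planar triangulation, but $(H+xy)/xy\iso K_{1,2,2,2}$, which is one of the $35$ minor-minimal obstructions for the projective plane, so $\egp(H)=2$ while $\eg(H)=0$. This is consistent with the paper's own framework, which explicitly allows $\et=2$ for planar graphs (e.g.\ the graphs $G_2\in\Cc_1(\egp)$ with $\et(G_2)=2$ arising in Theorem~\ref{th-e-2}).

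Consequently the inclusion $\dc2\eg\ss\dc1\egp$ cannot be deduced from a general topological fact; it is a property of the particular finite family $\Cc_1(\eg)$ (the $35$ projective-plane obstructions with two chosen terminals), and the paper establishes it exactly as you reduced it --- ``for each $\mu$ with $\mu G$ planar, $\mu G\++$ is projective-planar'' --- but then verifies this instance by instance by computer. Your reduction of the problem to that statement is correct; the missing ingredient is the verification itself, which cannot be replaced by the claimed general lemma.
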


\begin{proof}
  By Lemma~\ref{lm-all-in-klein}, $\egp(G) = \eg(G\+) \le 2$.
  Since $\eg(G) = 2$, we have that $\et(G) = \egp(G) - \eg(G) = 0$.

  The claim that $\dc2 \eg \ss \dc1 \egp$ was checked by computer.
  It is enough to show that for each $\mu \in \M(G)$ such that $\mu G$ is planar,
  the graph $\mu G\+$ is projective planar.
\end{proof}

The class of hoppers is mysterious. Although we were not able to construct any, we believe that they appear when the genus is large. However, there are none when genus is small.

\begin{lemma}
  \label{lm-hoppers}
  The classes $\Hw_1(\eg), \Hw_1(\egp)$, $\H_1(\eg)$, and $\H_1(\egp)$ are empty.
\end{lemma}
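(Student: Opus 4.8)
The plan is to prove that each of the four classes $\Hw_1(\eg)$, $\Hw_1(\egp)$, $\H_1(\eg)$, $\H_1(\egp)$ is empty by using the structural description of weak hoppers and hoppers together with the already-established facts about $\Cc_1(\eg)$ and $\Cc_1(\egp)$ (in particular Lemmas~\ref{lm-cc-1-eg} and~\ref{lm-all-in-klein}) and the $\et$-constraints from Lemma~\ref{lm-two-separated}. Recall that a weak $\eg$-hopper has $\et(G)=0$ and lies in $\Cc(\eg)$, while a weak $\egp$-hopper has $\et(G)=2$ and lies in $\Cc(\egp)$; and an $\eg$-hopper (resp.\ $\egp$-hopper) has $\M(G)=\dc2\eg$ (resp.\ $\dc2\egp$). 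Since we are in the case $\P$-value $=2$, the relevant graphs satisfy $\eg(G)=1$ or $\egp(G)=1$. First I would dispose of $\H_1(\egp)$: if $G\in\H_1(\egp)$ then $\egp(G)=2$ and every minor-operation drops $\egp$ by at least $2$, i.e., makes $G\+$ planar; but then $G$ is in particular $\egp$-critical with $\egp(G)=2$, so $G\in\Cc_1(\egp)$, and by Lemma~\ref{lm-all-in-klein} combined with the explicit data one checks no graph of $\Cc_1(\egp)$ has all minor-operations dropping $\egp$ by $2$ — indeed $\egp(G)=2$ means $\egp(\mu G)$ could only drop to $0$, forcing $\mu G\+$ planar for \emph{every} $\mu$, which is far too strong (e.g.\ deleting one edge of a $K_5$-subdivision part leaves something nonplanar). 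I would phrase this cleanly: a $\egp$-hopper with $\egp=2$ would have to have $\hat{G\+}$ critical for Euler genus $0$ with the extra property that every edge is "doubly essential", which contradicts $\E_0=\{K_5,K_{3,3}\}$ since in those graphs deleting a carefully chosen edge leaves a nonplanar... no — in $K_5$ and $K_{3,3}$ deletion of any edge gives a planar graph, so this route needs care; the cleaner contradiction is that $\eg(G)\le\egp(G)=2$ and $\et(G)=2$ forces $\eg(G)=0$, i.e.\ $G$ planar, yet then $G\+$ need not be critical. The honest argument: if $\M(G)=\dc2\egp$ then $\dc2\egp\ss\dc1\eg$ by (S2) with $\et=2$ wait $\et$ need not be $2$; rather use that $\H(\egp)\ss\Cc(\egp)$ and then use the Observation or direct inspection.

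More carefully, here is the route I would actually take. For $\H_1(\egp)$ and $\Hw_1(\egp)$: both are subclasses of $\Cc_1(\egp)$ (a hopper is a fortiori critical; a weak $\egp$-hopper is in $\Cc(\egp)$ by the remark after its definition, and has $\egp=2$ so lies in $\Cc_1(\egp)$). A weak $\egp$-hopper additionally has $\et(G)=2$, hence $\eg(G)=0$, so $\hat G$ is planar; but $\Cc_1(\egp)$ graphs with $\et=2$ would have $\hat{G\+}\in\E_1$ planar-minus-an-edge, and one checks from the list of $227$ graphs (or from Lemma~\ref{lm-cc-1-eg}-style computer verification) that no $G\in\Cc_1(\egp)$ has $\et(G)=2$ — equivalently $\egp(G)-\eg(G)\le 1$ for all of them. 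This kills $\Hw_1(\egp)$. For $\H_1(\egp)$: $G\in\Cc_1(\egp)$ with $\M(G)=\dc2\egp$; but then for any $\mu$, $\egp(\mu G)=0$, so $\mu G\+$ is planar for every $\mu$, which by Lemma~\ref{lm-cc-egp} (applied to $G\in\Cc_1(\egp)$, noting $\egp(G)=2>0$ may or may not give $\et>0$) forces $\hat{G\+}$ or $\hat{G/xy}$ to be in $\E_1$ — a graph of Euler genus $2$ whose every edge-deletion is planar, impossible since $\E_1$-graphs have Euler genus $2$ but edge-deletion only reduces to Euler genus $1$, not $0$. That is the clean contradiction: no graph in $\E_1$ becomes planar after a single minor-operation. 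I will state this as the key sub-lemma and derive both $\egp$-emptiness and, symmetrically via Lemma~\ref{lm-cc-eg} and $\Cc_1(\eg)\leftrightarrow\E_1$, the emptiness of $\H_1(\eg)$.

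For $\H_1(\eg)$ and $\Hw_1(\eg)$: a weak $\eg$-hopper lies in $\Cc(\eg)$ with $\et(G)=0$ and $\M(G)=\dc1\egp\cup\dc2\eg$; since its $\P=\eg$ value is $2$, it lies in $\Cc_1(\eg)$. By Lemma~\ref{lm-cc-1-eg}, every $G\in\Cc_1(\eg)$ satisfies $\dc2\eg\ss\dc1\egp$, so $\M(G)=\dc1\egp$, meaning $G\in\Cc_1(\egp)$ as well — but then (C3)-type reasoning is irrelevant; rather, $G$ being a weak $\eg$-hopper requires $G\notin\Cc(\egp)$, contradiction. So $\Hw_1(\eg)=\emptyset$. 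And $\H_1(\eg)\ss\Hw_1(\eg)$ trivially (an $\eg$-hopper has $\M(G)=\dc2\eg\ss\dc1\egp\cup\dc2\eg$ and, having $\dc2\eg=\M(G)\ss\dc1\egp$ by Lemma~\ref{lm-cc-1-eg}, is not in $\Cc(\egp)$ only if... actually an $\eg$-hopper with $\M(G)=\dc2\eg\ss\dc1\egp$ would be in $\Cc(\egp)$, so it fails to be a weak $\eg$-hopper, but directly: if $\M(G)=\dc2\eg$ and also $\ss\dc1\egp$, is $G$ a hopper? yes, and is it excluded? We need $\H_1(\eg)=\emptyset$: a $\eg$-hopper with $\eg(G)=2$ has every minor-op making it planar, so every minor-op drops $\eg$ by $2$; by Lemma~\ref{lm-cc-eg} $\hat G\in\E_1$; but $\hat G\in\E_1$ means deleting some edge gives Euler genus exactly $1$, not $0$ — contradiction with "every minor-op makes it planar").

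Thus the whole proof reduces to one clean observation used twice (once for $\eg$ via $\Cc_1(\eg)\cong\E_1$-with-terminals, once for $\egp$ via Lemma~\ref{lm-cc-egp}): \emph{no graph in $\E_1$ has a minor-operation reducing its Euler genus to $0$}, which holds because $\E_1=\Forb(\NN_1)$ and each such graph $H$ satisfies $\eg(H)=2$ while $\eg(H-e),\eg(H/e)\in\{1,2\}$ by minor-monotonicity and the fact that a $\Forb(\NN_1)$-obstruction, being a minor of a critical graph, has a planar... no, the actual reason is that $\E_1$-graphs have the property that deleting or contracting an edge gives a projective-planar graph, and a projective-planar graph that is \emph{not} planar has Euler genus exactly $1\neq 0$; so to reduce to $0$ one would need $\mu H$ planar for \emph{every} $\mu$, i.e.\ $H\in\Cc_0(\eg)$-style, but $\eg(H)=2>1$ rules this out immediately. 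The main obstacle will be organizing these four reductions so they genuinely follow from the stated lemmas without circularity — in particular making sure that "weak hopper" membership pins down the $\P$-critical class and the $\et$-value correctly, and that Lemma~\ref{lm-cc-egp}'s dichotomy ($\hat{G\+}\in\E$ or $\hat{G\+}\in\E^*$ with $\hat{G/xy}\in\E$) is handled in both branches. I would write the $\egp$-case as: let $G\in\H_1(\egp)\cup\Hw_1(\egp)$, then $G\in\Cc_1(\egp)$, apply Lemma~\ref{lm-cc-egp} to get $\hat G\in\E_1$ (if $\et=0$) or $\hat{G\+}\in\E^*_1$ (if $\et>0$); in either subcase exhibit a single minor-operation $\mu$ with $\egp(\mu G)=\egp(G)-1=1\neq 0$, contradicting the hopper/weak-hopper condition that every $\mu$ drops $\egp$ by $2$ (for $\H$) or that the $\eg$-part forces $\et=2$ hence $\eg(G)=0$ hence a structure absent from $\E_1$ (for $\Hw$). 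Symmetrically for the $\eg$-case using Lemma~\ref{lm-cc-1-eg}, which already tells us $\Cc_1(\eg)\ss\Cc_1(\egp)$ in effect, collapsing the weak-$\eg$-hopper defining condition $G\notin\Cc(\egp)$ into an outright contradiction.
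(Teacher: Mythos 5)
Your arguments for $\H_1(\eg)$ and $\Hw_1(\eg)$ are sound and essentially the paper's: an $\eg$-hopper with $\eg(G)=2$ would be minor-minimal nonplanar, hence would lie in $\E_0=\{K_5,K_{3,3}\}$, whose members have Euler genus $1$, a contradiction; and Lemma~\ref{lm-cc-1-eg} collapses $\M(G)=\dc1 \egp\cup\dc2 \eg$ to $\dc1 \egp$, contradicting the requirement $G\notin\Cc(\egp)$ in the definition of a weak $\eg$-hopper. The two $\egp$-cases, however, contain genuine gaps. For $\Hw_1(\egp)$ you ultimately rest on an unverified computational assertion (``no $G\in\Cc_1(\egp)$ has $\et(G)=2$; check the $227$ graphs''), which is not among the facts established in the paper. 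The paper instead notes that $\et(G)=2$ forces $\eg(G)=0$, hence $\dc1 \eg=\emptyset$ and $\M(G)=\dc2 \egp$, reducing $\Hw_1(\egp)$ to $\H_1(\egp)$ in one line with no computation.

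More seriously, for $\H_1(\egp)$ your intended contradiction is that $\hat{G\+}$ (or $\hat{G/xy}$) would be ``a graph of Euler genus $2$ whose every edge-deletion is planar.'' But the hopper hypothesis $\M(G)=\dc2 \egp$ says nothing about the two minor-operations on the edge $xy$ of $G\+$, which are excluded from $\M(G)$ by definition. In the branch where $\hat{G\+}\in\E_1$ and $\et(G)=1$, the graph $G\+ - xy=G$ has Euler genus $1$ and is not planar, so it is simply false that every minor-operation of $\hat{G\+}$ yields a planar graph, and the $\E_0$-style contradiction evaporates. What is needed is a nonplanarity witness that survives some operation belonging to $\M(G)$ itself; the paper supplies this by taking a Kuratowski graph $K$ that is a proper minor of $G\+$ and using $\eg(K+uv)=1<2=\egp(G)$ to argue that $K$ is preserved by some minor-operation avoiding $xy$. (Your ``key sub-lemma''---no graph of $\E_1$ admits a minor-operation reducing its Euler genus to $0$---happens to be true, but you only justify the weaker statement that not \emph{all} operations do so, and the strong version would in any case need the unproved fact that adding an edge to, or splitting a vertex of, a planar graph leaves it projective-planar.) Without one of these repairs the $\H_1(\egp)$ case, and hence your treatment of $\Hw_1(\egp)$ as well, is incomplete.
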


\begin{proof}
  Let $G \in \H_1(\eg)$.
  Since $G$ is non-planar, it has a Kuratowski graph $K$ as a minor. Since $\eg(G) = 2$, $K$ is a proper minor of $G$.
  Hence there is a minor-operation $\mu \in \M(G)$ such that $\mu G$ still has $K$ as a minor.
  Thus $\eg(\mu G) \ge \eg(K) = 1$. We conclude that $\mu \not\in \dc2 \eg$, a contradiction.

  Similarly, let $G \in \H_1(\egp)$. Then $\eg(G\+)=2$, and thus there is a Kuratowski graph $K$ that is a proper minor of $G\+$.
  Thus there is a minor-operation $\mu \in \M(\hat{G\+})$ such that $\hat{\mu G\+}$ has $K$ as a minor.
  Furthermore, since $\eg(K + uv) = 1$ for all $u,v \in V(G)$ by Lemma~\ref{lm-cc-0}, we may
  pick $\mu$ that does not delete nor contract $xy$. Thus $\mu \in \M(G)$.
  We have that $\egp(\mu G) \ge \eg(K) = 1$, a contradiction.

  Let $G \in \Hw_1(\egp)$. Thus $\egp(G) = 2$ and $\et(G) = 2$. Since $\eg(G) = 0$, we have that $\dc1 \eg = \emptyset$.
  We conclude that $\M(G) = \dc2 \egp$.
  Hence $G \in \H_1(\egp)$ which was already shown to be empty.

  Let $G \in \Hw_1(\eg)$. Thus $\egp(G) = 2$, $\et(G) = 0$, and $G \in \Cc_1(\eg)$.
  By Lemma~\ref{lm-cc-1-eg}, $\M(G) = \dc1 \egp$. Thus $G \in \Cc(\egp)$, a contradiction.
\end{proof}

\begin{table}
  \centering
  \begin{tabular}{|c | c |}
    \hline
    $\eeta(G_1, G_2)$ & $G_1$ \\
    \hline
    0 & $\Cc(\egp)$ \\
    1 & $\Cc(\egp)$ \\
    2 & $\Cc(\egp) \cup \Cc(\eg) \cup \S$ \\
    \hline
  \end{tabular}
  \caption{Classification of $\eg$-tight parts of a 2-sum in $\Cc_2(\eg)$.}
  \label{tb-klein}
\end{table}

Let us now state some properties of the parts of $xy$-sums in $\Cc_2(\eg)$ and $\Cc_2(\egp)$.

\begin{lemma}
  \label{lm-bounds-eg}
  Let $G$ be the $xy$-sum of connected graphs $G_1, G_2 \in \Gcxy$ such that~$\egp(G_1) \le \egp(G_2)$.
  If $G \in \Cc_2(\eg)$, then
  \begin{enumerate}[label=\rm(\roman*)]
  \item
    $\egp(G_1) = 1$,
  \item
    $\egp(G_2) = 2$,
  \item
    $\eeta(G_1, G_2) \le 2$.
  \end{enumerate}
\end{lemma}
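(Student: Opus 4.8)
The plan is to work with $G \in \Cc_2(\eg)$, so $\eg(G) = 3$, and to use Richter's formula (Theorem~\ref{th-richter-euler}) together with the basic inequalities $\eg(G_i) \le \egp(G_i) \le \eg(G_i)+2$. First I would record, using Eq.~(\ref{eq-eta-eg}), that
$$3 = \eg(G) = \eg(G_1) + \eg(G_2) + \min\{\eeta(G_1,G_2),2\}.$$
Since $\G_1, G_2 \in \Gcxy$ are connected and (being $\eg$-tight parts of a critical graph, by Lemma~\ref{lm-tight}) have no cutedges, they are not trees, so $\eg(G_i) \ge 0$; but in fact each $G_i$ must be non-planar. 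Indeed, if $\eg(G_1) = 0$ then, because $G_1$ has no cutedge, $G_1$ contains a cycle, and one checks directly (using Lemma~\ref{lm-part-eg} with $G_1$ $\eg$-tight) that some minor-operation on a planar part fails to decrease $\eg(G)$ unless $G_1$ is essentially trivial — more cleanly, a planar $2$-connected part can be ``absorbed'' so that $G$ would be a minor of a smaller critical graph, contradicting $G \in \Cc_2(\eg)$. So I expect $\eg(G_1), \eg(G_2) \ge 1$, hence $\egp(G_1), \egp(G_2) \ge 1$.

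Next, combine $\eg(G_1)+\eg(G_2) \ge 2$ with the displayed equation: $\min\{\eeta,2\} = 3 - \eg(G_1)-\eg(G_2) \le 1$, which forces $\eg(G_1)+\eg(G_2) \in \{2,3\}$ and correspondingly $\min\{\eeta(G_1,G_2),2\} \in \{1,0\}$; in particular $\eeta(G_1,G_2) \le 2$, giving~(iii) (the case $\min\{\eeta,2\}=1$ gives $\eeta=1$, the case $\eg(G_1)+\eg(G_2)=3$ gives $\eeta \le 2$). To pin down $\egp(G_1)$ and $\egp(G_2)$, use Eq.~(\ref{eq-thetas}): $\egp(G_i) = \eg(G_i) + \et(G_i)$ and $\et(G_1)+\et(G_2) = \eeta(G_1,G_2) \le 2$. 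Since $\eg(G_i)\ge 1$, we get $\egp(G_i) \ge 1$ for both; and since $\egp(G_1) \le \egp(G_2)$ with $\egp(G_1)+\egp(G_2) = \eg(G_1)+\eg(G_2)+\eeta \le 3 + 2 = 5$ — actually more precisely $\egp(G_1)+\egp(G_2) = 3 + \eeta - \min\{\eeta,2\} + (\text{something})$; here I would just argue: if $\egp(G_1) \ge 2$ then $\egp(G_2) \ge 2$, so $\eg(G_1),\eg(G_2)\ge 1$ forces $\eg(G_1)+\eg(G_2) = 2$ and $\et(G_1)=\et(G_2)=1$, i.e. $\eeta = 2$; but then I must rule this out to conclude $\egp(G_1)=1$.

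The main obstacle is exactly this last step: showing $\egp(G_1) = 1$ rather than allowing $\egp(G_1)=\egp(G_2)=2$. The idea is that if $\egp(G_1) = 2$ (so $\eg(G_1)=1$, $\et(G_1)=1$) and similarly for $G_2$, then $G_1 \in \Cc_1(\egp)$ or $G_1$ is $\eg$-tight with $\eeta = 2$, hence $G_1 \in \Cc_1(\egp) \cup \Cc_1(\eg) \cup \S_1$ by Theorem~\ref{th-general-eg}; the same for $G_2$. Then by Lemma~\ref{lm-all-in-klein} both $G_1\+$ and $G_2\+$ embed in the Klein bottle, so $\egp(G_i) \le 2$ with equality. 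Now I would derive a contradiction with criticality: pick a minor-operation $\mu$ inside $G_1$ lying in $\dc1\egp$ but not in $\dc2\egp$ (such $\mu$ exists since $\egp(G_1)=2 > 0$ means $G_1\+$ is non-planar with a Kuratowski minor that survives one operation, as in the proof of Lemma~\ref{lm-hoppers}); when $\eeta(G_1,G_2)=2$, Lemma~\ref{lm-part-eg}(iii) says $\mu$ decreases $\eg(G)$ iff $\mu \in \dc1\egp \cup \dc1\eg$, which it does. That is consistent with criticality, so instead I would look at the ``other side'': since $G$ is critical, \emph{every} $\mu \in \M(G_2)$ must decrease $\eg(G)$, and by symmetry $G_2$ must also lie in one of the three Klein-bottle-embeddable classes — and then I count: with $\eg(G_1)+\eg(G_2)=2$ and $\eeta=2$ we get $\eg(G)=4$, contradicting $\eg(G)=3$. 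This is the crux: the arithmetic $\eg(G_1)+\eg(G_2)=2$, $\eeta=2$ gives $\eg(G) = 2 + 2 = 4 \ne 3$ via Eq.~(\ref{eq-eta-eg}), so the case $\egp(G_1)=2$ is impossible, forcing $\egp(G_1)=1$ (whence $\eg(G_1)=1$, $\et(G_1)=0$), and then from $\eg(G_1)+\eg(G_2) \in\{2,3\}$ and $\eeta \le 2$ one reads off $\eg(G_2) = 2$ and $\egp(G_2) = 2$, i.e.~(ii), with $\eeta(G_1,G_2) \le 2$ surviving as~(iii). I expect the write-up to be a short case analysis on the pair $(\eg(G_1)+\eg(G_2),\ \eeta(G_1,G_2))$ constrained by Eq.~(\ref{eq-eta-eg}) to equal $3$, with the non-planarity of each part as the only genuinely non-formal input.
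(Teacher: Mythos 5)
There is a genuine gap, and it sits exactly where you placed your main structural claim. You assert that each part must be non-planar, i.e.\ $\eg(G_1),\eg(G_2)\ge 1$, justified by a vague ``absorption'' argument. This is false: the paper's own classification (Lemma~\ref{lm-cc-0} and Theorem~\ref{th-cc-2-eg}) shows that $G_1$ is typically a graph of $\Cc_0(\egp)$ such as $K_5-e$ or $K_{3,3}-e$ with the terminals at the ends of the deleted edge --- these are planar $2$-connected parts with $\eg(G_1)=0$ and $\egp(G_1)=1$, and they genuinely occur in critical $xy$-sums. A planar part cannot be ``absorbed''; what is true is only $\egp(G_1)\ge 1$ (if $\egp(G_1)=0$ then $\eh_1(G)\le 2$ forces $\eg(G)\le 2$). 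Because your arithmetic is built on the false premise $\eg(G_i)\ge 1$, the subsequent deductions (that $\min\{\eeta,2\}\le 1$, and the derivation of (i) and (ii)) do not stand, and your argument would also wrongly exclude legitimate critical graphs.

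The false premise also causes you to miss the genuinely hard case, which you dismiss in one line. To rule out $\egp(G_1)=\egp(G_2)=2$ the paper splits on $\eg(G_1)+\eg(G_2)$: if it is $\ge 2$ the arithmetic $\eg(G)=\min\{\eh_0,\eh_1\}=4$ gives the contradiction you found, but if $\eg(G_1)+\eg(G_2)=1$ one may take $\eg(G_1)=0$ with $\et(G_1)=2$, and then $\eg(G)=3$ is perfectly consistent with Richter's formula. That case is eliminated only by invoking $\H_1(\egp)=\emptyset$ (Lemma~\ref{lm-hoppers}): since $G_1$ is not an $\egp$-hopper there is $\mu\in\M(G_1)$ with $\egp(\mu G_1)\ge 1$, whence $\eg(\mu G)=\min\{0+1+2,\;\egp(\mu G_1)+2\}=3$, contradicting criticality. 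Your proposal never uses the hopper lemma, which is the essential non-formal input here. Finally, your claim $\egp(G_2)\le 2$ is obtained only as a byproduct of the flawed arithmetic; it needs an independent argument (as in the paper: $G_2^+$ is a proper minor of $G$, so $\egp(G_2)>2$ would leave a minor-operation not decreasing $\eg(G)$ below $3$).
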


\begin{proof}
  If $\egp(G_2) > 2$, then since $G_2^+$ is a proper minor of $G$, there is a minor-operation $\mu \in \M(G)$ such that $\eg(\mu G) \ge \eg(G_2^+) > 2$,
  a contradiction. Thus $\egp(G_2) \le 2$.
  If $\egp(G_1) = 0$, then $\eg(G) \le \eh_1(G) = \egp(G_1) + \egp(G_2) \le 2$ by Theorem~\ref{th-richter-euler}, a contradiction.
  Hence $\egp(G_1) \ge 1$.

  By Theorem~\ref{th-richter-euler}, we have
  $$\eh_1(G) = \egp(G_1) + \egp(G_2) \ge \eg(G) = 3.$$
  This implies that $\egp(G_2) = 2$ and (ii) holds.
  We also have
  $$\eh_0(G) = \eg(G_1) + \eg(G_2) + 2 \ge \eg(G) = 3.$$
  Therefore, $\eg(G_1) + \eg(G_2) \ge 1$.

  Suppose that $\egp(G_1) = 2$. If $\eg(G_1) + \eg(G_2) \ge 2$, then by Theorem~\ref{th-richter-euler},
  $$\eg(G) = \min\{\eh_0(G), \eh_1(G)\} = 4,$$
  a contradiction with $\eg(G) = 3$.
  Hence $\eg(G_1) + \eg(G_2) = 1$.
  Since $\egp(G_1) = \egp(G_2)$, we may exchange the roles of $G_1$ and $G_2$ if necessary and thus assume that $\eg(G_1) = 0$.
  By Lemma~\ref{lm-hoppers}, $\H_1(\egp) = \emptyset$ and thus there exists a minor-operation $\mu \in \M(G_1)$ such that $\egp(\mu G_1) \ge 1$.
  Note that $\eg(\mu G_1) = 0$.
  By Theorem~\ref{th-richter-euler},
  $$\eg(\mu G) = \min\{\eh_0(\mu G),\eh_1(\mu G)\} = \min\{\eg(\mu G_1) + \eg(G_2) + 2, \egp(\mu G_1) + \egp(G_2)\} = 3,$$
  a contradiction with $G \in \Cc_2(\eg)$.
  We conclude that $\egp(G_1) = 1$ and (i) holds.
  Since $\eg(G_1) + \eg(G_2) \ge 1$ and $\egp(G_1) + \egp(G_2) = 3$, we have that $\eeta(G_1, G_2) \le 2$ and (iii) holds.
\end{proof}

\begin{lemma}
  \label{lm-bounds-egp}
  Let $G$ be the $xy$-sum of connected graphs $G_1, G_2 \in \Gcxy$ such that~$\egp(G_1) \le \egp(G_2)$.
  If $G \in \Cc_2(\egp)$, then
  \begin{enumerate}[label=\rm(\roman*)]
  \item
    $\egp(G_1) = 1$,
  \item
    $\egp(G_2) = 2$.
  \end{enumerate}
\end{lemma}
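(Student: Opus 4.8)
The plan is to mimic the structure of the proof of Lemma~\ref{lm-bounds-eg}, but now working with the parameter $\egp$ instead of $\eg$. First I would record the governing identity: by Theorem~\ref{th-richter-euler}(ii), $\egp(G) = \eh_1(G) = \egp(G_1) + \egp(G_2)$, and since $G \in \Cc_2(\egp)$ we have $\egp(G) = 3$. Thus $\egp(G_1) + \egp(G_2) = 3$ with $\egp(G_1) \le \egp(G_2)$, which already forces $(\egp(G_1), \egp(G_2)) \in \{(0,3), (1,2)\}$. The whole lemma therefore reduces to ruling out the case $\egp(G_1) = 0$ (equivalently, that $G_1$ is planar and remains planar after adding $xy$).

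So suppose $\egp(G_1) = 0$ and hence $\egp(G_2) = 3$. The key observation is that $G_2^+$ — more precisely $\hat{G_2^+}$ — is (isomorphic to) a minor of $\hat{G}$: identifying $x$ and $y$ in $G$ and keeping $G_2$'s edges, or rather contracting a path through $G_1$ connecting $x$ to $y$ (such a path exists since $G_1$ is connected and, being a nontrivial part, has at least one edge on each side), realizes the edge $xy$ inside the $G_2$-side. Since $G_2^+$ is a \emph{proper} minor of $G$ (the $G_1$-side contributes edges not used), there is a minor-operation $\mu \in \M(G)$ with $\hat{\mu G}$ still having $\hat{G_2^+}$ as a minor, whence $\egp(\mu G) \ge \egp(\mu G) \ge \eg(\hat{G_2^+}) = \egp(G_2) = 3 = \egp(G)$, contradicting $G \in \Cc_2(\egp)$. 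To make this rigorous one must be slightly careful about where $\mu$ is chosen: the cleanest route is to pick $\mu \in \M(G_1)$, which is legitimate because a $\egp$-critical $xy$-sum has $G_1$ $\egp$-tight in $G$ (by Lemma~\ref{lm-tight}, covering $E(G)$ by the two parts), and by Theorem~\ref{th-general-egp} this gives $G_1 \in \Cc(\egp)$; but wait — if $\egp(G_1) = 0$ then $G_1 \in \Cc_{-1}(\egp)$ is essentially empty ($G_1$ would have to be a single edge or trivial), so there is a minor-operation in $G_1$ that does not decrease $\egp$, i.e. keeps $\egp(\mu G_1) = 0 = \egp(G_1)$, and by Lemma~\ref{lm-part-egp} this $\mu$ fails to decrease $\egp(G)$ — again contradicting criticality. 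This second phrasing is the safer one to write up.

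The main obstacle I anticipate is the degenerate behaviour when $\egp(G_1) = 0$: a graph $G_1 \in \Gcxy$ with $\egp(G_1) = 0$ need not have any ``slack'' minor-operation in the naive sense, because $G_1$ could conceivably be a single edge $xy$ — except that $G_1 \in \Gcxy$ forbids the edge $xy$, so $G_1$ must contain at least an $x$--$y$ path of length $\ge 2$, and then deleting or contracting an edge of that path keeps $G_1$ connected, planar, and with $\egp$ still $0$ (a planar graph plus one edge is still projective-planar, and in fact a path plus a chord stays planar), giving the required $\mu \in \M(G_1) \setminus \dc1 \egp$. I would spend a sentence nailing down this existence claim, since everything hinges on it; once $\mu$ is in hand, Lemma~\ref{lm-part-egp} finishes the contradiction immediately. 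With $\egp(G_1) = 0$ excluded, $(\egp(G_1), \egp(G_2)) = (1,2)$ is the only possibility, which is exactly (i) and (ii).
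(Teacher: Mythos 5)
Your proposal is correct and follows essentially the same route as the paper's proof. The paper rules out $\egp(G_2) > 2$ first (by noting $G_2$, with terminals, is a proper minor of $G$, so some $\mu \in \M(G_1)$ leaves $G_2$ intact and yields $\egp(\mu G) \ge \egp(G_2) > 2 = \egp(G) - 1$, contradicting criticality), and then plugs into $\egp(G_1) + \egp(G_2) = \eh_1(G) = 3$; you reverse the order of these two steps, which is immaterial. Your ``version A'' is literally the paper's argument, and your ``version B'' (via Lemma~\ref{lm-tight}, Theorem~\ref{th-general-egp}, and the observation that a graph with $\egp = 0$ and a nonempty $\M$ cannot be $\egp$-critical) is a clean equivalent reformulation. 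One stylistic note: the last paragraph of your proposal over-engineers the nonexistence of a slack minor-operation --- you do not need to exhibit an operation that preserves $\egp(G_1) = 0$, since \emph{every} $\mu \in \M(G_1)$ trivially satisfies $\egp(\mu G_1) \ge 0 = \egp(G_1)$ (Euler genus is nonnegative), which is all Lemma~\ref{lm-part-egp} needs; the only thing worth a sentence is that $\M(G_1) \ne \emptyset$, which follows since $G_1 \in \Gcxy$ is connected, contains both $x$ and $y$, and lacks the edge $xy$.
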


\begin{proof}
  If $\egp(G_2) > 2$, then, since $G_2$ is a proper minor of $G$, there is a minor-operation $\mu \in \M(G_1)$,such that
  $\mu G$ still has $G_2$ as a minor. Hence $\egp(\mu G) \ge \egp(G_2) > 2$. We conclude that $\egp(G) = \egp(\mu G) = 3$, a contradiction. This shows that $\egp(G_2)\le2$.

  By Theorem~\ref{th-richter-euler}, we have
  $$3 = \egp(G) = \eh_1(G) = \egp(G_1) + \egp(G_2).$$
  Since $\egp(G_2) \le 2$, we conclude that $\egp(G_1) = 1$ and $\egp(G_2) = 2$.
  Thus (i) and (ii) hold.
\end{proof}

Finally, we are ready to state a theorem which classifies the $xy$-sums in $\Cc_2(\eg)$.

\begin{theorem}
\label{th-cc-2-eg}
  Let $G$ be the $xy$-sum of connected graphs $G_1, G_2 \in \Gcxy$.
  If the following statements {\rm(i)--\rm(iv)} hold, then $G \in \Cc_2(\eg)$.
  \begin{enumerate}[label=\rm(\roman*)]
  \item
    $G_1 \in \Cc_0(\egp)$.
  \item
    $G_2 \in \Cc_1(\egp) \cup \S_1$.
  \item
    If $G_1 \in \Cc_0(\eg)$, then $G_2 \in \Cc_1(\egp)$.
  \item
    If $G_1 \not\in \Cc_0(\eg)$, then $\et(G_2) \le 1$.
  \end{enumerate}
  Conversely, every 2-connected graph $G \in \Cc_2(\eg)$ such that $\{x,y\}$ is a 2-vertex-cut can be obtained in this way.
\end{theorem}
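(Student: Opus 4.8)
The plan is to prove the two directions of Theorem~\ref{th-cc-2-eg} separately, relying heavily on Richter's formula (Theorem~\ref{th-richter-euler}), Lemma~\ref{lm-bounds-eg}, and Theorem~\ref{th-general-eg} together with the structural facts about $\Cc_0(\egp)$ collected in Lemmas~\ref{lm-cc-0} and~\ref{lm-cc-0-prop}, and the facts about $\Cc_1(\egp)$, $\S_1$ gathered in Lemmas~\ref{lm-cc-1-eg}, \ref{lm-hoppers}, and~\ref{lm-all-in-klein}.

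\medskip

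\noindent\textbf{The ``if'' direction.} Assume (i)--(iv) hold. First I would compute the relevant genus parameters: by (i) and Lemma~\ref{lm-cc-0-prop}, $\egp(G_1) = 1$ and $\et(G_1)\le1$, with $\et(G_1)=1$ exactly when $G_1\notin\Cc_0(\eg)$, i.e.\ $\eg(G_1)=1-\et(G_1)$. By (ii) and Lemma~\ref{lm-bounds-egp} (applied to the obvious fact that $\S_1$ and $\Cc_1(\egp)$ members have $\egp=2$), $\egp(G_2)=2$; and $\et(G_2)=1$ for $G_2\in\S_1$ (Lemma~\ref{lm-cascades}), while $\et(G_2)\in\{0,1,2\}$ for $G_2\in\Cc_1(\egp)$, with the constraint from (iv) that $\et(G_2)\le1$ whenever $\et(G_1)=1$. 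In every allowed combination one checks $\eeta(G_1,G_2)=\et(G_1)+\et(G_2)\le2$: the only way to get $\eeta=3$ would need $\et(G_1)=1$ and $\et(G_2)=2$, excluded by (iv); $\eeta=4$ needs $\et(G_1)=2$, impossible. Hence by Eq.~(\ref{eq-eta-eg}), $\eg(G)=\eg(G_1)+\eg(G_2)+\eeta(G_1,G_2) = \egp(G_1)+\egp(G_2) = 3$, so $\eg(G)=3>2$. To show $G$ is $\eg$-critical it suffices, by Lemma~\ref{lm-tight}, to show $G_1$ and $G_2$ are both $\eg$-tight; this follows from Theorem~\ref{th-general-eg}: for the value of $\eeta$ at hand ($0$, $1$, or $2$), membership of $G_i$ in $\Cc(\egp)$ (for $G_1$, always; for $G_2$, when $\eeta\le1$) or in $\Cc(\egp)\cup\Cc(\eg)\cup\S$ (when $\eeta=2$) is exactly what hypotheses (i),(ii) supply — one just has to match cases. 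The delicate sub-point is $\eeta=2$ with $G_1\in\Cc_0(\egp)\sm\Cc_0(\eg)$ (so $\et(G_1)=1$) and $G_2\in\S_1$ (so $\et(G_2)=1$): here $G_1\in\Cc(\egp)$ and $G_2\in\S$, both permitted in row $\eeta=2$ of Table~\ref{tb-general-eg}. Also when $G_1\in\Cc_0(\eg)$, (iii) forces $G_2\in\Cc_1(\egp)$ and then $\eg(G_1)=1$, $\eeta=\et(G_2)\le2$, and $G_2\in\Cc(\egp)$ handles all of $\eeta\in\{0,1,2\}$ while $G_1\in\Cc(\eg)$ is not needed for tightness of $G_1$ when $\eeta\le2$ — wait, Table~\ref{tb-general-eg} row $\eeta=0,1$ requires $G_1\in\Cc(\egp)$, which $\Cc_0(\eg)\subseteq\Cc_0(\egp)$ provides, so this is fine. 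I would organize this as a short case analysis on $(\et(G_1),\et(G_2))$.

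\medskip

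\noindent\textbf{The ``only if'' direction.} Let $G\in\Cc_2(\eg)$ be $2$-connected with $2$-cut $\{x,y\}$, write $G$ as the $xy$-sum of connected $G_1,G_2\in\Gcxy$ with $\egp(G_1)\le\egp(G_2)$. By Lemma~\ref{lm-bounds-eg}, $\egp(G_1)=1$, $\egp(G_2)=2$, and $\eeta(G_1,G_2)\le2$. By Lemma~\ref{lm-tight}, $G_1$ and $G_2$ are $\eg$-tight, so Theorem~\ref{th-general-eg} applies to each. Since $\egp(G_1)=1$, $G_1$ cannot lie in $\Cc(\eg)$ with $\egp=1$ unless... actually $\egp(G_1)=1$ forces $\eg(G_1)\le1$; combined with $\eg(G_1)>\ldots$ — no, $G_1$ need not have large genus. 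The key is: $G_1$ $\eg$-tight with $\eeta\le2$ puts $G_1\in\Cc(\egp)\cup\Cc(\eg)\cup\S$ (using $\Hw_1(\eg)=\emptyset$ from Lemma~\ref{lm-hoppers} to kill the $\eeta=1$ hopper option), and since $\egp(G_1)=1$ we get $G_1\in\Cc_0(\egp)\cup\Cc_0(\eg)\cup\S_0$; but $\S_0=\emptyset$ (a cascade has $\et=1$, and one checks no $\egp=1$ cascade exists — or cite that $\S_0$ is empty, which should follow since $\Cc_0(\eg),\Cc_0(\egp)$ as classified in Lemma~\ref{lm-cc-0} exhaust the $\egp=1$ critical graphs and Lemma~\ref{lm-eg-and-egp} then leaves no room). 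Since $\Cc_0(\eg)\subseteq\Cc_0(\egp)$, this gives $G_1\in\Cc_0(\egp)$, which is (i). Similarly $G_2$ is $\eg$-tight with $\egp(G_2)=2$; Theorem~\ref{th-general-eg} with $\eeta\le2$ gives $G_2\in\Cc(\egp)\cup\Cc(\eg)\cup\S$, and $\egp(G_2)=2$ restricts to $G_2\in\Cc_1(\egp)\cup\Cc_1(\eg)\cup\S_1$; but $\Cc_1(\eg)\subseteq\Cc_1(\egp)$ is \emph{not} generally claimed — however Lemma~\ref{lm-cc-1-eg} gives $\et(G)=0$ for $G\in\Cc_1(\eg)$, hence $\egp=\eg=2$, so indeed $\Cc_1(\eg)\subseteq\Cc_1(\egp)$, and $G_2\in\Cc_1(\egp)\cup\S_1$, which is (ii). For (iii): if $G_1\in\Cc_0(\eg)$ then $\et(G_1)=0$ so $\eg(G_1)=1$; if moreover $G_2\in\S_1$ then $\eg(G_2)=1$, giving $\eg(G)=\eg(G_1)+\eg(G_2)+\min\{\eeta,2\}$ with $\eeta=\et(G_2)=1$, so $\eg(G)=1+1+1=3$ — that's still $3$, so no contradiction yet; I need a sharper argument. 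The right approach: use that $G$ is $\eg$-critical and test a specific minor-operation. Since $G_1\in\Cc_0(\eg)$, deleting the edge $xy$ from $G_1^+$ — no, $xy\notin E(G_1)$. Instead: $G_1/xy$ is planar by Lemma~\ref{lm-cc-0-prop}; consider the minor-operation on $G_2$ witnessing $G_2\notin\Cc(\eg)$ (exists since $G_2\in\S$, property (C2)), i.e.\ some $\mu\in\M(G_2)$ with $\eg(\mu G_2)=\eg(G_2)$ and (being in $\dc1\egp$) $\egp(\mu G_2)<\egp(G_2)$. Applying Lemma~\ref{lm-part-eg}(ii) (since $\eeta=1$): $\eg(\mu G)<\eg(G)$ requires $\mu\in\dc1\egp\cup\dc2\eg$; $\mu\in\dc1\egp$ holds, so actually $\mu$ \emph{does} decrease $\eg(G)$ — no contradiction. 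Hmm; so (iii) must be forced differently. I suspect the real reason is a parity/Euler-genus obstruction specific to $\eg(G)=3$ odd combined with $G_1\in\Cc_0(\eg)$ meaning $\eg(G_1^+)$ even-ish; I would reexamine via orientable-simplicity or by re-deriving which $\eeta$ actually yield $\eg(G)=3$ versus forcing criticality to fail on some \emph{other} operation. This is the step I expect to be the main obstacle, and I would resolve it by a careful case split on $\et(G_1),\et(G_2)$ and, in the problematic case, by exhibiting an explicit non-decreasing minor-operation using the $\S$-properties (C2) of $G_2$ combined with the planarity of $G_1/xy$ to build a minor of $G$ of Euler genus $\ge3$. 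Property (iv), $\et(G_2)\le1$ when $G_1\notin\Cc_0(\eg)$ (so $\et(G_1)=1$), follows immediately from $\eeta(G_1,G_2)\le2$ since $\eeta=\et(G_1)+\et(G_2)=1+\et(G_2)\le2$.

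\medskip

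\noindent\textbf{Main obstacle.} The forward direction's routine part is the genus bookkeeping via Richter's formula and Lemma~\ref{lm-bounds-eg}; the genuinely delicate part is establishing the asymmetric conditions (iii) and (iv) in the converse — i.e.\ ruling out the combination $G_1\in\Cc_0(\eg)$ with $G_2\in\S_1$, and $G_1\notin\Cc_0(\eg)$ with $\et(G_2)=2$. The latter is handled cleanly by the $\eeta\le2$ bound; the former requires showing that even though $\eg(G)=3$ can be arranged, $G$ fails $\eg$-criticality on some minor-operation — this needs a hands-on argument using the cascade properties (C1)--(C3) of $G_2$ and the structure of $G_1$ (a near-$K_{3,3}$ with planar identification), and is where I expect to spend the most effort.
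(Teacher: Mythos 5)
Your ``if'' direction is essentially the paper's argument (bound $\eeta(G_1,G_2)\le 2$ via (iii)/(iv), apply Theorem~\ref{th-general-eg} for tightness of both parts, and Richter's formula for $\eg(G)=3$), and it goes through. The converse, however, has two genuine gaps.

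First, your claim that $\Cc_1(\eg)\subseteq\Cc_1(\egp)$ is false, and it is refuted by the paper's own census: $|\Cc_1(\eg)|=195$ while $|\Cc_1(\eg)\cap\Cc_1(\egp)|=95$. Lemma~\ref{lm-cc-1-eg} gives $\et(G)=0$ and $\dc2\eg\ss\dc1\egp$ for $G\in\Cc_1(\eg)$, but $\egp(G)=\eg(G)=2$ does not make $G$ $\egp$-critical: a minor-operation $\mu$ with $\eg(\mu G)=1$ may well have $\egp(\mu G)=2$. So after Theorem~\ref{th-general-eg} places $G_2$ in $\Cc_1(\eg)\cup\Cc_1(\egp)\cup\S_1$, you still must exclude $G_2\in\Cc_1(\eg)\sm\Cc_1(\egp)$. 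The correct argument: such a $G_2$ would admit (using $\Hw_1(\eg)=\emptyset$) a minor-operation $\mu\notin\dc2\eg\cup\dc1\egp$; then $\eh_1(\mu G)=\eh_1(G)=3$ and $\eh_0(\mu G)=\eg(G_1)+\eg(\mu G_2)+2\ge 0+1+2=3$, so $\eg(\mu G)=3=\eg(G)$ by Theorem~\ref{th-richter-euler}, contradicting the $\eg$-tightness of $G_2$.

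Second, for condition (iii) you explicitly concede defeat and speculate about a parity obstruction; that is not the mechanism. The reason your attempt fails is that you tested the wrong witness: you took the operation certifying (C2) (one not decreasing $\eg$), which indeed lies in $\dc1\egp$ and therefore does decrease $\eg(G)$. Use instead the operation certifying (C3): since $G_2\in\S_1$, there is $\mu\in\M(G_2)$ with $\egp(\mu G_2)=\egp(G_2)$, and $\mu\notin\dc2\eg$ either, since $\et(G_2)=1$ and (S1) give $\dc2\eg\ss\dc1\egp$. If $G_1\in\Cc_0(\eg)$ then $\et(G_1)=0$, $\eeta(G_1,G_2)=1$, and Lemma~\ref{lm-part-eg}(ii) says $\eg(\mu G)<\eg(G)$ would require $\mu\in\dc1\egp\cup\dc2\eg$; since $\mu$ is in neither, $G_2$ is not $\eg$-tight, the desired contradiction. (Equivalently: row $\eeta=1$ of Table~\ref{tb-general-eg} forces an $\eg$-tight $G_2$ into $\Cc(\egp)\cup\Hw(\eg)$, and a cascade lies in neither class.) Your treatment of (i), (ii) up to the inclusion issue, and (iv) otherwise matches the paper.
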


\begin{proof}
  Suppose that statements (i)--(iv) hold. Our goal is to show that $G \in \Cc_2(\eg)$.
  By Lemma~\ref{lm-tight}, it is enough to prove that $G_1$ and $G_2$ are $\eg$-tight in $G$ and that $\eg(G) = 3$.
  If $G_1 \in \Cc_0(\eg)$, then $\et(G_1) = 0$ by Lemma~\ref{lm-cc-0-prop}. Otherwise, $\et(G_1) = 1$ and $\et(G_2) \le 1$ by (iv).
  We conclude that in both cases we have $\eeta(G_1, G_2) \le 2$.
  Theorem~\ref{th-general-eg} and (i) give that $G_1$ is $\eg$-tight in $G$.
  If $\eeta(G_1, G_2) = 2$, then $G_2$ is $\eg$-tight in $G$ by Theorem~\ref{th-general-eg} and (ii).
  Suppose now that $\eeta(G_1, G_2) \le 1$ and $G_2 \in \S_1$.
  Since $\et(G_2) = 1$ by Lemma~\ref{lm-cascades}, we have that $\et(G_1) = 0$ and hence $G_1 \in \Cc_0(\eg)$ by Lemma~\ref{lm-cc-0-prop}.
  This is a contradiction with (iii).
  Thus, we may assume that $G_2 \in \Cc_1(\egp)$. Theorem~\ref{th-general-eg} asserts that $G_2$ is $\eg$-tight in $G$.
  Since $\eeta(G_1, G_2) \le 2$, $\egp(G_1) = 1$, and $\egp(G_2) = 2$, Theorem~\ref{th-richter-euler} and~(\ref{eq-eta-egp}) give that
  $$\eg(G) = \eh_1(G) = \egp(G_1) + \egp(G_2) = 3.$$
  Therefore, $G \in \Cc_2(\eg)$.

  We shall now show the converse, that is, for $G \in \Cc_2(\eg)$ where $\{x,y\}$ is a 2-vertex-cut, we find connected graphs $G_1, G_2 \in \Gcxy$ such that
  $G$ is an $xy$-sum of $G_1$ and $G_2$ and (i)--(iv) hold.
  Let us distribute the $\{x, y\}$-bridges arbitrarily into $G_1$ and $G_2$ so that $\egp(G_1) \le \egp(G_2)$
  and $G_1, G_2$ contain at least one of the bridges.
  By Lemma~\ref{lm-bounds-eg}, we have that $\egp(G_1) = 1$, $\egp(G_2) = 2$, and $\eeta(G_1, G_2) \le 2$.
  Since $\Hw_0(\eg)$ and $\S_0$ are empty (see Lemma~\ref{lm-cascades}),
  Theorem~\ref{th-general-eg} gives that $G_1 \in \Cc_0(\eg) \cup \Cc_0(\egp) = \Cc_0(\egp)$.
  Thus (i) holds.

  Since $\eeta(G_1, G_2) \le 2$, $G_2 \in \Cc(\eg) \cup \Cc(\egp) \cup \S \cup \Hw(\eg)$ by Theorem~\ref{th-general-eg}.
  By Lemma~\ref{lm-hoppers}, $\Hw_1(\eg)$ is empty.
  Since $\egp(G_2) = 2$, we have that $G_2 \not\in \Cc_0(\eg) \cup \Cc_0(\egp)$.
  We conclude that $G_2 \in \Cc_1(\eg) \cup \Cc_1(\egp) \cup \S_1$.
  Assume for a contradiction that $G_2 \in \Cc_1(\eg) \sm \Cc_1(\egp)$.
  Thus there exists a minor-operation $\mu \in \M(G_2)$ such that $\mu \not\in \dc2 \eg \cup \dc1 \egp$ since $\Hw_1(\eg)$ is empty. 
  By~(\ref{eq-eg-1}), $\eh_1(\mu G) = \eh_1(G)$. Since $G_2$ is $\eg$-tight in $G$, Theorem~\ref{th-richter-euler} gives:
  $$3 = \eg(G) > \eg(\mu G) = \eh_0(\mu G) = \eg(G_1) + \eg(\mu G_2) + 2 \ge 3.$$
  This contradicts our assumption that $G_2 \not\in \Cc_1(\egp) \cup \S_1$.
  We conclude that (ii) holds.

  Suppose that $G_1 \in \Cc_0(\eg)$ and $G_2 \in \S_1$.
  Since $\et(G_1) = 0$ and $\et(G_2) = 1$ by Lemmas~\ref{lm-cascades} and~\ref{lm-cc-0-prop}, we have that $\eeta(G_1, G_2) = 1$.
  This contradicts Theorem~\ref{th-general-eg}.
  Thus (iii) holds.

  In order to show (iv), suppose that $G_1 \not\in \Cc_0(\eg)$ and $\et(G_2) = 2$.
  Then $\et(G_1) = 1$ by Lemma~\ref{lm-cc-0-prop} and thus $\eeta(G_1, G_2) = 3$.
  This contradicts Lemma~\ref{lm-bounds-eg}(iii).
  We conclude that (iv) holds.
\end{proof}

We also have a corresponding theorem that classifies the $xy$-sums in $\Cc_2(\egp)$.

\begin{theorem}
\label{th-cc-2-egp}
  Let $G$ be the $xy$-sum of connected graphs $G_1, G_2 \in \Gcxy$.
  If the following statements (i) and (ii) hold, then $G \in \Cc_2(\egp)$.
  \begin{enumerate}[label=\rm(\roman*)]
  \item
    $G_1 \in \Cc_0(\egp)$.
  \item
    $G_2 \in \Cc_1(\egp)$.
  \end{enumerate}
  Conversely, every 2-connected graph $G \in \Cc_2(\egp)$ such that $\{x,y\}$ is a 2-vertex-cut can be obtained this way.
\end{theorem}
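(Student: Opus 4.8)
The plan is to mirror the proof of Theorem~\ref{th-cc-2-eg}, only the argument here is shorter because the characterization of $\egp$-tight parts in Theorem~\ref{th-general-egp} is independent of the value of $\eeta(G_1,G_2)$. The two halves of the statement are handled almost symmetrically, using Lemma~\ref{lm-tight} to pass between criticality of $G$ and tightness of its parts, and Theorem~\ref{th-general-egp} to translate $\egp$-tightness of a part into membership in $\Cc(\egp)$.

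For the ``if'' direction, assume (i) and (ii). Since $E(G_1)\cup E(G_2)=E(G)$, Lemma~\ref{lm-tight} applied with $\P=\egp$ reduces the assertion ``$G$ is $\egp$-critical'' to showing that $G_1$ and $G_2$ are both $\egp$-tight in $G$; by Theorem~\ref{th-general-egp} this is equivalent to $G_1\in\Cc(\egp)$ and $G_2\in\Cc(\egp)$, and these hold because $\Cc_0(\egp)\subseteq\Cc(\egp)$ and $\Cc_1(\egp)\subseteq\Cc(\egp)$. It remains to pin down the value of the parameter: as $G_1\in\Cc_0(\egp)$ and $G_2\in\Cc_1(\egp)$ we have $\egp(G_1)=1$ and $\egp(G_2)=2$, so Theorem~\ref{th-richter-euler}(ii) and the definition of $\eh_1$ give $\egp(G)=\eh_1(G)=\egp(G_1)+\egp(G_2)=3$. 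Hence $G\in\C_2(\egp)$, and since $G\in\Gcxy$ (neither part contains the edge $xy$) we conclude $G\in\Cc_2(\egp)$.

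For the converse, let $G\in\Cc_2(\egp)$ be $2$-connected with $\{x,y\}$ a $2$-vertex-cut. Because $G$ contains no edge $xy$ and is $2$-connected, $G-\{x,y\}$ has at least two components and each $\{x,y\}$-bridge attaches to both $x$ and $y$; therefore the bridges can be distributed into two nonempty families whose unions together with $\{x,y\}$ form connected graphs $G_1,G_2\in\Gcxy$ with $G$ their $xy$-sum. Choosing the distribution so that $\egp(G_1)\le\egp(G_2)$, Lemma~\ref{lm-bounds-egp} yields $\egp(G_1)=1$ and $\egp(G_2)=2$. Since $G$ is $\egp$-critical, Lemma~\ref{lm-tight} forces both $G_1$ and $G_2$ to be $\egp$-tight in $G$, whence $G_1,G_2\in\Cc(\egp)$ by Theorem~\ref{th-general-egp}; combined with the genus values just obtained, $G_1\in\Cc_0(\egp)$ and $G_2\in\Cc_1(\egp)$, which are precisely (i) and (ii).

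The honest remark is that there is essentially no obstacle internal to this proof: all the substance sits in the lemmas it invokes --- Theorem~\ref{th-general-egp} (via Lemmas~\ref{lm-part-egp} and~\ref{lm-no-cutedges}) and the bounding Lemma~\ref{lm-bounds-egp} (which in turn leans on the emptiness of $\H_1(\egp)$ proved in Lemma~\ref{lm-hoppers}). The points that still need a moment's care are purely bookkeeping: confirming that the chosen split of bridges produces \emph{connected} parts lying in $\Gcxy$, that at least two $\{x,y\}$-bridges exist so the split is genuinely into two nonempty parts, and noting $\Cc_k(\egp)\subseteq\Cc(\egp)$ so that the ``if'' and ``only if'' directions line up exactly.
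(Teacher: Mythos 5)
Your proof is correct and follows essentially the same route as the paper's: both directions rely on Theorem~\ref{th-general-egp} together with Lemma~\ref{lm-tight} to pass between $\egp$-tightness of parts and membership in $\Cc(\egp)$, then pin down the genus values via Theorem~\ref{th-richter-euler} in the ``if'' direction and via Lemma~\ref{lm-bounds-egp} in the converse. The extra remarks about the bridge decomposition and $\Cc_k(\egp)\subseteq\Cc(\egp)$ are harmless bookkeeping left implicit in the paper.
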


\begin{proof}
  Suppose that (i) and (ii) hold.
  By Theorem~\ref{th-general-egp}, $G_1$ and $G_2$ are $\egp$-tight in $G$.
  Thus $G \in \Cc(\egp)$ by Lemma~\ref{lm-tight}. By Theorem~\ref{th-richter-euler},
  $$\egp(G) = \eh_1(G) = \egp(G_1) + \egp(G_2) = 3.$$
  Therefore, $G \in \Cc_2(\egp)$.

  For the converse, let $G_1$ and $G_2$ be collections of $\{x, y\}$-bridges in $G$ such that $G$ is the $xy$-sum of $G_1$ and $G_2$, $\egp(G_1) \le \egp(G_2)$, and $G_1, G_2$ contain at least one of the bridges.
  We shall show that (i) and (ii) hold.
  By Theorem~\ref{th-general-egp}, $G_1, G_2 \in \Cc(\egp)$.
  By Lemma~\ref{lm-bounds-egp}, $\egp(G_1) = 1$ and $\egp(G_2) = 2$.
  We conclude that $G_1 \in \Cc_0(\egp)$ and $G_2 \in \Cc_1(\egp)$ and thus (i) and (ii) hold.
\end{proof}

The following lemma gives necessary and sufficient conditions for the edge $xy$ to be $\eg$-tight
in a graph with a 2-vertex-cut $\{x, y\}$ and the edge $xy$.

\begin{lemma}
\label{lm-edge-xy-eg}
  Let $G$ be an $xy$-sum of connected graphs $G_1, G_2 \in \Gcxy$ and let $H = \hat{G\+}$.
  Then the subgraph of $H$ consisting of the edge $xy$ is $\eg$-tight in $H$
  if and only if $\eeta(G_1, G_2) > 2$ and either $\eg(G_1 /xy) < \egp(G_1)$ or $\eg(G_2 / xy) < \egp(G_2)$,
\end{lemma}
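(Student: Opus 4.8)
The plan is to unwind the definition of $\eg$-tightness for the one-edge subgraph and then feed it through Richter's formula (Theorem~\ref{th-richter-euler}) together with the additivity of the Euler genus over $1$-sums (Theorem~\ref{th-stahl-euler}). First I would note that the subgraph of $H$ consisting of the single edge $xy$ has $\M = \{(xy,-),(xy,/)\}$, so by definition it is $\eg$-tight in $H$ precisely when both $\eg(H-xy) < \eg(H)$ and $\eg(H/xy) < \eg(H)$. It remains to translate each of these two inequalities into the language of $G_1$ and $G_2$.

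For the first one, since $H = \hat{G\+}$ we have $\eg(H) = \egp(G)$, and since $H-xy \iso \hat{G}$ we have $\eg(H-xy) = \eg(G)$; hence $\eg(H-xy) < \eg(H)$ is equivalent to $\et(G) > 0$. By Theorem~\ref{th-richter-euler}(iii), $\et(G) = \max\{\eh_1(G)-\eh_0(G),0\}$, and by~(\ref{eq-thetas}) together with~(\ref{eq-eg-0}) we have $\eh_1(G)-\eh_0(G) = \eeta(G_1,G_2)-2$; so $\et(G) > 0$ if and only if $\eeta(G_1,G_2) > 2$. This handles the first half of the claimed equivalence.

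For the second inequality I would use that $H/xy \iso G/xy$, and that identifying $x$ with $y$ in the $xy$-sum $G$ yields, up to the (genus-preserving) replacement of parallel edges by single edges, the $1$-sum of $G_1/xy$ and $G_2/xy$ at the identified vertex. By Theorem~\ref{th-stahl-euler} the Euler genus is additive over $1$-sums, so $\eg(H/xy) = \eg(G_1/xy) + \eg(G_2/xy)$. On the other hand, Theorem~\ref{th-richter-euler}(ii) gives $\eg(H) = \egp(G) = \egp(G_1) + \egp(G_2)$. Since $G_i/xy$ is a minor of $G_i\+$ (contract $xy$) and $\eg$ is minor-monotone, $\eg(G_i/xy) \le \egp(G_i)$ for $i=1,2$; therefore $\eg(H/xy) < \eg(H)$ holds if and only if at least one of $\eg(G_1/xy) < \egp(G_1)$, $\eg(G_2/xy) < \egp(G_2)$ holds. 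Combining the two translations gives exactly the statement.

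The argument is not deep; the only points that need care — and the closest thing to an obstacle — are the bookkeeping identifications $\eg(H) = \egp(G)$, $H-xy \iso \hat{G}$, $H/xy \iso G/xy$, and the decomposition of $G/xy$ as a $1$-sum of $G_1/xy$ and $G_2/xy$, each of which must be checked against the conventions for graphs with terminals and the convention of suppressing multiple edges after contraction. Once these are in place, the equivalence follows by substituting into Theorems~\ref{th-stahl-euler} and~\ref{th-richter-euler} as above.
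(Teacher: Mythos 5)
Your proof is correct and follows essentially the same route as the paper's: unwind $\eg$-tightness of the one-edge subgraph into the two inequalities $\eg(H-xy)<\eg(H)$ and $\eg(H/xy)<\eg(H)$, translate the first via Theorem~\ref{th-richter-euler} into $\eeta(G_1,G_2)>2$, and the second via Theorem~\ref{th-stahl-euler} applied to the $1$-sum $G_1/xy \cup G_2/xy$ together with $\egp(G)=\egp(G_1)+\egp(G_2)$ and minor-monotonicity. The only cosmetic difference is that the paper first establishes the $\eeta>2$ equivalence and then conditions on it, whereas you prove the two equivalences independently and conjoin them, which is logically the same.
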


\begin{proof}
  Since $\eg(H) = \egp(G) = \eg(G) + \et(G)$ and $\eg(H - xy) = \eg(G)$, we have that
  $\eg(H - xy) < \eg(H)$ if and only if $\et(G) > 0$.
  Theorem~\ref{th-richter-euler} gives that $\et(G) > 0$ if and only if $\eeta(G_1, G_2) > 2$.
  Thus we may assume below that $\eeta(G_1, G_2) > 2$.

  By Theorem~\ref{th-stahl-euler}, $\eg(H /xy) = \eg(G_1/xy) + \eg(G_2/xy)$.
  Since $\eg(G_1/xy) \le \egp(G_1)$ and $\eg(G_2 /xy) \le \egp(G_2)$, we have that
  $\eg(H/xy) < \eg(H)$ if and only if either $\eg(G_1/xy) < \egp(G_1)$ or $\eg(G_2/xy) < \egp(G_2)$.
\end{proof}

We conclude this section by characterizing the graphs of connectivity 2 in $\E_2$.

\begin{theorem}
  \label{th-e-2}
  Let $G$ be an $xy$-sum of connected graphs $G_1, G_2 \in \Gcxy$ such that the following holds:
  \begin{enumerate}[label=\rm(\roman*)]
  \item
    $G_1 \in \Cc_0(\egp)$.
  \item
    $G_2 \in \Cc_1(\egp) \cup \S_1$.
  \item
    If $G_1 \in \Cc_0(\eg)$, then $G_2 \in \Cc_1(\egp)$.
  \end{enumerate}
  If $\eeta(G_1, G_2) \le 2$, then $\hat{G} \in \E_2$.
  If $\eeta(G_1, G_2) > 2$, then $\hat{G\+} \in \E_2$.
  Furthermore, each graph in $\E_2$ of connectivity 2 is constructed this way.
\end{theorem}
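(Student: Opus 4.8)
The plan is to deduce Theorem~\ref{th-e-2} from the two structure theorems for $\Cc_2(\eg)$ (Theorem~\ref{th-cc-2-eg}) and $\Cc_2(\egp)$ (Theorem~\ref{th-cc-2-egp}), together with the dictionary between terminal-critical classes and $\E$ supplied by Lemmas~\ref{lm-cc-eg} and~\ref{lm-cc-egp-iff}, the properties of $\Cc_0(\egp)$ from Lemma~\ref{lm-cc-0-prop}, and the edge-$xy$ tightness criterion of Lemma~\ref{lm-edge-xy-eg}. The whole point is that the threshold $\eeta(G_1,G_2)\le2$ versus $\eeta(G_1,G_2)>2$ is exactly $\et(G)=0$ versus $\et(G)>0$, i.e.\ whether the excluded minor we build misses or contains the edge $xy$.

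\textbf{Sufficiency.} Assume (i)--(iii). If $\eeta(G_1,G_2)\le2$, I would check the hypotheses of Theorem~\ref{th-cc-2-eg}: its (i)--(iii) are ours, and its (iv) holds because, by Lemma~\ref{lm-cc-0-prop}, $G_1\notin\Cc_0(\eg)$ gives $\et(G_1)=1$, so $\et(G_2)=\eeta(G_1,G_2)-1\le1$. Hence $G\in\Cc_2(\eg)$, and Lemma~\ref{lm-cc-eg} (tracking the Euler genus, which equals $3$) gives $\hat G\in\E_2$. If $\eeta(G_1,G_2)>2$, then $G_2$ cannot be a cascade, since a cascade has $\et=1$ (Lemma~\ref{lm-cascades}) while $\et(G_1)\le1$ (Lemma~\ref{lm-cc-0-prop}), which would force $\eeta(G_1,G_2)\le2$; so $G_2\in\Cc_1(\egp)$ and Theorem~\ref{th-cc-2-egp} gives $G\in\Cc_2(\egp)$. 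To apply Lemma~\ref{lm-cc-egp-iff} I still need $\et(G)>0$ and $\eg(G/xy)<\egp(G)$: the former holds since $\et(G)=\max\{\eeta(G_1,G_2)-2,0\}$ by Theorem~\ref{th-richter-euler}(iii); for the latter, $\egp(G)=\egp(G_1)+\egp(G_2)=1+2=3$ by Theorem~\ref{th-richter-euler}, while $\eg(G/xy)=\eg(G_1/xy)+\eg(G_2/xy)$ by Theorem~\ref{th-stahl-euler}, with $\eg(G_1/xy)=0$ because $G_1/xy$ is planar (Lemma~\ref{lm-cc-0-prop}) and $\eg(G_2/xy)\le\egp(G_2)=2$. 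So $\eg(G/xy)\le2<3=\egp(G)$, and Lemma~\ref{lm-cc-egp-iff} (again tracking the genus) gives $\hat{G\+}\in\E_2$.

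\textbf{Necessity.} Let $H\in\E_2$ have connectivity $2$ and fix a $2$-vertex-cut $\{x,y\}$ of $H$. If $xy\notin E(H)$, view $H$ as a graph $G\in\Gcxy$ with terminals $x,y$, so $\hat G=H$; then $G\in\Cc(\eg)$ by Lemma~\ref{lm-cc-eg} and $\eg(G)=3$, so $G\in\Cc_2(\eg)$, and since $G$ is $2$-connected with $\{x,y\}$ a $2$-vertex-cut the converse part of Theorem~\ref{th-cc-2-eg} writes $G$ as an $xy$-sum satisfying (i)--(iii), with $\eeta(G_1,G_2)\le2$ by Lemma~\ref{lm-bounds-eg}(iii); thus $H=\hat G$ is obtained as claimed. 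If $xy\in E(H)$, set $G=H-xy\in\Gcxy$, so $\hat{G\+}=H$. Here I would first argue that $G$ is $2$-connected and that $\{x,y\}$ is still a $2$-vertex-cut of $G$: $H-x-y$ has at least two components, each joined in $H$ to both $x$ and $y$, from which it follows that deleting $xy$ and then deleting any single vertex keeps the graph connected, while $G-x-y=H-x-y$ is disconnected. Then $\egp(G)=\eg(H)=3$ and $\egp(\mu G)=\eg(\mu H)\le2$ for every $\mu\in\M(G)$, so $G\in\Cc_2(\egp)$; the converse part of Theorem~\ref{th-cc-2-egp} writes $G$ as an $xy$-sum with $G_1\in\Cc_0(\egp)$ and $G_2\in\Cc_1(\egp)$, so (i)--(iii) hold. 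Finally, $H\in\E_2$ gives $\eg(H-xy)\le2$ and $\eg(H/xy)\le2$, both strictly less than $\eg(H)=3$, so the one-edge subgraph $xy$ is $\eg$-tight in $H=\hat{G\+}$, and Lemma~\ref{lm-edge-xy-eg} forces $\eeta(G_1,G_2)>2$; thus $H=\hat{G\+}$ is obtained as claimed.

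\textbf{Main obstacle.} Nearly everything is parameter bookkeeping on top of Theorem~\ref{th-richter-euler} and the already-proved classifications. The two steps that require real care are: (a) in the necessity proof with $xy\in E(H)$, verifying that $H-xy$ is still $2$-connected with $\{x,y\}$ a $2$-vertex-cut, which is exactly where ``connectivity \emph{exactly} $2$'' is used; and (b) making the threshold for $\eeta$ come out as exactly $2$ in both directions --- in sufficiency this is the identity $\et(G)=\max\{\eeta(G_1,G_2)-2,0\}$, and in necessity it is the reverse implication of Lemma~\ref{lm-edge-xy-eg}.
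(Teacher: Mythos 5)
Your sufficiency argument is correct. In the case $\eeta(G_1,G_2)\le 2$ you follow the paper closely (checking hypotheses (i)--(iv) of Theorem~\ref{th-cc-2-eg}). In the case $\eeta(G_1,G_2)>2$ you take a mildly different route: the paper shows that both $\hat G$ and the single edge $xy$ are $\eg$-tight in $\hat{G\+}$ and then invokes Lemma~\ref{lm-tight} together with Lemma~\ref{lm-edge-xy-eg}, whereas you go directly through Lemma~\ref{lm-cc-egp-iff}. Both are legitimate, and your computation $\eg(G/xy)=\eg(G_1/xy)+\eg(G_2/xy)\le 0+2<3$ via Stahl--Beineke and Lemma~\ref{lm-cc-0-prop} is exactly what is needed. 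Your handling of why $G_2$ cannot be a cascade when $\eeta>2$ is also correct.

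However, your necessity argument has a genuine gap. You write ``then $G\in\Cc(\eg)$ by Lemma~\ref{lm-cc-eg} and $\eg(G)=3$, so $G\in\Cc_2(\eg)$'' (and the analogous assertion $\egp(G)=3$ in the $xy\in E(H)$ case), but $H\in\E_2$ only gives $\eg(H)\ge 3$; the definition of $\E_2$ does not rule out $\eg(H)\ge 4$, which would happen precisely if every minor-operation drops the Euler genus by at least $2$. Ruling this out is the hardest part of the paper's necessity proof: for each of the two subcases, the paper supposes $\eg(G)>3$ (respectively $\egp(G)>3$), uses Theorem~\ref{th-richter-euler} to pin down $\egp(G_1)=\egp(G_2)=2$, and then shows that $\M(G_1)=\dc2\egp$, so $G_1\in\H_1(\egp)$, contradicting Lemma~\ref{lm-hoppers}. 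Without this step you cannot conclude $G\in\Cc_2(\eg)$ or $G\in\Cc_2(\egp)$, and the converses of Theorems~\ref{th-cc-2-eg} and~\ref{th-cc-2-egp} are not applicable. You correctly flag the $2$-connectedness of $H-xy$ as a point needing care, but the missing hopper-exclusion argument is the more substantial obstacle, and it is where Lemma~\ref{lm-hoppers} gets used in the necessity direction.
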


\begin{proof}
  Assume first that $\eeta(G_1, G_2) \le 2$.
  By Lemma~\ref{lm-cc-eg}, it is sufficient to show that $G_1$ and $G_2$ satisfy the conditions (i)--(iv) of Theorem~\ref{th-cc-2-eg}.
  The conditions (i)--(iii) of Theorem~\ref{th-cc-2-eg} are the same as the assumptions of this theorem.
  If $G_1 \not\in \Cc_1(\eg)$, then $\et(G_1) = 1$ by Lemma~\ref{lm-cc-0-prop}.
  Since $\eeta(G_1, G_2) \le 2$, we have that $\et(G_2) \le 1$ and (iv) holds.
  By Theorem~\ref{th-cc-2-eg}, $G \in \Cc_2(\eg)$. By Lemma~\ref{lm-cc-eg}, $\hat{G} \in \E_2$.

  Assume now that $\eeta(G_1, G_2) > 2$.
  Since, for each graph $G \in \Cc_0(\egp) \cup \S_1$, $\et(G) \le 1$, by Lemmas~\ref{lm-cascades} and~\ref{lm-cc-0-prop},
  we conclude that $\eeta(G_1, G_2) = 3$, $\et(G_1) = 1$, $\et(G_2) = 2$, $G_1 \not\in \Cc_0(\eg)$, and $G_2 \in \Cc_1(\egp)$.
  By Theorem~\ref{th-cc-2-egp}, $G \in \Cc_2(\egp)$.
  Note that this implies that $\hat{G}$ is $\eg$-tight in $\hat{G\+}$.
  Since $\egp(G_1/xy) < \egp(G_1)$ (Lemma~\ref{lm-cc-0-prop}), we obtain that $xy$ is $\eg$-tight in $\hat{G\+}$ by Lemma~\ref{lm-edge-xy-eg}.
  Since $\eg(\hat{G\+}) = \egp(G) = 3$, $\hat{G\+} \in \E_2$ by Lemma~\ref{lm-tight}.

  Let us now prove that each $H \in \E_2$ of connectivity 2 is constructed this way.
  Pick an arbitrary 2-vertex-cut $\{x, y\}$ of $H$.
  Suppose first that $xy \in E(H)$.
  Consider $G = H-xy$ as a graph in $\Gcxy$.
  Since $\M(G) \ss \M(H)$, we have that $\M(G) = \dc1 \egp$ and $G \in \Cc(\egp)$.
  Suppose that $\egp(G) > 3$. Let $G_1$ and $G_2$ be parts of $G$ such that $\egp(G_1) \le \egp(G_2)$.
  If $\egp(G_2) > 2$, then for any minor-operation $\mu \in \M(G_1)$, the graph $\mu G$ has $G_2^+$ as a minor.
  Hence $\eg(\mu H) \ge \egp(G_2) > 2$, a contradiction. Therefore, $\egp(G_2) \le 2$.
  By Theorem~\ref{th-richter-euler}, $\egp(G_1) = \egp(G_2) = 2$.
  Let $\mu \in \M(G_1)$. By Theorem~\ref{th-richter-euler},
  $2 \ge \eg(\mu H) = \egp(\mu G) = \egp(\mu G_1) + \egp(G_2)$.
  Hence $\egp(\mu G_1) = 0$. We conclude that $\M(G_1) = \dc2 \egp$ and $G_1 \in \H_1(\egp)$.
  By Lemma~\ref{lm-hoppers}, $\H_1(\egp)$ is empty, a contradiction.

  So we may assume that $\egp(G) = 3$ and thus $G \in \Cc_2(\egp)$.
  By Theorem~\ref{th-cc-2-egp}, $G$ is an $xy$-sum of graphs $G_1 \in \Cc_0(\egp)$ and $G_2 \in \Cc_1(\egp)$.
  By Lemma~\ref{lm-edge-xy-eg}, $\eeta(G_1, G_2) > 2$.
  Thus $G$ satisfies the conditions (i)--(iii) of the theorem.

  Suppose now that $xy \not\in E(H)$. Consider $G = H$ as a graph in $\Gcxy$.
  By Lemma~\ref{lm-cc-eg}, $G \in \Cc(\eg)$.
  Suppose that $\eg(G) > 3$. Let $G_1$ and $G_2$ be parts of $G$ such that $\egp(G_1) \le \egp(G_2)$.
  If $\egp(G_2) > 2$, then for any minor-operation $\mu \in \M(G_1)$ so that $\mu G_1$ is connected, the graph $\mu G$ has $G_2^+$ as a minor.
  Hence $\eg(\mu H) \ge \egp(G_2) > 2$, a contradiction. Therefore, $\egp(G_2) \le 2$.
  By Theorem~\ref{th-richter-euler}, $\egp(G_1) = \egp(G_2) = 2$ and $3 < \eg(G) \le \eh_0(G) = \eg(G_1) + \eg(G_2) + 2$.
  We may assume that $\eg(G_1) \le \eg(G_2)$ and so $\eg(G_2) \ge 1$.
  Let $\mu \in \M(G_1)$. By Theorem~\ref{th-richter-euler},
  $$2 \ge \eg(\mu H) = \eg(\mu G) = \min\{\eh_0(\mu G), \eh_1(\mu G)\}.$$
  Since $\eh_0(\mu G) = \eg(\mu G_1) + \eg(G_2) + 2 \ge 3$, we have that
  $2 \ge \eh_1(\mu G) = \egp(\mu G_1) + \egp(G_2)$.
  We conclude that $\egp(\mu G_1) = 0$ and $\mu \in \dc2 \egp$. Since $\mu$ was arbitrary, $G_1 \in \H_1(\egp)$.
  This contradicts Lemma~\ref{lm-hoppers} which asserts that $\H_1(\egp)$ is empty.

  Thus we may assume that $\eg(G) = 3$ and thus $G \in \Cc_2(\eg)$.
  By Theorem~\ref{th-cc-2-eg}, $G$ is an $xy$-sum of graphs $G_1 \in \Cc_0(\egp)$ and $G_2 \in \Cc_1(\egp) \cup \S_1$
  and either $G_1 \not\in \Cc_0(\eg)$ or $G_2 \not\in \S_1$.
  If $G_1 \in \Cc_0(\eg)$, then $\et(G_1) = 0$ by Lemma~\ref{lm-cc-0-prop} and thus $\eeta(G_1, G_2) \le 2$.
  Otherwise, $\et(G_1) \le 1$ and $\et(G_2) \le 1$ by Theorem~\ref{th-cc-2-eg}(iv) and we obtain that $\eeta(G_1, G_2) \le 2$.
  Thus $G$ satisfies the conditions (i)--(iii) of the theorem.
\end{proof}

As a corollary we can construct the complete list of graphs in $\E_2$ of connectivity~2.
The completeness of the list relies on the classification of cascades in Part II \cite{MS2014_II} of our work, where it is shown that $\S_1$ contains precisely 21 cascades.

\begin{corollary}
  \label{cr-e-2}
  There are precisely $668$ graphs of connectivity 2 that are critical for Euler genus 2. 
\end{corollary}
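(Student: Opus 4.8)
The plan is to read off the list directly from Theorem~\ref{th-e-2} and count it by computer. By that theorem, a graph $H$ of connectivity~$2$ lies in $\E_2$ if and only if it is obtained from some $xy$-sum $G$ of connected graphs $G_1 \in \Cc_0(\egp)$ and $G_2 \in \Cc_1(\egp) \cup \S_1$ satisfying condition~(iii) of the theorem, where one takes $H = \hat{G}$ when $\eeta(G_1,G_2) \le 2$ and $H = \hat{G\+}$ when $\eeta(G_1,G_2) > 2$ (the latter alternative just records whether the edge $xy$ is present in $H$). Thus the enumeration reduces to a finite search over pairs of building blocks.

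First I would assemble the three ingredient lists. By Lemma~\ref{lm-cc-0}, $\Cc_0(\egp)$ consists of the three graphs of Fig.~\ref{fg-cc-0-egp}; by Lemma~\ref{lm-cc-0-prop} exactly one of them, the $K_{3,3}$ with nonadjacent terminals, lies in $\Cc_0(\eg)$ and has $\et = 0$, while the other two have $\et = 1$. The class $\Cc_1(\egp)$ was constructed earlier in this section from $\E^*_1$ via Lemma~\ref{lm-cc-egp}; restricted to graphs whose $G\+$ is $2$-connected it has $227$ labelled members. Finally $\S_1$ consists of the $21$ cascades of Part~II~\cite{MS2014_II}, all with $G\+$ $2$-connected. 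Since a connectivity-$2$ graph is $2$-connected, only parts $G_i$ with $G_i\+$ $2$-connected can actually occur, so these are the correct lists; all graphs carry two labelled terminals.

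Next I would run the search. For every admissible pair $(G_1,G_2)$ — there are $2\cdot(227+21)$ of them with $G_1\notin \Cc_0(\eg)$ and, by condition~(iii), a further $1\cdot 227$ with $G_1\in\Cc_0(\eg)$, hence $723$ in total — form the $xy$-sum over the labelled terminals, compute $\eeta(G_1,G_2) = \et(G_1)+\et(G_2)$, adjoin the edge $xy$ precisely when $\eeta(G_1,G_2) > 2$, and record the resulting terminal-free simple graph. Because only the unlabelled pair $\{x,y\}$ matters, a graph may have several $2$-vertex-cuts, and a part is determined only up to the $x\leftrightarrow y$ symmetry, distinct pairs can produce isomorphic graphs; so the last step is to compute a canonical form of each recorded graph and count the distinct isomorphism classes. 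This count equals $668$, and Theorem~\ref{th-e-2} guarantees simultaneously that each recorded graph belongs to $\E_2$ and that every connectivity-$2$ member of $\E_2$ is recorded, so the list is complete.

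The mathematics is already finished inside Theorem~\ref{th-e-2}, so the only real obstacle is bookkeeping: one must be certain that the $227$-graph list of those $G \in \Cc_1(\egp)$ with $G\+$ $2$-connected and the $21$-graph list for $\S_1$ are correct (the latter being the main external input, established in Part~II~\cite{MS2014_II}), that restricting to $2$-connected $G_i\+$ loses nothing for connectivity-$2$ targets, and that the isomorphism reduction correctly merges graphs arising from several splittings or from terminal swaps. This is routine but tedious by hand, and the verification yielding the number $668$ was carried out by computer.
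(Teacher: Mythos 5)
Your proposal is correct and follows essentially the same route as the paper: it reduces the enumeration to the finite search dictated by Theorem~\ref{th-e-2}, assembles the same ingredient lists (the $3$ graphs of $\Cc_0(\egp)$, the $227$ graphs of $\Cc_1(\egp)$ with $2$-connected $G\+$, and the $21$ cascades of $\S_1$), arrives at the same $723$ admissible pairs, and relies on computer verification for the final isomorphism count of $668$. The only cosmetic difference is that the paper spells out the isomorphism reduction more explicitly (noting that collisions arise only when $G_2\+$ has connectivity~$2$, counting $39+4$ such $G_2$, and recording that the resulting $125$ pairs collapse to $70$ graphs), whereas you simply canonicalize everything — a sound but less informative shortcut to the same computation.
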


\begin{proof}
  Let us begin by counting the number of pairs $G_1, G_2$ that satisfy the conditions (i)--(iii) of Theorem~\ref{th-e-2}.
  There are 3 graphs in $\Cc_0(\egp)$, there are 227 graphs $G_2$ in $\Cc_1(\egp)$ such that $G_2^+$ is 2-connected, and there are $21$ graphs in $\S_1$ (for each $G \in \S_1$, the graph $G^+$ is 2-connected). 
  That gives $744$ pairs since $|\Cc_0(\eg)|=1$ and $|\S_1|=21$. 
  There are only $744-21=723$ pairs that satisfy the condition (iii) of Theorem~\ref{th-e-2} that either $G_1 \not\in \Cc_0(\eg)$ or $G_2 \not\in \S_1$.  

  Let $G_1, G_2 \in \Gcxy$.
  There are two $xy$-sums that have parts isomorphic to $G_1$ and $G_2$ as there are two ways how to identify two pairs of vertices.
  If $G_1 \in \Cc_0(\egp)$, then there is an automorphism of $G_1$ exchanging the terminals. Hence there is only a single non-isomorphic $xy$-sum $G$
  that has parts $G_1 \in \Cc_0(\egp)$ and $G_2 \in \Cc_1(\egp) \cup \S_1$.
  Since $\eeta(G_1, G_2)$ depends only on $G_1$ and $G_2$, precisely one of $\hat{G}, \hat{G\+}$ belongs to $\E_2$.
  There may be more pairs $G_1, G_2$ giving the same graph $H \in \E_2$ though.

  Let $H \in \E_2$ have connectivity~2. By Theorem~\ref{th-e-2}, there exists an $xy$-sum $G$ of connected graphs $G_1$ and $G_2$
  such that either $\hat{G} \iso H$ or $\hat{G\+} \iso H$.
  Note that $G_1^+$ and $G_2^+$ are 2-connected.
  Suppose that $H$ admits a nontrivial automorphism $\psi$ such that $\psi(V(G_1)) \not= V(G_1)$ (otherwise, it is just a combination of two automorphisms of $G_1$ and $G_2$).
  It is not hard to see that if $G_2^+$ is 3-connected, each automorphism of $H$ is trivial.
  Therefore, we need to study graphs $G_2 \in \Cc_1(\egp) \cup \S_1$ such that $G_2^+$ has connectivity~2.

  There are 39 graphs $G_2$ in $\Cc_1(\egp)$ such that $G_2^+$ has connectivity 2 and there are 4 graphs $G_2$ in $\S_1$ such that $G_2^+$ has connectivity 2 (see Part II \cite[Lemma 7.10]{MS2014_II} for details). 
  It is not hard to check that the $125$ pairs 
  with $G_1 \in \Cc_0(\egp)$ make only 70 non-isomorphic graphs in $\E_2$. 
  We conclude that there are $668$ graphs of connectivity 2 in $\E_2$. 
\end{proof}

\section{The Klein bottle}
\label{sc-klein-klein}

In this section, we characterize the obstructions of connectivity 2 for embedding graphs into the Klein bottle. We show in fact that they are the same as the 668 critical graphs for Euler genus 2 (Corollary \ref{cr-e-2}).

Let us introduce graph parameters $\os$ and $\osp$ that capture the property of being orientably simple.
Let $\os = \ng - \eg$ and let $\osp = \ngp - \egp$.
Note that $\os(G) = 1$ if $G$ is orientably simple and $\os(G) = 0$ otherwise.

The following lemma is an easy consequence of Lemma~\ref{lm-ng-upper}.

\begin{lemma}
\label{lm-non-simple}
If\/ $\egp(G)$ is odd, then $\osp(G) = 0$.
\end{lemma}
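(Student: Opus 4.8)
The plan is to derive this directly from the companion inequality in Lemma~\ref{lm-ng-upper} applied to the graph $G\+$, together with the parity constraint that being orientably simple imposes on the Euler genus. First I would recall that $\osp(G) = \ngp(G) - \egp(G)$ takes only the values $0$ or $1$, and that $\osp(G) = 1$ precisely when $G\+$ is orientably simple. So the whole content is to show that $G\+$ cannot be orientably simple when $\egp(G)$ is odd.

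Next I would invoke the observation made right after Lemma~\ref{lm-ng-upper}: if a graph $H$ is orientably simple, then $\eg(H) = 2g(H) = \ng(H) - 1$; in particular $\eg(H)$ is \emph{even}. Applying this with $H = \hat{G\+}$, orientable simplicity of $G\+$ forces $\egp(G) = \eg(\hat{G\+})$ to be even. Contrapositively, if $\egp(G)$ is odd then $G\+$ is not orientably simple, i.e. $\ngp(G) \ne \egp(G) + 1$. Combined with $\ng \le 2g+1$ rewritten in Euler-genus terms — namely $\ngp(G) \le \egp(G) + 1$ always (this is Lemma~\ref{lm-ng-upper} for $G\+$, noting $G\+$ contains a cycle) — and with the trivial bound $\egp(G) \le \ngp(G)$, we conclude $\ngp(G) = \egp(G)$, hence $\osp(G) = 0$.

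The one technical point to handle carefully is the hypothesis of Lemma~\ref{lm-ng-upper}, which requires the graph not to be a tree (equivalently, to contain a cycle): since $G\+$ has the edge $xy$ plus a path through $G$ between $x$ and $y$ — or, if $G$ itself is as small as a single vertex or edge, one still needs $\egp$ defined sensibly — one should note that whenever $\egp(G) \geq 1$ (which is implied by $\egp(G)$ odd, since odd numbers are positive) the graph $G\+$ is certainly non-planar and hence not a tree, so the lemma applies. I expect no real obstacle here; the only thing to be slightly careful about is stating the even-Euler-genus consequence of orientable simplicity for $G\+$ correctly, which is exactly the remark following Lemma~\ref{lm-ng-upper} in the excerpt, so it can be cited verbatim.
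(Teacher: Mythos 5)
Your proof is correct and follows exactly the route the paper intends: the paper states only that the lemma ``is an easy consequence of Lemma~\ref{lm-ng-upper},'' and your argument — that $\osp(G)\in\{0,1\}$ with $\osp(G)=1$ iff $G\+$ is orientably simple, in which case the remark after Lemma~\ref{lm-ng-upper} forces $\egp(G)$ to be even — is precisely that consequence.
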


Let us state the following theorem of Stahl and Beineke using our formalism.

\begin{theorem}[Stahl and Beineke~\cite{stahl-1977}]
\label{th-stahl-nonori}
  Let $G = G_1 \cup G_2$ be a $1$-sum of $G_1$ and $G_2$. Then
  $$\ng(G) = \eg(G_1) + \eg(G_2) + \os(G_1)\os(G_2).$$
  Moreover, $\os(G) = \os(G_1)\os(G_2)$.
\end{theorem}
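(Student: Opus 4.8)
The plan is to reduce everything to two elementary operations on embeddings — gluing two embedded parts together at the cut vertex, and the reverse of restricting an embedding of $G$ to a part — and then chase inequalities using Euler's formula and Lemma~\ref{lm-ng-upper}. Write $v$ for the cut vertex, so $V(G_1)\cap V(G_2)=\{v\}$. Theorem~\ref{th-stahl-euler} already gives $\eg(G)=\eg(G_1)+\eg(G_2)$, and recall that $\eg(H)\le\ng(H)\le\eg(H)+1$ whenever $H$ contains a cycle, so $\os$ is $0$--$1$ valued and $\os(H)=1$ exactly when $H$ is orientably simple, in which case $\eg(H)=2g(H)$. I assume that $G_1$ and $G_2$ each contain a cycle (the case needed in the applications, where the parts are $2$-connected); then $G$ contains a cycle and $\ng(G)$ is meaningful. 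The first tool is the \emph{vertex amalgamation}: from embeddings $\Pi_1$ of $G_1$ and $\Pi_2$ of $G_2$ one builds an embedding $\Pi$ of $G$ by cutting the local rotation at $v$ in each $\Pi_i$ along a corner of some face through $v$ and splicing the two cyclic orders, leaving all signatures untouched; counting with Euler's formula ($|V|$ drops by one, $|E|$ is the sum, and one $\Pi_1$-face and one $\Pi_2$-face merge into a single $\Pi$-face) gives $\eg(\Pi)=\eg(\Pi_1)+\eg(\Pi_2)$, and since every cycle of a $1$-sum lies inside $G_1$ or inside $G_2$ and keeps its signature, $\Pi$ is nonorientable if and only if $\Pi_1$ or $\Pi_2$ is. The second tool is the reverse operation: deleting from $\Pi$ the edges not in $G_i$ yields an embedding $\Pi_i$ of $G_i$, and a similar face count gives the subadditivity $\eg(\Pi_1)+\eg(\Pi_2)\le\eg(\Pi)$.

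Suppose first that $\os(G_1)\os(G_2)=0$, say $\os(G_1)=0$. Then $G_1$ has a nonorientable embedding of Euler genus $\eg(G_1)$; amalgamating it with an optimal embedding of $G_2$ produces a nonorientable embedding of $G$ of Euler genus $\eg(G_1)+\eg(G_2)=\eg(G)$, so $\ng(G)\le\eg(G)$. As always $\ng(G)\ge\eg(G)$, we get $\ng(G)=\eg(G)=\eg(G_1)+\eg(G_2)$, which is the asserted value since the product $\os(G_1)\os(G_2)$ vanishes, and $\os(G)=\ng(G)-\eg(G)=0=\os(G_1)\os(G_2)$.

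Now suppose $\os(G_1)=\os(G_2)=1$, so each $G_i$ is orientably simple with $\eg(G_i)=2g(G_i)$ and $\ng(G_i)=\eg(G_i)+1$. For the lower bound, let $\Pi$ be any nonorientable embedding of $G$; its nonorientable cycle lies in one part, say $G_1$, so $\Pi_1$ is a nonorientable embedding of $G_1$, whence $\eg(\Pi_1)\ge\ng(G_1)=\eg(G_1)+1$; combined with $\eg(\Pi_2)\ge\eg(G_2)$ and the subadditivity, $\eg(\Pi)\ge\eg(G_1)+\eg(G_2)+1=\eg(G)+1$, so $\ng(G)\ge\eg(G)+1$. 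For the matching upper bound, amalgamating minimum-genus orientable embeddings of $G_1$ and $G_2$ gives an orientable embedding of $G$ of Euler genus $2g(G_1)+2g(G_2)=\eg(G)$, so $2g(G)\le\eg(G)$ and hence $\eg(G)=2g(G)$; as $G$ contains a cycle, Lemma~\ref{lm-ng-upper} yields $\ng(G)\le 2g(G)+1=\eg(G)+1$. Therefore $\ng(G)=\eg(G)+1=\eg(G_1)+\eg(G_2)+\os(G_1)\os(G_2)$ and $\os(G)=1=\os(G_1)\os(G_2)$, completing the proof.

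The one place where real work is hidden is the two face-counting claims: that vertex amalgamation is realizable as an honest rotation system whose Euler genus is exactly the sum, and — more delicately — the subadditivity $\eg(\Pi_1)+\eg(\Pi_2)\le\eg(\Pi)$ under restriction at a cut vertex. The cleanest route is to split each $\Pi$-face according to how it meets $G_1$ and $G_2$ near $v$ and to keep track of the number of blocks into which the edge sets of $G_1$ and of $G_2$ split the local rotation at $v$, obtaining $\eg(\Pi)\ge\eg(\Pi_1)+\eg(\Pi_2)$ with equality when each set forms a single block; this is where all the topology sits, the rest being inequality bookkeeping with $\eg$, $\ng$ and $g$ together with Lemma~\ref{lm-ng-upper}.
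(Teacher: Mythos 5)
The paper offers no proof of this statement at all---it is quoted from Stahl and Beineke with a citation---so there is nothing to compare against except the literature; judged on its own, your argument is the standard one and is essentially correct. Both cases are handled properly: when some $\os(G_i)=0$ you realize $\ng(G)=\eg(G)$ by amalgamating a minimum nonorientable embedding of that part with a minimum embedding of the other; when $\os(G_1)=\os(G_2)=1$ the lower bound correctly uses that an odd cycle of a nonorientable embedding of $G$ lives in one part together with subadditivity of Euler genus under restriction, and the upper bound correctly combines $\eg(G)=2g(G)$ (from amalgamating orientable embeddings) with Lemma~\ref{lm-ng-upper}. Two remarks. First, your hypothesis that each $G_i$ contains a cycle is not merely a convenience: with the paper's convention $\ng(T)=0$ for a tree $T$ (hence $\os(T)=0$), the formula as literally stated fails when a part is a tree---e.g.\ $K_7$ with a pendant edge has $\ng=3$ while the right-hand side evaluates to $2$---so the theorem is implicitly restricted to parts with cycles exactly as you assume, and this is the form in which the paper applies it (to $2$-connected pieces in Lemma~\ref{lm-osp-0}). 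Second, the one ingredient you defer, the inequality $\eg(\Pi_1)+\eg(\Pi_2)\le\eg(\Pi)$ for the restrictions of an embedding $\Pi$ to the two parts, is precisely the hard half of Theorem~\ref{th-stahl-euler} (equivalently $|F(G,\Pi)|\le |F(G_1,\Pi_1)|+|F(G_2,\Pi_2)|-1$, proved by counting the blocks into which the $G_1$- and $G_2$-edges partition the local rotation at the cut vertex); you identify it correctly and your sketch is the right one, but a fully self-contained write-up would have to carry out that face count rather than gesture at it.
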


In order to describe how the nonorientable genus of a 2-sum of graphs can be computed from
the genera of its parts, let us introduce parameters $\nh_0$ and $\nh_1$ similar to $\eh_0$
and $\eh_1$.
Let $G$ be an $xy$-sum of connected graphs $G_1, G_2 \in \Gxy$.
Define
\begin{equation}
  \nh_0(G) = \eh_0(G) = \eg(G_1) + \eg(G_2) + 2\label{eq-ng-0}
\end{equation}
and
\begin{equation}
  \nh_1(G) = \egp(G_1) + \egp(G_2) + \osp(G_1)\osp(G_2).\label{eq-ng-1}
\end{equation}

Let $\nt = \ngp - \ng$.
We shall use the following theorem of Richter.

\begin{theorem}[Richter~\cite{richter-1987-nonori}]
  \label{th-richter-nonori}
  Let $G$ be an $xy$-sum of connected graphs $G_1, G_2 \in \Gxy$.
  Then
  \begin{enumerate}[label=\rm(\roman*)]
  \item
    $\ng(G) = \min\{\nh_0(G), \nh_1(G)\}$,
  \item
    $\ngp(G) = \nh_1(G)$,
  \item
    $\nt(G) = \max\{\nh_1(G) - \nh_0(G), 0\}$,
  \item
    $\osp(G) = \osp(G_1)\osp(G_2)$, and
  \item
    if $\osp(G) = 0$ or $\eeta(G_1, G_2) \ge 2$, then $\os(G) = 0$, else $\os(G) = 1$.
  \end{enumerate}
\end{theorem}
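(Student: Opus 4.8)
The plan is to reduce parts (i) and (iii) to (ii)/(iv) and (v), and then prove (iv) and (v) by analysing embeddings across the $2$-vertex-cut, in parallel with the proof of Richter's Euler-genus formula (Theorem~\ref{th-richter-euler}) but keeping track of orientability. For the reduction, note $\os,\osp\in\{0,1\}$ and $\ng(H)=\eg(H)+\os(H)$ for every graph $H$, since a minimum-genus orientable embedding can be made nonorientable at the price of one crosscap, so $\ng\le\eg+1$. Hence $\ng(G)=\eg(G)+\os(G)$ and $\ngp(G)=\egp(G)+\osp(G)$. Using $\egp(G)=\egp(G_1)+\egp(G_2)$ (Theorem~\ref{th-richter-euler}(ii) with~(\ref{eq-eg-1})), statement (ii) becomes equivalent to (iv). Granting (iv) and (v), part (i) is a short case check: if $\osp(G_1)\osp(G_2)=0$ then $\nh_1(G)=\eh_1(G)$ and $\os(G)=0$, so $\ng(G)=\eg(G)=\min\{\eh_0(G),\eh_1(G)\}=\min\{\nh_0(G),\nh_1(G)\}$ by Theorem~\ref{th-richter-euler}(i); if $\osp(G_1)\osp(G_2)=1$ then $\nh_1(G)=\eh_1(G)+1$, and one splits on whether $\eeta(G_1,G_2)\le1$ (then $\eh_1(G)<\eh_0(G)$ and, by (v), $\os(G)=1$, so both sides equal $\eh_1(G)+1$) or $\eeta(G_1,G_2)\ge2$ (then $\eh_1(G)\ge\eh_0(G)$ and, by (v), $\os(G)=0$, so both sides equal $\eh_0(G)$). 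Part (iii) is then immediate: $\nt(G)=\ngp(G)-\ng(G)=\nh_1(G)-\min\{\nh_0(G),\nh_1(G)\}=\max\{\nh_1(G)-\nh_0(G),0\}$.

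To prove (iv) I would use that $G\+$ is the amalgamation of $G_1\+$ and $G_2\+$ along the single shared edge $xy$ (connectedness of $G_1,G_2$ matters here: it forces $xy$ onto a cycle, so $xy$ is never a bridge). Amalgamation over a $K_2$ is ``clean'': Euler genus is additive (consistent with Theorem~\ref{th-richter-euler}(ii)) and any embedding $\Pi$ of $G\+$ restricts to embeddings $\Pi_i$ of $G_i\+$ with $\eg(\Pi_1)+\eg(\Pi_2)\le\eg(\Pi)$, while $\Pi$ is orientable if and only if both $\Pi_1$ and $\Pi_2$ are (after possibly reversing one side so the local orientations agree across $xy$). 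Taking $\Pi$ of minimum genus and combining $\egp(G_1)+\egp(G_2)=\eg(\Pi)\ge\eg(\Pi_1)+\eg(\Pi_2)\ge\egp(G_1)+\egp(G_2)$ forces $\eg(\Pi_i)=\egp(G_i)$; hence $\Pi$ is orientable iff each $G_i\+$ admits an orientable embedding of genus $\egp(G_i)$, i.e.\ iff $\osp(G_i)=1$ for both $i$ (for odd $\egp(G_i)$, where $\osp(G_i)=0$ anyway, invoke Lemma~\ref{lm-non-simple}). Thus $\osp(G)=\osp(G_1)\osp(G_2)$.

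For (v), the claim is that $\eg(G)$ is realised only by orientable embeddings exactly when $\osp(G_1)=\osp(G_2)=1$ and $\eeta(G_1,G_2)\le1$. If $\osp(G)=0$, a routine amalgamation construction (a nonorientable minimum-genus cofacial embedding of a part $G_i$ with $\osp(G_i)=0$, glued cofacially with a minimum-genus cofacial embedding of the other part, together with the always-available nonorientable ``$+2$'' amalgamation) produces a nonorientable embedding of $G$ of genus $\min\{\eh_0(G),\eh_1(G)\}=\eg(G)$, so $\os(G)=0$; likewise if $\eeta(G_1,G_2)\ge2$ then $\eg(G)=\eh_0(G)$ and the nonorientable ``$+2$'' amalgamation already realises it. The remaining direction is the crux: assume $\osp(G_1)=\osp(G_2)=1$ and $\eeta(G_1,G_2)\le1$, so $\eg(G)=\eh_1(G)<\eh_0(G)$, and let $\Pi$ be any embedding of $G$ with $\eg(\Pi)=\eg(G)$. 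The face-counting analysis shows $\Pi$ restricts to minimum-genus embeddings $\Pi_i$ of $G_i$ with $x,y$ on a common face (the ``$+2$'' alternative costs at least $\eh_0(G)>\eg(G)$ and is excluded); adding the chord $xy$ through the shared face gives a genus-$\egp(G_i)$ embedding of $G_i\+$, which is orientable because $G_i\+$ is orientably simple, so $\Pi_i$ is orientable, and hence so is the cofacial amalgamation $\Pi$. Therefore $\eg(G)$ cannot be realised nonorientably, i.e.\ $\os(G)=1$.

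The main obstacle is the structural/face-counting step that underlies both the lower bound in (i) and the last direction of (v): one must show that a minimum-Euler-genus embedding of the $xy$-sum decomposes over $\{x,y\}$ either as a cofacial amalgamation (genus $\eh_1$) or as a ``tube'' amalgamation (genus $\eh_0$) with nothing in between, and simultaneously control the signature so that orientability of the whole is governed by orientability of the two induced pieces. The degenerate configurations --- a part equal to a single edge, parts with cut vertices, or the face through $x$ coinciding with the face through $y$ after deleting $xy$, and the distinction between ``bridge-like'' and ``M\"obius-like'' singular behaviour of $xy$ --- are what make Richter's original argument technical, and the orientability bookkeeping adds one more layer on top of it.
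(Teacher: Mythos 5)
This theorem is not proved in the paper at all: it is quoted from Richter~\cite{richter-1987-nonori} as an external black box, so there is no in-paper argument to compare yours against, and what you are really attempting is a proof of Richter's theorem itself. Your reductions are correct and well organized: $\ng=\eg+\os$ and $\ngp=\egp+\osp$ follow from Lemma~\ref{lm-ng-upper}, so (ii) is equivalent to (iv) via Theorem~\ref{th-richter-euler}(ii) and Eq.~(\ref{eq-eg-1}), and your case analysis deriving (i) and (iii) from (iv), (v) and Theorem~\ref{th-richter-euler}(i) checks out. But all of the substance lies in (iv) and (v), and there you do not give a proof: the statement that a minimum-Euler-genus embedding of the $xy$-sum decomposes over $\{x,y\}$ either as a cofacial amalgamation realizing $\eh_1$ or as a ``tube'' amalgamation realizing $\eh_0$, with additive Euler genus of the restrictions and with the signature controlled by the two pieces, \emph{is} Richter's theorem (resting on Miller's edge-amalgamation additivity and Stahl--Beineke). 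Invoking it as ``clean,'' ``routine,'' or ``the face-counting analysis shows'' defers exactly the content to be proved; you flag this honestly as ``the main obstacle,'' but that leaves the theorem unproved.

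One step is moreover wrong as written, not merely deferred. In (v) you argue: the restrictions $\Pi_1,\Pi_2$ of a minimum-genus embedding $\Pi$ are orientable, ``and hence so is $\Pi$.'' For arbitrary embeddings this implication fails. A cycle of $G$ meeting both parts splits into an $x$--$y$ path in $G_1$ and one in $G_2$; since all cycles of $G_i$ have positive signature, all $x$--$y$ paths of $G_i$ share a common signature $\epsilon_i$, and the crossing cycle has signature $\epsilon_1\epsilon_2$, which can be $-1$ even though both restrictions are orientable. In (iv) the analogous transfer is legitimate precisely because the shared edge $xy$ anchors both signatures ($\epsilon_i=\lambda(xy)$, so $\epsilon_1\epsilon_2=1$); in (v) there is no shared edge, and orientability of $\Pi$ can only be recovered from the geometric form of the deferred decomposition lemma (that $\Pi$ is, up to surgery in the two faces, the disk-sum of $\Pi_1$ and $\Pi_2$), together with a treatment of the degenerate configurations you list (a part equal to the single edge $xy$, or $x$ and $y$ lying on the same face after the splitting). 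Until that lemma is proved with the orientability bookkeeping built in, parts (iv), (v), and hence (i)--(iii), remain unestablished.
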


The next lemma shows that the $xy$-sums of graphs with parts that are not orientably simple are
critical graphs for Euler genus if and only if they are obstructions for the corresponding nonorientable surface.

\begin{lemma}
  \label{lm-osp-0}
  Let $G$ be an $xy$-sum of connected graphs $G_1, G_2 \in \Gcxy$, $H \in \{\hat{G}, \hat{G\+}\}$, and $k \ge 0$.
  If $\osp(G_1) = \osp(G_2) = 0$, then $H \in \E_k$ if and only if $H \in Forb(\NN_k)$.
\end{lemma}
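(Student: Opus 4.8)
The key observation is that $\ng$ and $\eg$ differ only through the parameter $\os$, and when both parts have $\osp(G_1)=\osp(G_2)=0$, the composite $xy$-sum also has $\osp(G)=0$ by Theorem~\ref{th-richter-nonori}(iv), and hence $\os(G)=0$ by Theorem~\ref{th-richter-nonori}(v). I want to argue that under this hypothesis the graph parameters $\eg$ and $\ng$ coincide at every minor of $G$ that we care about, so that ``critical for Euler genus $k$'' and ``obstruction for $\NN_k$'' become literally the same condition. The plan is to show that for the relevant host graph $H$ (either $\hat G$ or $\hat{G\+}$) and all minors $H'$ of $H$ obtained by minor-operations, one has $\ng(H')=\eg(H')$, and then to read off the equivalence $H\in\E_k \iff H\in\Forb(\NN_k)$ from the definitions.

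\textbf{Key steps.} First I would record that $\osp$ is minor-monotone in the following sense: deleting or contracting an edge of $G_i$ cannot turn a non-orientably-simple graph into an orientably-simple one while keeping it connected --- more precisely, $\osp(\mu G_i)=0$ whenever $\osp(G_i)=0$ and $\mu G_i$ is connected (this needs a short argument, perhaps via Lemma~\ref{lm-non-simple} together with monotonicity of $\egp$, or directly: an orientably simple graph has even Euler genus and $\ngp=\egp+1$, and a minor with smaller Euler genus parity-flips, etc.). Second, using Theorem~\ref{th-richter-nonori}(iv)--(v) I conclude $\os(\mu G)=\osp(\mu G)=0$ for every minor-operation $\mu$ applied to $G$ (with the usual caveat about cutedges, which is handled by Lemma~\ref{lm-no-cutedges}), and likewise that $\os(H')=0$ for all the minors $H'$ we encounter. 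Third, from $\os\equiv 0$ on these graphs we get $\ng=\eg$ and $\ngp=\egp$ on all of them; since $H\in\{\hat G,\hat{G\+}\}$ and $\eg(H)=\ng(H)=k+1$ while $\eg(H')\le k$ (equiv.\ $\ng(H')\le k$) for every proper minor obtained by a single minor-operation, the two criticality notions agree. Finally, ``$H$ embeds in $\SS$ with $\eg(\SS)=k$'' is equivalent to $\eg(H)\le k$, and ``$H$ embeds in $\NN_k$'' is equivalent to $\ng(H)\le k$ (for $H$ not a forest; the forest case is trivial), so $H\in\E_k\iff H\in\Forb(\NN_k)$.

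\textbf{Main obstacle.} The delicate point is the bookkeeping about which minors actually have $\os=0$: a single minor-operation $\mu$ applied to $H=\hat G$ or $H=\hat{G\+}$ need not respect the $xy$-sum decomposition (it could contract the edge $xy$, or delete it, or act across the cut), so Theorem~\ref{th-richter-nonori} does not directly apply to $\mu H$. I expect to handle this by noting that $\mu H$ is in any case a minor of $H$, that $\eg$ and $\ng$ are both minor-monotone, and that what we really need is only $\ng(\mu H)=\eg(\mu H)$ for those $\mu$ that are \emph{genus-decreasing}; for those, $\eg(\mu H)\le k$, and I must still rule out $\mu H$ being orientably simple with $\ng(\mu H)=\eg(\mu H)+1>k$. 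This is where I would invoke that $\mu H$ contains a non-orientably-simple minor of the same Euler genus (e.g.\ inherited from one of the $G_i$, which is non-orientably-simple after possible simplification), forcing $\ng(\mu H)=\eg(\mu H)$. Getting this inheritance argument clean --- in particular verifying that $\mu H$ really does retain a witnessing subgraph forcing non-orientable-simplicity at the right genus level --- is the part that needs care, and is likely where the author's proof spends its effort; everything else is a direct translation through the definitions of $\E_k$ and $\Forb(\NN_k)$.
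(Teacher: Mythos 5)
Your plan correctly identifies the main idea (reduce to $\ng=\eg$ everywhere relevant so that the two criticality notions coincide), but two of your key steps have genuine problems.

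First, the claimed monotonicity of $\osp$ is false and, fortunately, not needed. It is not true that $\osp(G_i)=0$ forces $\osp(\mu G_i)=0$: e.g.\ an orientably simple graph such as $K_7$ (where $\ng=2g+1$) can sit as a minor inside a non-orientably-simple one such as $K_8$, so passing to a minor can turn $\osp$ from $0$ into $1$. The paper sidesteps this entirely: when $\mu\in\M(G_1)$, the graph $\mu G$ is still an $xy$-sum whose second part is the \emph{unchanged} $G_2$, and by Theorem~\ref{th-richter-nonori}(iv) one has $\osp(\mu G)=\osp(\mu G_1)\cdot\osp(G_2)=\osp(\mu G_1)\cdot 0=0$ regardless of what happens to $\osp(\mu G_1)$. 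This is the trick you were missing; with it, $\os(\mu H)=0$ for all $\mu$ acting inside a single part, giving $\ng(\mu H)=\eg(\mu H)\le k$ immediately.

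Second, your proposed fix for the one genuinely awkward minor-operation — contracting $xy$ when $H=\hat{G\+}$ — does not work. You want to deduce $\os(H/xy)=0$ from the presence of a non-orientably-simple minor of the same Euler genus. But $\os$ is not determined by such a minor: an orientably simple graph can perfectly well contain a non-orientably-simple minor of equal Euler genus, since both $\eg$ and $\ng$ only decrease under minors. The paper's actual argument is different in kind: it writes $H/xy$ as a $1$-sum of $G_1/xy$ and $G_2/xy$ and applies Theorem~\ref{th-stahl-nonori}. In the only dangerous case $\os(G_1/xy)=\os(G_2/xy)=1$, it does \emph{not} try to show $\os(H/xy)=0$; instead it bounds $\ng(H/xy)$ directly by observing that $\os(G_i/xy)=1$ together with $\osp(G_i)=0$ forces the strict drop $\eg(G_i/xy)<\egp(G_i)$ (via $\eg(G_i/xy)=\ng(G_i/xy)-1<\ng(G_i^+)=\egp(G_i)$), so that $\ng(H/xy)\le\eg(G_1/xy)+\eg(G_2/xy)+1<\egp(G_1)+\egp(G_2)=\ng(H)$. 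Your ``inheritance'' step cannot be repaired into this; what is needed is the quantitative genus-drop inequality, not a qualitative claim about $\os$. Also note the forward direction ($\Forb(\NN_k)\Rightarrow\E_k$) needs no control of $\os$ on minors at all — only $\os(H)=0$ and the trivial $\eg\le\ng$ — so you are overworking that half.
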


\begin{proof}
  By Theorem~\ref{th-richter-nonori}(iv) and (v), $\os(G) = \osp(G) = \osp(G_1)\osp(G_2) = 0$.
  Therefore, $\os(H) = 0$.

  Assume first that $H \in \Forb(\NN_k)$.
  We have that $\eg(H) = \ng(H) > k$.
  Let $\mu \in \M(H)$. Since $\eg(\mu H) \le \ng(\mu H) \le k$ and $\mu$ is arbitrary, we have that $H \in \E_k$.

  Assume now that $H \in \E_k$.
  By Lemmas~\ref{lm-cc-eg} and~\ref{lm-cc-egp-iff}, $G \in \Cc(\eg) \cup \Cc(\egp)$.
  We have that $\ng(H) = \eg(H) > k$.
  Let $\mu \in \M(G_1)$. By Lemma~\ref{lm-no-cutedges}, $\mu G_1$ is connected.
  By Theorem~\ref{th-richter-nonori}(iv) and (v), $\os(\mu G) = \osp(\mu G) = \osp(\mu G_1)\osp(G_2) = 0$.
  Therefore, $\os(\mu H) = 0$.
  Hence $\ng(\mu H) = \eg(\mu H) \le k$.
  Similarly $\ng(\mu H) \le k$ for $\mu \in \M(G_2)$.
  This shows that $H \in \Forb(\NN_k)$ if $H = \hat{G}$.
  Assume then that $H = \hat{G\+}$. It remains to see that after deleting or contracting
  the edge $xy$, the graph can be embedded in $\NN_k$.
  Since $\os(H - xy) = \os(G) = 0$, we have that $\ng(H - xy) = \eg(H - xy) \le k$.
  By Theorem~\ref{th-stahl-nonori},
  $\os(H / xy) = \os(G_1/xy)\os(G_2/xy)$.
  If $\os(H /xy) = 0$, then
  $\ng(H /xy) = \eg(H /xy) \le k$.
  So, we are done unless $\os(G_1/xy) = \os(G_2/xy) = 1$, which we assume henceforth.
  Since $G_1/xy$ is a minor of $\hat{G_1^+}$, we have that
  $$\eg(G_1/xy) = \ng(G_1/xy) - \os(G_1/xy) < \ng(G_1/xy) \le \ng(G_1^+) = \ngp(G_1) = \egp(G_1).$$
  Similarly, we derive that $\eg(G_2/xy) < \egp(G_2)$.
  By Theorems~\ref{th-richter-euler} and~\ref{th-stahl-nonori},
  \begin{align*}
    \ng(H /xy) &\le \eg(G_1/xy) + \eg(G_2/xy) + 1 < \egp(G_1) + \egp(G_2) \\
    &= \egp(G) = \eg(H) = \ng(H) \le k.
  \end{align*}
  We conclude that $H \in \Forb(\NN_k)$.
\end{proof}

A corollary of Theorem~\ref{th-e-2} and Lemma~\ref{lm-osp-0} asserts that the class of obstructions for the Klein bottle having connectivity 2 and the class of critical graphs for Euler genus 2 of connectivity 2 are the same.
We can say even more:

\begin{corollary}
  \label{cr-klein-bottle}
  Let $H$ be a graph of connectivity 2. Then $H \in \E_2$ if and only if $H \in \Forb(\NN_2)$.
\end{corollary}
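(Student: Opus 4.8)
The plan is to deduce Corollary~\ref{cr-klein-bottle} from Theorem~\ref{th-e-2} together with Lemma~\ref{lm-osp-0}. One direction is immediate and does not even use the connectivity hypothesis: if $H \in \Forb(\NN_2)$, then $\eg(H) \le \ng(H)$ gives $\eg(H) \le \ng(H)$, but we actually need the reverse — so let me be careful. For a graph $H$ with $\ng(H) > 2$ we have $\eg(H) \ge \ng(H) - 1 \ge 2$; combined with the fact that every proper minor embeds in $\NN_2$ (hence has Euler genus at most $2$), this only shows $H \in \E_2$ when $\eg(H) = 3$, i.e.\ when $H$ is not orientably simple with $\eg(H)=2$. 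So even the ``easy'' direction requires ruling out that $H \in \Forb(\NN_2)$ has $\eg(H) = 2$ and $\ng(H) = 3$ (orientably simple). This is where the structural description enters: by Theorem~\ref{th-e-2} (applied via $\Cc(\eg)$ and $\Cc(\egp)$), a connectivity-2 member of $\E_2$ — or a connectivity-2 graph all of whose proper minors embed in $\NN_2$ — decomposes as an $xy$-sum of a part $G_1 \in \Cc_0(\egp)$ and a part $G_2 \in \Cc_1(\egp) \cup \S_1$.

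First I would establish that for every $G_1 \in \Cc_0(\egp)$ and every $G_2 \in \Cc_1(\egp) \cup \S_1$, the completions $G_1^+$ and $G_2^+$ are not orientably simple, equivalently $\osp(G_1) = \osp(G_2) = 0$. For $G_1$: $\egp(G_1) = 1$ is odd, so $\osp(G_1) = 0$ by Lemma~\ref{lm-non-simple}. For $G_2$: $\egp(G_2) = 2$, so I must argue directly that $G_2^+$ admits a nonorientable embedding in the Klein bottle. Here Lemma~\ref{lm-all-in-klein} already tells us $G_2^+$ embeds in the Klein bottle; I would strengthen the supporting computation (or cite the online embeddings) to note that these embeddings are nonorientable, so $\ngp(G_2) = 2 = \egp(G_2)$ and $\osp(G_2) = 0$. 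Actually a cleaner route: $\egp(G_2) = 2$ means $\eg(G_2^+) = 2$; if $G_2^+$ were orientably simple then $\ngp(G_2) = 3$ and $g(G_2) = 1$, so $G_2^+$ would be a toroidal graph of orientable genus one; but $G_2^+$ is a proper supergraph-completion whose structure (being close to a $\Forb(\NN_1)$ graph plus an edge, or a cascade) can be checked not to be orientably simple. The most robust argument is simply the computational one backing Lemma~\ref{lm-all-in-klein}: verify each of the finitely many $G_2^+$ has a nonorientable Klein bottle embedding.

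Then I would invoke Lemma~\ref{lm-osp-0} with $k = 2$: since $\osp(G_1) = \osp(G_2) = 0$, for $H \in \{\hat{G}, \hat{G^+}\}$ we get $H \in \E_2$ iff $H \in \Forb(\NN_2)$. Now for the forward direction of the corollary, let $H$ have connectivity $2$ and $H \in \E_2$; by Theorem~\ref{th-e-2} it is exactly one of $\hat{G}, \hat{G^+}$ for such an $xy$-sum $G$, so Lemma~\ref{lm-osp-0} gives $H \in \Forb(\NN_2)$. For the reverse direction, let $H$ have connectivity $2$ and $H \in \Forb(\NN_2)$. Every proper minor of $H$ embeds in $\NN_2$, hence has Euler genus $\le 2$, and $\eg(H) \ge \ng(H) - 1 \ge 2$; I need $\eg(H) \ge 3$. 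If $\eg(H) = 2$, then $H$ is minor-minimal with Euler genus $\ge 3$ fails, but I can instead argue: were $\eg(H)=2$, then since every proper minor has Euler genus $\le 2$ as well, $H$ would not be $\eg$-critical for genus $2$; however $H$ being minimal for $\NN_2$ means each proper minor has $\ng \le 2$ hence $\eg \le 2$, and by the $\os$-analysis every such minor and $H$ itself would have to be consistently (non)orientably-simple — this is precisely the content I extract from Theorem~\ref{th-e-2}'s decomposition, which forces $\eeta(G_1,G_2)$ into $\{0,\dots,3\}$ and hence $\eg$ or $\eg^+$ of the sum equal to $3$. So $\eg(H)=3$, giving $H \in \E_2$.

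The main obstacle I expect is the orientability bookkeeping in the reverse direction: cleanly ruling out that a connectivity-2 obstruction for $\NN_2$ could be orientably simple with Euler genus exactly $2$. The clean way is to note that any connectivity-2 graph $H$ with all proper minors in $\Forb(\NN_2)^{\downarrow}$ still decomposes as an $xy$-sum as in Theorem~\ref{th-e-2} (the proof of that theorem's converse only uses that $H$ is $\eg$-critical or $\egp$-critical after the relevant completion, which follows from minor-minimality for $\NN_2$ via Lemma~\ref{lm-osp-0} applied to proper minors), whence both parts have $\osp = 0$, whence by Theorem~\ref{th-richter-nonori}(v) $\os(H) = 0$, so $H$ is not orientably simple and $\eg(H) = \ng(H) - \text{(0 or more)}$; combined with $\ng(H) = 3$ we must check $\eg(H) = 3$ rather than $2$, but $\os(H)=0$ forces $\ng(H) = \eg(H)$, giving $\eg(H) = 3$ and $H \in \E_2$ directly. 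Assembling these pieces, both implications reduce to Lemma~\ref{lm-osp-0} once the $\osp = 0$ property of the building blocks is in hand.
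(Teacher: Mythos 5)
Your forward direction is essentially the paper's argument: Theorem~\ref{th-e-2} supplies the decomposition, Lemma~\ref{lm-non-simple} gives $\osp(G_1)=0$ because $\egp(G_1)=1$ is odd, Lemma~\ref{lm-all-in-klein} gives $\osp(G_2)=0$ (embeddability of $G_2^+$ in the Klein bottle already yields $\ngp(G_2)\le 2=\egp(G_2)$, so your worry about having to re-verify that the computed embeddings are nonorientable is unnecessary), and Lemma~\ref{lm-osp-0} with $k=2$ finishes.

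The reverse direction as you set it up is circular. You propose to obtain the $xy$-sum decomposition of $H\in\Forb(\NN_2)$ from the converse half of Theorem~\ref{th-e-2}, and only then to read off $\osp(G_1)=\osp(G_2)=0$, $\os(H)=0$, and $\eg(H)=\ng(H)=3$. But the converse of Theorem~\ref{th-e-2} is a statement about graphs already known to lie in $\E_2$ (its proof runs through $G\in\Cc(\eg)$ or $G\in\Cc(\egp)$, i.e.\ through $\eg$- or $\egp$-criticality), and membership in $\E_2$ is exactly what you are trying to prove; your parenthetical patch --- that the needed criticality ``follows from minor-minimality for $\NN_2$ via Lemma~\ref{lm-osp-0} applied to proper minors'' --- does not break the circle, since Lemma~\ref{lm-osp-0} can only be invoked for $H$ itself once $\osp(G_1)=\osp(G_2)=0$ is known for its parts. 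The paper avoids Theorem~\ref{th-e-2} entirely here: it takes an \emph{arbitrary} 2-vertex-cut decomposition of $H$ and derives $\osp(G_i)=0$ directly from $H\in\Forb(\NN_2)$. Concretely, if $\osp(G_1)=1$, then $G_1^+$ is a proper minor of $H$, hence embeds in $\NN_2$, hence $\egp(G_1)=\ngp(G_1)-1\le 1$; the case $\egp(G_1)=1$ contradicts Lemma~\ref{lm-non-simple}, and the case $\egp(G_1)=0$ is impossible because then $\ngp(G)\le\ngp(G_2)\le 2$ by Theorem~\ref{th-richter-nonori}, making $H$ embeddable in $\NN_2$. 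With $\osp(G_1)=\osp(G_2)=0$ in hand, Lemma~\ref{lm-osp-0} gives $H\in\E_2$ at once. Replacing your circular step by this direct argument repairs the proof; the rest of your outline is sound.
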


\begin{proof}
  Assume first that $H \in \E_2$.
  By Theorem~\ref{th-e-2}, there is an $xy$-sum $G$ of graphs $G_1 \in \Cc_0(\egp)$ and $G_2 \in \Cc_1(\egp) \cup \S_1$
  such that $H \in \{\hat{G}, \hat{G\+}\}$.
  By Lemma~\ref{lm-non-simple}, $\osp(G_1) = 0$.
  By Lemma~\ref{lm-all-in-klein}, $\osp(G_2) = 0$.
  By Lemma~\ref{lm-osp-0}, $H \in \Forb(\NN_2)$.

  Assume now that $H \in \Forb(\NN_2)$.
  Let $G$ be an $xy$-sum of connected graphs $G_1, G_2 \in \Gcxy$ such that $H \in \{\hat{G}, \hat{G\+}\}$.
  Suppose that $\osp(G_1) = 1$.
  If $\egp(G_1) \ge 2$, then $\ngp(G_1) \ge 3$ and thus $G_1^+$ does not embed into $\NN_2$.
  This yields a contradiction as $H$ has $G_1^+$ as a proper minor.
  Since $\egp(G_1) > 0$, we conclude that $\egp(G_1) = 1$.
  By Lemma~\ref{lm-non-simple}, $\osp(G_1) = 0$, a contradiction.
  Therefore by symmetry, $\osp(G_1) = \osp(G_2) = 0$.
  By Lemma~\ref{lm-osp-0}, $H \in \E_2$.
\end{proof}

\bibliographystyle{abbrv}
\bibliography{bibliography}

\end{document}